\documentclass[oneside, 11pt]{amsart} 

\usepackage{amsfonts}
\usepackage{amssymb}
\usepackage{latexsym}
\usepackage{graphics}
\usepackage[2emode]{psfrag}
\usepackage{amsthm}
\usepackage{amsmath}
\usepackage[all]{xy}
                
\addtolength{\hoffset}{-1.5cm}
\addtolength{\textwidth}{3cm}
\addtolength{\voffset}{-0.7cm}
\addtolength{\textheight}{3cm}

\def\*  C{{*  \mathcal C}}

\newcommand{\Ker}{{\rm Ker}\,}

\newcommand{\Img}{{\rm Im}\,}

\newcommand{\Cc}{\mathcal{C}}

\def\*  C{{}*  \hspace*  {-1pt}{\Cc}}

\def\text#1{{\rm {\rm #1}}}

\newtheorem{prop}{Proposition}[section] 
\newtheorem{lemma}[prop]{Lemma}
\newtheorem{cor}[prop]{Corollary}
\newtheorem{theo}[prop]{Theorem}

\theoremstyle{definition}
\newtheorem{Def}[prop]{Definition}
\newtheorem{ex}[prop]{Example}
\newtheorem{exs}[prop]{Examples}

\newtheorem{rems}[prop]{Remarks}
\newtheorem{rem}[prop]{Remark}

\newcommand{\Title}[1]{\bigskip\bigskip\centerline{\bf #1}\bigskip}
\newcommand{\Author}[1]{\medskip\centerline{ \it #1}}

\newcommand{\Affiliation}[1]{\medskip\centerline{#1}}
\newcommand{\Email}[1]{\medskip\centerline{#1}\bigskip}

\begin{document}  


\Title{COMMUTATIVE DEDUCTIVE SYSTEMS OF PSEUDO BCK-ALGEBRAS} 
\title[Commutative deductive systems of pseudo BCK-algebras]{}
                                                                           
\Author{\textbf{LAVINIA CORINA CIUNGU}}
\Affiliation{Department of Mathematics} 
\Affiliation{University of Iowa}
\Affiliation{14 MacLean Hall, Iowa City, Iowa 52242-1419, USA}
\Email{lavinia-ciungu@uiowa.edu}

\begin{abstract} 
In this paper we generalize the axiom systems given by M. Pa\l asi\'nski, B. Wo\'zniakowska and by W.H. Cornish 
for commutative BCK-algebras to the case of commutative pseudo BCK-algebras. 
A characterization of commutative pseudo BCK-algebras is also given.
We define the commutative deductive systems of pseudo BCK-algebras and we generalize some results proved 
by Yisheng Huang for commutative ideals of BCI-algebras to the case of commutative deductive systems of pseudo BCK-algebras. We prove that a pseudo BCK-algebra $A$ is commutative if and only if all the deductive systems of $A$ 
are commutative. We show that a normal deductive system $H$ of a pseudo BCK-algebra $A$ is commutative if and 
only if $A/H$ is a commutative pseudo BCK-algebra. We introduce the notions of state operators and state-morphism operators on pseudo BCK-algebras, and we apply these results on commutative deductive systems to investigate the properties of these operators. \\

\textbf{Keywords:} {Pseudo BCK-algebra, commutative deductive system, state operator, 
state-morphism operator, state pseudo BCK-algebra}\\
\textbf{AMS classification (2000):} 03G25, 06F05, 06F35
\end{abstract}

\maketitle

\section{Introduction}

BCK-algebras were introduced by Y. Imai and K. Is\`eki in 1966 (\cite{Imai},\cite{Ise1}) as algebras with a 
binary operation $*$ modeling the set-theoretical difference and with a constant element $0$ that is a least element. 
Another motivation is from classical and non-classical propositional calculi modeling logical implications. 
Such algebras contain as a special subfamily the family of MV-algebras where important fuzzy structures can be studied. 
In 1975, S. Tanaka defined a special class of BCK-algebras called commutative BCK-algebras (\cite{Tan1}).  
H. Yutani was the first to give an equational base for commutative BCK-algebras (\cite{Yut1},\cite{Yut2}).  
An axiom system for commutative BCK-algebras consisting of three identities was given by M. Pa\l asi\'nski 
and B. Wo\'zniakowska in \cite{Pal1}, while W.H. Cornish gave in \cite{Cor1} an axiom system for commutative BCK-algebras consisting of two identities. 
For more details about BCK-algebras, see \cite{DvPu}, \cite{Meng1}. 
Commutative ideals in BCK-algebras were introduced in \cite{Meng2}, and they were generalized in \cite{Meng3} and 
\cite{Rez2} for the case of BCI-algebras and BE-algebras, respectively. 
This class of ideals proved to play an important role in the study of state BCK-algebras and state-morphism 
BCK-algebras (see \cite{Bor1}). 
Pseudo BCK-algebras were introduced by G. Georgescu and A. Iorgulescu in \cite{Geo15} as algebras 
with "two differences", a left- and right-difference, instead of one $*$ and with a constant element $0$ as the least element. Nowadays pseudo BCK-algebras are used in a dual form, with two implications, $\to$ and $\rightsquigarrow$ and with one constant element $1$, that is the greatest element. Thus such pseudo BCK-algebras are in the "negative cone" and are also called "left-ones". 
Commutative pseudo BCK-algebras were originally defined by G. Georgescu and A. Iorgulescu in \cite{Geo15} 
under the name of \emph{semilattice-ordered pseudo BCK-algebras} and properties of these structures were investigated 
by J. K{\"u}hr in \cite{Kuhr6}, \cite{Kuhr2}. 
An axiom system for bounded commutative pseudo BCK-algebras was presented by Walendziak in \cite{Wal1}.
Introduced by W. Dudek and Y.B. Jun (\cite{Dud1}), pseudo BCI-algebras are pseudo BCK-algebras having no greatest element.
According to \cite{Zhang1}, commutative pseudo BCI-algebras and commutative pseudo BCI-filters were defined 
and studied in \cite{Lu1}. 
Pseudo BE-algebras were introduced in \cite{Bor2} as generalizations of BE-algebras, and the commutative pseudo BE-algebras have recently been investigated in \cite{Ciu31}. 
In 1995, D. Mundici introduced an analogue of the probability measure on MV-algebras, called a \emph{state} (\cite{Mun1}), as the averaging process for formulas in \L ukasiewicz logic. After that, the states on other many-valued logic algebras have been intensively studied.  
Flaminio and Montagna were the first to present a unified approach to states and probabilistic many-valued logic in a 
logical and algebraic setting (\cite{Fla1}). They added a unary operation, called \emph{internal state} or 
\emph{state operator} to the language of MV-algebras which preserves the usual properties of states. 
A more powerful type of logic can be given by algebraic structures with internal states, and they are also very interesting varieties of universal algebras.
Di Nola and Dvure\v censkij introduced the notion of a state-morphism MV-algebra which is a stronger variation of 
a state MV-algebra (\cite{DiDv1}, \cite{DiDv1-e}).
The notion of a state operator was extended by Rach{\accent23u}nek and $\rm \check{S}alounov\acute{a}$ in \cite{Rac1} for the case of GMV-algebras (pseudo MV-algebras). State operators and state-morphism operators on BL-algebras were introduced and investigated in \cite{Ciu3} and subdirectly irreducible state-morphism BL-algebras were studied in \cite{Dvu5}. 
Dvure\v censkij, Rach{\accent23u}nek and $\rm \check{S}alounov\acute{a}$ introduced state R$\ell$-monoids and state-morphism R$\ell$-monoids (\cite{DvRa4},\cite{DvRa5}), while the state operators on pseudo BL-algebras and 
bounded pseudo-hoops have been investigated in \cite{Con1} and in \cite{Ciu4},\cite{Ciu30}, respectively.  
Recently, the state BCK-algebras and state-morphism BCK-algebras were defined and studied in \cite{Bor1}. \\
In this paper we generalize to the case of commutative pseudo BCK-algebras the axiom systems given by 
M. Pa\l asi\'nski and B. Wo\'zniakowska in \cite{Pal1} and by W.H. Cornish in \cite{Cor1} for commutative BCK-algebras. A characterization of commutative pseudo BCK-algebras is also given.
We define the commutative deductive systems of pseudo BCK-algebras and we investigate their properties by 
generalizing some results proved in \cite{Huang1} for the case of commutative ideals of BCI-algebras. 
We prove that a pseudo BCK-algebra $A$ is commutative if and only if all the deductive systems of $A$ are commutative. 
We show that a normal deductive system $H$ of a pseudo BCK-algebra $A$ is commutative if and only if $A/H$ is a commutative pseudo BCK-algebra. 
Inspired from \cite{Bor1}, we introduce the notions of state operators of type I and type II and state-morphism operators on pseudo BCK-algebras, and we apply the above mentioned results to investigate the properties of these operators. We show that the kernel of a type II state operator is a commutative deductive system. 
We define the notion of a normal state operator, proving that any type I or type II state operator on a commutative pseudo BCK-algebra is normal. 
It is proved that any normal type II state operator is a normal type I state operator, while a normal type I state 
operator is a normal type II state operator if its kernel is a commutative deductive system. 
In the last section we introduce and study the notion of a state-morphism operator on a pseudo BCK-algebra, showing 
that any normal type II state operator on a linearly ordered pseudo BCK-algebra is a state-morphism. 
For the case of a linearly ordered commutative pseudo BCK-algebra, we prove that any type I state operator 
is also a state-morphism operator.

$\vspace*{5mm}$

\section{Preliminaries on pseudo BCK-algebras}

A \emph{pseudo BCK-algebra} (more precisely, \emph{reversed left-pseudo BCK-algebra}) is a structure 
${\mathcal A}=(A,\leq,\rightarrow,\rightsquigarrow,1)$ where $\leq$ is a binary relation on $A$, $\rightarrow$ and $\rightsquigarrow$ are binary operations on $A$ and $1$ is an element of $A$ satisfying, for 
all $x, y, z \in A$, the  axioms:\\
$(psBCK_1)$ $x \rightarrow y \leq (y \rightarrow z) \rightsquigarrow (x \rightarrow z)$, $\:\:\:$ 
            $x \rightsquigarrow y \leq (y \rightsquigarrow z) \rightarrow (x \rightsquigarrow z);$ \\ 
$(psBCK_2)$ $x \leq (x \rightarrow y) \rightsquigarrow y$,$\:\:\:$ $x \leq (x \rightsquigarrow y) \rightarrow y;$ \\
$(psBCK_3)$ $x \leq x;$ \\
$(psBCK_4)$ $x \leq 1;$ \\
$(psBCK_5)$ if $x \leq y$ and $y \leq x$ then $x = y;$ \\
$(psBCK_6)$ $x \leq y$ iff $x \rightarrow y = 1$ iff $x \rightsquigarrow y = 1$. \\

Since the partial order $\leq$ is determined by any of the two ``arrows", we can eliminate ``$\leq$" from 
the signature and denote a pseudo BCK-algebra by ${\mathcal A}=(A,\rightarrow,\rightsquigarrow,1)$. \\
An equivalent definition of a pseudo BCK-algebra is given in \cite{Kuhr6}. \\
The structure ${\mathcal A}=(A,\rightarrow,\rightsquigarrow,1)$ of the type $(2,2,0)$ is a pseudo BCK-algebra iff it satisfies the following identities and quasi-identity, for all $x, y, z \in A$:\\ 
$(psBCK_1')$ $(x \rightarrow y) \rightsquigarrow [(y \rightarrow z) \rightsquigarrow (x \rightarrow z)]=1;$ \\
$(psBCK_2')$ $(x \rightsquigarrow y) \rightarrow [(y \rightsquigarrow z) \rightarrow (x \rightsquigarrow z)]=1;$ \\
$(psBCK_3')$ $1 \rightarrow x = x;$ \\
$(psBCK_4')$ $1 \rightsquigarrow x = x;$ \\
$(psBCK_5')$ $x \rightarrow 1 = 1;$ \\
$(psBCK_6')$ $(x\rightarrow y =1$ and $y \rightarrow x=1)$ implies $x=y$. \\
The partial order $\le$ is defined by $x \le y$ iff $x \rightarrow y =1$ (iff $x \rightsquigarrow y =1$). \\ 
If the poset $(A, \le)$ is a meet-semilattice then ${\mathcal A}$ is called a \emph{pseudo BCK-meet-semilattice} 
and we denote it by $\mathcal{A}=(A, \wedge, \rightarrow, \rightsquigarrow, 1)$.       
If $(A,\leq)$ is a lattice then we will say that ${\mathcal A}$ is a \emph{pseudo BCK-lattice} and 
it is denoted by $\mathcal{A}=(A, \wedge, \vee, \rightarrow, \rightsquigarrow, 1)$. \\

A pseudo BCK-algebra $\mathcal{A}=(A,\rightarrow,\rightsquigarrow,1)$ with a constant $a\in A$ (which 
can denote any element) is called a \emph{pointed pseudo BCK-algebra}.\\
A pointed pseudo BCK-algebra is denoted by $\mathcal{A}=(A,\rightarrow,\rightsquigarrow,a,1)$. 

A pseudo BCK-algebra $A$ is called \emph{bounded} if there exists an element $0\in A$ such that $0\le x$ 
for all $x\in A$. In a bounded pseudo BCK-algebra $(A, \rightarrow, \rightsquigarrow, 0, 1)$ we can define 
two negations: $x^{\rightarrow_0}=x\rightarrow 0$ and $x^{\rightsquigarrow_0}=x\rightsquigarrow 0$. 
A bounded pseudo BCK-algebra $A$ is called \emph{good} if it satisfies the identity 
$x^{\rightarrow_0\rightsquigarrow_0}=x^{\rightsquigarrow_0\rightarrow_0}$ for all $x\in A$. \\ 
An algebra $(A,\le,\rightarrow,\rightsquigarrow,1)$ satisfying $(psBCK_1)$, $(psBCK_2)$, $(psBCK_3)$, $(psBCK_5)$, 
$(psBCK_6)$ (or equivalently $(psBCK_1')$, $(psBCK_2')$, $(psBCK_3')$, $(psBCK_4')$, $(psBCK_6')$) is called a 
\emph{pseudo BCI-algebra} (for details, see \cite{Dud1}).  

\begin{lemma} \label{ps-eq-40} $\rm($\cite{Geo15}$\rm)$ In any pseudo BCK-algebra 
$(A,\rightarrow,\rightsquigarrow,1)$ the following hold for all $x, y, z\in A$: \\
$(1)$ $x\le y$ implies $z\rightarrow x \le z\rightarrow y$ and $z\rightsquigarrow x \le z\rightsquigarrow y;$ \\
$(2)$ $x\le y$ implies $y\rightarrow z \le x\rightarrow z$ and $y\rightsquigarrow z \le x\rightsquigarrow z;$ \\
$(3)$ $x\rightarrow y \le (z\rightarrow x)\rightarrow (z\rightarrow y)$ and 
      $x\rightsquigarrow y \le (z\rightsquigarrow x)\rightsquigarrow (z\rightsquigarrow y);$ \\
$(4)$ $x\rightarrow (y\rightsquigarrow z)=y\rightsquigarrow (x\rightarrow z)$ and 
      $x\rightsquigarrow (y\rightarrow z)=y\rightarrow (x\rightsquigarrow z);$ \\
$(5)$ $x\le y\rightarrow x$ and $x\le y\rightsquigarrow x;$ \\
$(6)$ $((x\rightarrow y)\rightsquigarrow y)\rightarrow y=x\rightarrow y$ and 
      $((x\rightsquigarrow y)\rightarrow y)\rightsquigarrow y=x\rightsquigarrow y$.  
\end{lemma}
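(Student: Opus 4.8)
The plan is to stay entirely inside the ``arrow fragment'', using $(psBCK_1)$ as a transitivity-type inequality, $(psBCK_2)$ as an adjunction-type inequality, $(psBCK_6)$ to convert freely between ``$a\le b$'' and ``$a\rightarrow b=1$'' (equivalently ``$a\rightsquigarrow b=1$''), and $(psBCK_5)$ to upgrade a pair of inequalities to an equality. Since the whole axiom system is invariant under interchanging $\rightarrow$ with $\rightsquigarrow$, in each of $(1)$--$(6)$ it suffices to prove one of the two halves, the other being its mirror image. First I would record the auxiliary facts $x\rightarrow x=x\rightsquigarrow x=1$ and $x\rightarrow 1=x\rightsquigarrow 1=1$ (immediate from $(psBCK_3)$, $(psBCK_4)$, $(psBCK_6)$), the identities $1\rightarrow x=x$ and $1\rightsquigarrow x=x$ (one inequality each from suitable instances of $(psBCK_2)$, then $(psBCK_5)$), and transitivity of $\le$ (from $(psBCK_1)$ together with $1\rightsquigarrow a=a$: if $a\rightarrow b=1$ and $b\rightarrow c=1$ then $1\le(b\rightarrow c)\rightsquigarrow(a\rightarrow c)=a\rightarrow c$).

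Items $(1)$ and $(2)$ are then one-liners. For $(1)$, from $x\le y$ we have $x\rightarrow y=1$, and $(psBCK_1)$ gives $z\rightarrow x\le(x\rightarrow y)\rightsquigarrow(z\rightarrow y)=1\rightsquigarrow(z\rightarrow y)=z\rightarrow y$. For $(2)$, again $x\rightarrow y=1$, and $(psBCK_1)$ gives $1\le(y\rightarrow z)\rightsquigarrow(x\rightarrow z)$, so that element equals $1$ by $(psBCK_4)$ and $(psBCK_5)$, i.e. $y\rightarrow z\le x\rightarrow z$.

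The crux is the exchange identity $(4)$. To prove $x\rightarrow(y\rightsquigarrow z)\le y\rightsquigarrow(x\rightarrow z)$ I would instantiate $(psBCK_1)$ as $x\rightarrow(y\rightsquigarrow z)\le\bigl((y\rightsquigarrow z)\rightarrow z\bigr)\rightsquigarrow(x\rightarrow z)$, invoke $(psBCK_2)$ in the form $y\le(y\rightsquigarrow z)\rightarrow z$, apply the order-reversing monotonicity from $(2)$ to get $\bigl((y\rightsquigarrow z)\rightarrow z\bigr)\rightsquigarrow(x\rightarrow z)\le y\rightsquigarrow(x\rightarrow z)$, and chain by transitivity. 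The reverse inequality is obtained symmetrically, using the second clause of $(psBCK_1)$, the first clause of $(psBCK_2)$ (namely $x\le(x\rightarrow z)\rightsquigarrow z$), and the monotonicity $(2)$; then $(psBCK_5)$ yields equality. With $(4)$ in hand the remaining items are short. For $(3)$, $(psBCK_1)$ gives $z\rightarrow x\le(x\rightarrow y)\rightsquigarrow(z\rightarrow y)$, which by $(psBCK_6)$ says $(z\rightarrow x)\rightarrow\bigl[(x\rightarrow y)\rightsquigarrow(z\rightarrow y)\bigr]=1$; rewriting the left-hand side by $(4)$ turns this into $(x\rightarrow y)\rightsquigarrow\bigl[(z\rightarrow x)\rightarrow(z\rightarrow y)\bigr]=1$, i.e. $x\rightarrow y\le(z\rightarrow x)\rightarrow(z\rightarrow y)$. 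For $(5)$, $(4)$ gives $x\rightsquigarrow(y\rightarrow x)=y\rightarrow(x\rightsquigarrow x)=y\rightarrow 1=1$, hence $x\le y\rightarrow x$ by $(psBCK_6)$. For $(6)$, $(psBCK_2)$ supplies both $x\rightarrow y\le\bigl((x\rightarrow y)\rightsquigarrow y\bigr)\rightarrow y$ and $x\le(x\rightarrow y)\rightsquigarrow y$, and feeding the latter into the monotonicity $(2)$ gives $\bigl((x\rightarrow y)\rightsquigarrow y\bigr)\rightarrow y\le x\rightarrow y$, so $(psBCK_5)$ finishes.

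The main obstacle is item $(4)$: one has to guess the right substitutions into $(psBCK_1)$ and $(psBCK_2)$ so that the two outer ``residuals'' line up and transitivity applies, and throughout one must be scrupulous about which of $\rightarrow$, $\rightsquigarrow$ occupies which slot, since in a pseudo BCK-algebra the two operations genuinely differ and can be swapped only through $(4)$ itself. Once $(4)$ is established, everything else is routine bookkeeping with $(psBCK_1)$, $(psBCK_2)$, $(psBCK_5)$, $(psBCK_6)$ and the monotonicity rules $(1)$--$(2)$.
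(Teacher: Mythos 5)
Your proof is correct, and since the paper only cites this lemma from Georgescu--Iorgulescu without reproducing an argument, your derivation stands on its own as a complete, self-contained verification from the axioms $(psBCK_1)$--$(psBCK_6)$. The key steps all check out: the duality under interchanging $\rightarrow$ and $\rightsquigarrow$ is legitimate because the axiom system is symmetric under that swap; the instantiations of $(psBCK_1)$ and $(psBCK_2)$ in item $(4)$ line up exactly as you describe, with the monotonicity from $(2)$ bridging the outer terms; and the derivations of $(3)$, $(5)$, $(6)$ from $(4)$ and the earlier items are all valid. This is the standard route to these identities, so there is nothing to amend.
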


For more details about the properties of a pseudo BCK-algebra we refer te reader to \cite{Ior14} and \cite{Ciu2}. \\
Let $A$ be a pseudo BCK-algebra. The subset $D \subseteq A$ is called a \emph{deductive system} of $A$ if 
it satisfies the following conditions:\\
$(i)$ $1 \in D;$ \\
$(ii)$ for all $x, y \in A$, if $x, x \rightarrow y \in D$ then $y \in D$. \\ 
Condition $(ii)$ is equivalent to the condition: \\
$(ii^{\prime})$ for all $x, y \in B$, if $x, x \rightsquigarrow y \in D$ then $y \in D$. \\ 
A deductive system $D$ of a pseudo BCK-algebra $A$ is said to be \emph{normal} if it satisfies the condition:\\
$(iii)$ for all $x, y \in A$, $x \rightarrow y \in D$ iff $x \rightsquigarrow y \in D$. \\
We will denote by ${\mathcal DS}(A)$ the set of all deductive systems and by ${\mathcal DS}_{n}(A)$ 
the set of all normal deductive systems of a pseudo BCK-algebra $A$. 
Obviously $\{1\}, A \in {\mathcal DS}(A), {\mathcal DS}_n(A)$ and 
${\mathcal DS}_n(A)\subseteq {\mathcal DS}(A)$. 
A pseudo BCK-algebra is called \emph{simple} if ${\mathcal DS}(A)=\{\{1\},A\}$. \\ 
Given $H\in {\mathcal DS}_n(A)$, the relation $\Theta_H$ on $A$ defined by $(x,y)\in \Theta_H$ iff 
$x\rightarrow y\in H$ and $y\rightarrow x\in H$ is a congruence on $A$. 
Then $H=[1]_{\Theta_H}$ and $A/H=(A/\Theta_H,\rightarrow, \rightsquigarrow,[1]_{\Theta_H})$ is a pseudo BCK-algebra  
and we write $x/H=[x]_{\Theta_H}$ for every $x\in A$ (see \cite{Kuhr6}). \\
The function $\pi_H: A \longrightarrow A/H$ defined by $\pi_H(x)=x/H$ for any $x\in A$ is a surjective homomorphism which is called the \emph{canonical projection} from $A$ to $A/H$. One can easily prove that $\Ker(\pi_H)=H$. \\
For every subset $X\subseteq A$, the smallest deductive system of $A$ containing $X$ (i.e. the intersection of 
all deductive systems $D\in{\mathcal DS}(A)$ such that $X\subseteq D$) is called the deductive 
system \emph{generated by} $X$, and it will be denoted by $[X)$. If $X=\{x\}$ we write $[x)$ instead of $[\{x\})$. \\
For all $x, y\in A$ and $n\in{\mathbb N}$ we define $x\rightarrow^n y$ and $x\rightsquigarrow^n y$ inductively 
as follows: \\
$\hspace*{2cm}$ $x\rightarrow^0 y=y$, $x\rightarrow^n y=x\rightarrow (x\rightarrow^{n-1}y)$ for $n\ge 1;$ \\
$\hspace*{2cm}$ $x\rightsquigarrow^0 y=y$, 
                $x\rightsquigarrow^n y=x\rightsquigarrow (x\rightsquigarrow^{n-1} y)$ for $n\ge 1$. \\
Pseudo BE-algebras were introduced in \cite{Bor2} as another generalization of pseudo BCK-algebras. 
A \emph{pseudo BE-algebra} is an algebra $(A, \rightarrow, \rightsquigarrow, 1)$ of the type $(2, 2, 0)$ 
such that the following axioms hold for all $x, y, z\in A$: \\
$(psBE_1)$ $x\rightarrow x=x\rightsquigarrow x=1,$ \\
$(psBE_2)$ $x\rightarrow 1=x\rightsquigarrow 1=1,$ \\
$(psBE_3)$ $1\rightarrow x=1\rightsquigarrow x=x,$ \\
$(psBE_4)$ $x\rightarrow (y\rightsquigarrow z)=y\rightsquigarrow (x\rightarrow z),$ \\
$(psBE_5)$ $x\rightarrow y=1$ iff $x\rightsquigarrow y=1$. \\ 
It was proved in \cite[Prop. 2.10]{Ciu31} that any pseudo BCK-algebra is a pseudo BE-algebra. \\
A \emph{pseudo-hoop} is an algebra $(A,\odot, \rightarrow, \rightsquigarrow,1)$ of the type $(2,2,2,0)$ such that for all $x, y, z \in A$: \\
$(psH_1)$ $x\odot 1=1\odot x=x;$\\
$(psH_2)$ $x\rightarrow x=x\rightsquigarrow x=1;$\\
$(psH_3)$ $(x\odot y) \rightarrow z=x\rightarrow (y\rightarrow z);$\\
$(psH_4)$ $(x\odot y) \rightsquigarrow z=y\rightsquigarrow (x\rightsquigarrow z);$\\
$(psH_5)$ $(x\rightarrow y)\odot x=(y\rightarrow x)\odot y=x\odot(x\rightsquigarrow y)=y\odot(y\rightsquigarrow x)$. \\
A pseudo-hoop is a meet-semilattice with \\
$\hspace*{2cm}$
$x\wedge y=(x\rightarrow y)\odot x=(y\rightarrow x)\odot y=x\odot(x\rightsquigarrow y)=y\odot(y\rightsquigarrow x)$. \\
A pseudo-hoop $(A,\odot, \rightarrow, \rightsquigarrow,1)$ is said to be a \emph{Wajsberg pseudo-hoop} if it satisfies the following conditions for all $x, y\in A$: \\
$(W_1)$ $(x\rightarrow y)\rightsquigarrow y=(y\rightarrow x)\rightsquigarrow x;$\\
$(W_2)$ $(x\rightsquigarrow y)\rightarrow y=(y\rightsquigarrow x)\rightarrow x$. \\
A pseudo-hoop $A$ is called \emph{basic} if it satisfies the following conditions for all $x, y, z\in A$: \\
$(B_1)$ $(x\rightarrow y)\rightarrow z \leq ((y\rightarrow x)\rightarrow z)\rightarrow z;$ \\
$(B_2)$ $(x\rightsquigarrow y)\rightsquigarrow z \leq ((y\rightsquigarrow x)\rightsquigarrow z)\rightsquigarrow z$. \\
Every pseudo-hoop is a pseudo BCK-meet-semilattice (\cite[Prop. 2.2]{Ciu2}), every Wajsberg pseudo-hoop is a basic 
pseudo-hoop (\cite[Prop. 4.9]{Geo16}), and every simple basic pseudo-hoop is a linearly ordered Wajsberg pseudo-hoop (\cite[Cor. 4.15]{Geo16}).

$\vspace*{5mm}$

\section{On commutative pseudo BCK-algebras}

Commutative pseudo BCK-algebras were originally defined by G. Georgescu and A. Iorgulescu in \cite{Geo15} 
under the name of \emph{semilattice-ordered pseudo BCK-algebras}, while properties of these structures were 
investigated by J. K{\"u}hr in \cite{Kuhr6}, \cite{Kuhr2}. 
In this section we present some equational bases for commutative pseudo BCK-algebras. 
We first recall the equational system proved by J. $\rm K \ddot{u}hr$ in \cite{Kuhr2} as a 
generalization of the equational system given by H. Yutani in \cite{Yut1}, \cite{Yut2} for commutative BCK-algebras. 
We generalize to the case of commutative pseudo BCK-algebras the axiom systems proved by M. Pa\l asi\'nski 
and B. Wo\'zniakowska in \cite{Pal1} and by W.H. Cornish in \cite{Cor1} for commutative BCK-algebras. 
A characterization of commutative pseudo BCK-algebras is also given. 

\begin{Def} \label{comm-ds-10} A pseudo BCK-algebra $(A,\rightarrow,\rightsquigarrow,1)$ is said to be 
\emph{commutative} if it satisfies the following conditions, for all $x, y\in A$: \\ 
$\hspace*{3cm}$ $(x\rightarrow y)\rightsquigarrow y=(y\rightarrow x)\rightsquigarrow x,$ \\
$\hspace*{3cm}$ $(x\rightsquigarrow y)\rightarrow y=(y\rightsquigarrow x)\rightarrow x$. 
\end{Def}

If $(A,\rightarrow,\rightsquigarrow,1)$ is a commutative pseudo BCK-algebra then $(A,\le)$ is a join-semilattice, where $x\vee y=(x\rightarrow y)\rightsquigarrow y=(x\rightsquigarrow y)\rightarrow y$ (see \cite{Geo15}). 
Conversely, if $(A,\le)$ is a join pseudo BCK-semilattice with 
$x\vee y=(x\rightarrow y)\rightsquigarrow y=(x\rightsquigarrow y)\rightarrow y$, from $x\vee y=y\vee x$, we have 
$(x\rightarrow y)\rightsquigarrow y=(y\rightarrow x)\rightsquigarrow x$ and 
$(x\rightsquigarrow y)\rightarrow y=(y\rightsquigarrow x)\rightarrow x$, for all $x, y\in A$, that is 
$A$ is commutative. 
We obtained the following result:  

\begin{prop} \label{comm-ds-20} A pseudo BCK-algebra $A$ is commutative if and only if $(A,\le)$ is a 
join semilattice with $x\vee y=(x\rightarrow y)\rightsquigarrow y=(x\rightsquigarrow y)\rightarrow y$. 
\end{prop}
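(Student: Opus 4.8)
The plan is to prove the two implications separately. The ``if'' part is already contained in the paragraph preceding the statement, so I will only indicate it briefly; the substance lies in the ``only if'' part, which I would prove by showing directly that $x\vee y:=(x\rightarrow y)\rightsquigarrow y$ is the least upper bound of $\{x,y\}$ whenever $A$ is commutative.

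First I would verify that $x\vee y$ is an upper bound: $x\le(x\rightarrow y)\rightsquigarrow y$ is exactly $(psBCK_2)$, and $y\le(x\rightarrow y)\rightsquigarrow y$ follows from Lemma~\ref{ps-eq-40}(5) applied with $z:=x\rightarrow y$. The key step is minimality. Assume $x\le z$ and $y\le z$. From $x\le z$, Lemma~\ref{ps-eq-40}(2) gives $z\rightarrow y\le x\rightarrow y$, and a second application of Lemma~\ref{ps-eq-40}(2) (antitonicity of $\rightsquigarrow$ in the first variable) gives $(x\rightarrow y)\rightsquigarrow y\le(z\rightarrow y)\rightsquigarrow y$. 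Now I would use commutativity in the form $(z\rightarrow y)\rightsquigarrow y=(y\rightarrow z)\rightsquigarrow z$; since $y\le z$ means $y\rightarrow z=1$ by $(psBCK_6)$, we get $(y\rightarrow z)\rightsquigarrow z=1\rightsquigarrow z=z$ by $(psBCK_4')$. Concatenating, $x\vee y\le z$, so $x\vee y$ is $\sup\{x,y\}$ and $(A,\le)$ is a join-semilattice. Repeating the argument with $\rightarrow$ and $\rightsquigarrow$ interchanged (using the mirror halves of Lemma~\ref{ps-eq-40} and the second commutativity identity, together with $(psBCK_3')$) shows that $(x\rightsquigarrow y)\rightarrow y$ is also $\sup\{x,y\}$; by uniqueness of suprema, $x\vee y=(x\rightarrow y)\rightsquigarrow y=(x\rightsquigarrow y)\rightarrow y$.

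For the converse I would simply observe that if $(A,\le)$ is a join-semilattice with $x\vee y=(x\rightarrow y)\rightsquigarrow y=(x\rightsquigarrow y)\rightarrow y$, then commutativity of $\vee$ (namely $x\vee y=y\vee x$) forces $(x\rightarrow y)\rightsquigarrow y=(y\rightarrow x)\rightsquigarrow x$ and $(x\rightsquigarrow y)\rightarrow y=(y\rightsquigarrow x)\rightarrow x$, which is Definition~\ref{comm-ds-10}. The only delicate point in the whole argument is the minimality step, and inside it the identity $(y\rightarrow z)\rightsquigarrow z=z$ for $y\le z$ (and its mirror); once this is isolated the rest is routine monotonicity bookkeeping from Lemma~\ref{ps-eq-40}.
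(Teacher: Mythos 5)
Your proof is correct. The ``if'' direction is verbatim the argument the paper gives in the paragraph preceding the statement (commutativity of $\vee$ forces the two identities of Definition~\ref{comm-ds-10}), so there is nothing to compare there. The difference is in the ``only if'' direction: the paper does not prove it at all but simply cites \cite{Geo15}, whereas you supply a complete argument. Your argument is the standard and correct one: $(x\rightarrow y)\rightsquigarrow y$ is an upper bound of $x$ by $(psBCK_2)$ and of $y$ by Lemma~\ref{ps-eq-40}(5), and minimality follows from two applications of the antitonicity in Lemma~\ref{ps-eq-40}(2) together with the commutativity identity $(z\rightarrow y)\rightsquigarrow y=(y\rightarrow z)\rightsquigarrow z$ and the observation that $y\le z$ collapses the right-hand side to $1\rightsquigarrow z=z$. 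The mirror computation and uniqueness of suprema then give $(x\rightarrow y)\rightsquigarrow y=(x\rightsquigarrow y)\rightarrow y$. What your version buys is self-containedness: the reader does not need to consult \cite{Geo15}, and the proof isolates exactly which axioms and which parts of Lemma~\ref{ps-eq-40} are used. The cost is only length; no step is missing or incorrect.
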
  

\begin{prop} \label{comm-ds-30} A pseudo BCK-algebra $(A,\rightarrow,\rightsquigarrow,1)$ is a commutative if and 
only if $(x\rightarrow y)\rightsquigarrow y\le (y\rightarrow x)\rightsquigarrow x$ and 
$(x\rightsquigarrow y)\rightarrow y\le (y\rightsquigarrow x)\rightarrow x$, for all $x, y\in A$.  
\end{prop}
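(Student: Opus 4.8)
The plan is to treat the two implications separately, with essentially all of the work confined to checking that the stated inequalities already force the corresponding equalities.

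For the forward direction I would simply invoke Definition \ref{comm-ds-10}: if $A$ is commutative, then $(x\rightarrow y)\rightsquigarrow y=(y\rightarrow x)\rightsquigarrow x$ and $(x\rightsquigarrow y)\rightarrow y=(y\rightsquigarrow x)\rightarrow x$ for all $x,y\in A$, so in particular both displayed inequalities hold (with equality).

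For the converse, the key observation is that each of the two hypothesized inequalities becomes its own reverse upon interchanging $x$ and $y$. Concretely, I would apply the first inequality with the pair $(y,x)$ in place of $(x,y)$ to obtain $(y\rightarrow x)\rightsquigarrow x\le (x\rightarrow y)\rightsquigarrow y$; combining this with the original inequality $(x\rightarrow y)\rightsquigarrow y\le (y\rightarrow x)\rightsquigarrow x$ and the antisymmetry axiom $(psBCK_5)$ gives $(x\rightarrow y)\rightsquigarrow y=(y\rightarrow x)\rightsquigarrow x$ for all $x,y\in A$. Then I would repeat the same step for the second inequality to get $(x\rightsquigarrow y)\rightarrow y=(y\rightsquigarrow x)\rightarrow x$. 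These are exactly the two identities of Definition \ref{comm-ds-10}, hence $A$ is commutative.

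I do not expect any genuine obstacle here: the argument uses only the universal quantifier in the hypothesis (which licenses the substitution $x\leftrightarrow y$) together with antisymmetry of $\le$. The one point worth flagging is that both inequalities are really needed, each producing one of the two commutativity identities; unlike the case of a single implication, neither of the two can be dropped in favour of the other.
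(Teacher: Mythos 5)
Your proposal is correct and follows exactly the paper's argument: the forward direction is immediate from the definition, and the converse swaps $x$ and $y$ in each inequality and applies antisymmetry to recover the two commutativity identities. No issues.
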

\begin{proof} By interchanging $x$ and $y$ we obtain 
$(y\rightarrow x)\rightsquigarrow x\le (x\rightarrow y)\rightsquigarrow y$ and 
$(y\rightsquigarrow x)\rightarrow x\le (x\rightsquigarrow y)\rightarrow y$, for all $x, y\in A$, that is $A$ is 
commutative.
\end{proof}

\begin{rems} \label{comm-ds-30-05} 
$(1)$ Commutative pseudo BE-algebras have the same definition as pseudo BCK-algebras. 
Any commutative pseudo BE-algebra is a pseudo BCK-algebra (\cite[Th. 3.4]{Ciu31}). \\ 
$(2)$ A pseudo BCI-algebra $(A,\rightarrow,\rightsquigarrow,1)$ satisfying the conditions: \\  
$\hspace*{3cm}$ $(x\rightarrow y)\rightsquigarrow y=(y\rightarrow x)\rightsquigarrow x,$ \\
$\hspace*{3cm}$ $(x\rightsquigarrow y)\rightarrow y=(y\rightsquigarrow x)\rightarrow x$ \\
for all $x, y\in A$, is a pseudo BCK-algebra (\cite[Th. 3.6]{Dud1}). 
\end{rems}

We can see that in the definition of a commutative pseudo BCK-algebra, axiom $(psBCK_6')$ is a consequence of 
axioms $(psBCK_3')$, $(psBCK_4')$ and the commutativity. 
Indeed if $x\rightarrow y=1$ and $y\rightarrow x=1$ then we have 
$x=1\rightsquigarrow x=(y\rightarrow x)\rightsquigarrow x=(x\rightarrow y)\rightsquigarrow y=1\rightsquigarrow y=y$, 
that is $(psBCK_6')$. 
Hence the class of commutative pseudo BCK-algebras is a variety. 
It was proved in \cite[Th. 4.1.11]{Kuhr6} that the variety of commutative pseudo BCK-algebras is weakly regular, 
congruence and $3$-permutable. 
Since for commutative pseudo BCK-algebras axiom $(psBCK_6')$ is not independent, to find other axiom systems of commutative pseudo BCK-algebras is an interesting direction of research. 
We will generalize to the commutative pseudo BCK-algebras certain axiom systems proved for commutative BCK-algebras. 
The first equational base for commutative BCK-algebras was given in 1977 by H. Yutani, \cite{Yut1}, \cite{Yut2}. 
He proved that an algebra $(A,*,0)$ of type $(2,0)$ is a commutative BCK-algebra if and only if it satisfies the following identities for all $x, y, z\in A$: $(i)$ $x*x=0$, $(ii)$ $x*0=x$, $(iii)$ $(x*y)*z=(x*z)*y$, 
$(iv)$ $x*(x*y)=y*(y*x)$. 
This result was generalized by J. $\rm K \ddot{u}hr$ in \cite{Kuhr2} to the case of pseudo BCK-algebras. 

\begin{theo} \label{comm-ds-30-10} $\rm($\cite[Th. 4.2]{Kuhr2}$\rm)$ 
An algebra $(A,\rightarrow,\rightsquigarrow,1)$ of type $(2,2,0)$ is a commutative pseudo BCK-algebra if and only if  
it satisfies the following identities for all $x, y, z\in A$: \\
$(Y_1)$ $(x\rightarrow y)\rightsquigarrow y=(y\rightarrow x)\rightsquigarrow x$ and 
        $(x\rightsquigarrow y)\rightarrow y=(y\rightsquigarrow x)\rightarrow x;$ \\
$(Y_2)$ $x\rightarrow (y\rightsquigarrow z)=y\rightsquigarrow (x\rightarrow z);$ \\      
$(Y_3)$ $x\rightarrow x=x\rightsquigarrow x=1;$ \\
$(Y_4)$ $1\rightarrow x=1\rightsquigarrow x=x$. 
\end{theo}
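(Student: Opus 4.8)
The plan is to prove the two implications separately; the forward one is routine bookkeeping, so I would dispatch it first. If $A=(A,\rightarrow,\rightsquigarrow,1)$ is a commutative pseudo BCK-algebra, then $(Y_1)$ is precisely the defining condition of Definition \ref{comm-ds-10}, $(Y_2)$ is the first identity in Lemma \ref{ps-eq-40}$(4)$, $(Y_4)$ is $(psBCK_3')$ together with $(psBCK_4')$ from the equivalent presentation of a pseudo BCK-algebra recalled above, and $(Y_3)$ follows from reflexivity of $\leq$ via $(psBCK_3)$ and $(psBCK_6)$. For the reverse implication, the strategy is to show that any $(A,\rightarrow,\rightsquigarrow,1)$ of type $(2,2,0)$ satisfying $(Y_1)$--$(Y_4)$ is a commutative pseudo BE-algebra, and then to invoke Remark \ref{comm-ds-30-05}$(1)$ to conclude that it is a pseudo BCK-algebra, necessarily commutative by $(Y_1)$. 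Since $(psBE_1)=(Y_3)$, $(psBE_3)=(Y_4)$, $(psBE_4)=(Y_2)$, and the commutativity identities are exactly $(Y_1)$, the only things left to derive from $(Y_1)$--$(Y_4)$ are $(psBE_2)$, i.e.\ $x\rightarrow 1=x\rightsquigarrow 1=1$, and $(psBE_5)$, i.e.\ $x\rightarrow y=1$ iff $x\rightsquigarrow y=1$.

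For $(psBE_2)$ the argument I have in mind is short. Put $x\vee y:=(x\rightarrow y)\rightsquigarrow y$. Successive uses of $(Y_1)$, $(Y_4)$ and $(Y_3)$ give $x\vee 1=(x\rightarrow 1)\rightsquigarrow 1=(1\rightarrow x)\rightsquigarrow x=x\rightsquigarrow x=1$, with no circular appeal to $(psBE_2)$; and $(Y_2)$ with $(Y_3)$ gives $x\rightarrow(x\vee y)=x\rightarrow((x\rightarrow y)\rightsquigarrow y)=(x\rightarrow y)\rightsquigarrow(x\rightarrow y)=1$. Taking $y:=1$ in the second identity and substituting $x\vee 1=1$ yields $x\rightarrow 1=1$. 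Finally, $z:=y$ in $(Y_2)$ together with $(Y_3)$ gives $x\rightarrow 1=y\rightsquigarrow(x\rightarrow y)$, whence $y:=x$ gives $x\rightarrow 1=x\rightsquigarrow 1$, so $x\rightsquigarrow 1=1$ too. (Alternatively, once $x\rightarrow 1=1$ is in hand, $x\rightsquigarrow 1=1$ follows because the system $(Y_1)$--$(Y_4)$ is symmetric under interchanging $\rightarrow$ and $\rightsquigarrow$.)

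The step I expect to be the main obstacle is $(psBE_5)$; by the symmetry just noted it is enough to prove one direction, say $x\rightarrow y=1\Rightarrow x\rightsquigarrow y=1$. The rest of the reverse implication should be manageable: $\leq$ defined by $x\leq y:\Leftrightarrow x\rightarrow y=1$ is reflexive by $(Y_3)$ and antisymmetric by the computation $x=1\rightsquigarrow x=(y\rightarrow x)\rightsquigarrow x=(x\rightarrow y)\rightsquigarrow y=1\rightsquigarrow y=y$ (using $(Y_1)$ and $(Y_4)$); one has $x,y\leq x\vee y$ (the first inequality was checked above, the second from commutativity of $\vee$ under $(Y_1)$); and $x\leq y\Leftrightarrow x\vee y=y$. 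Putting $x\vee' y:=(x\rightsquigarrow y)\rightarrow y$, the same computations with $\rightarrow$ and $\rightsquigarrow$ exchanged give $x,y\leq' x\vee' y$ and $x\rightsquigarrow y=1\Leftrightarrow x\vee' y=y$ for the order $\leq'$ coming from $\rightsquigarrow$; so $(psBE_5)$ reduces to the identity $x\vee y=x\vee' y$, that is $(x\rightarrow y)\rightsquigarrow y=(x\rightsquigarrow y)\rightarrow y$. This is where the real difficulty lies: $(Y_2)$ only swaps a subterm $u\rightsquigarrow v$ sitting as the right argument of $\rightarrow$ (and symmetrically), so a term with a compound left argument such as $(x\rightarrow y)\rightsquigarrow y$ is not directly accessible, and one has to combine both halves of $(Y_1)$ with $(Y_2)$ through a more delicate chain of substitutions — equivalently, build the join-semilattice structure on $A$, showing $\vee$ associative and then that $\vee$ and $\vee'$ are both the join of the common order. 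Once $(psBE_1)$--$(psBE_5)$ and $(Y_1)$ are established, Remark \ref{comm-ds-30-05}$(1)$ finishes the proof.
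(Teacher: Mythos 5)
This theorem is quoted from K\"uhr \cite[Th. 4.2]{Kuhr2}; the paper itself gives no proof, so your proposal can only be judged on its own merits. The forward direction is fine, and your derivation of $(psBE_2)$ from $(Y_1)$--$(Y_4)$ is correct and complete: $x\vee 1=(x\rightarrow 1)\rightsquigarrow 1=(1\rightarrow x)\rightsquigarrow x=x\rightsquigarrow x=1$ and $x\rightarrow(x\vee y)=(x\rightarrow y)\rightsquigarrow(x\rightarrow y)=1$ together give $x\rightarrow 1=1$, and the arrow-swap symmetry of the axiom system legitimately transfers this to $x\rightsquigarrow 1=1$. The overall strategy of reducing to the pseudo BE-algebra axioms and invoking Remark \ref{comm-ds-30-05}$(1)$ is also sound in principle.

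The problem is that the one step you yourself identify as ``the main obstacle'' --- axiom $(psBE_5)$, which you correctly reduce to the identity $(x\rightarrow y)\rightsquigarrow y=(x\rightsquigarrow y)\rightarrow y$ --- is never actually proved, and this step is essentially the entire content of K\"uhr's theorem; everything else is routine bookkeeping. Moreover, the strategy you sketch for it is circular as stated: you propose to show that $\vee$ and $\vee'$ ``are both the join of the common order,'' but the existence of a common order is precisely the assertion that $x\rightarrow y=1$ iff $x\rightsquigarrow y=1$, i.e.\ $(psBE_5)$ itself. (Showing each of $\vee$, $\vee'$ is a semilattice operation does not help until one knows the two induced orders coincide.) A genuine proof requires a nontrivial equational derivation combining both halves of $(Y_1)$ with $(Y_2)$ --- for instance establishing associativity of $\vee$ and then an identity linking $\vee$ and $\vee'$ directly --- and none of that is supplied. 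As it stands the proposal is an outline with the crucial lemma missing, not a proof.
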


It is easy to see that condition $(Y_3)$ is equivalent to $x\rightarrow 1=x\rightsquigarrow 1=1$, for all $x\in A$.  
Indeed if $x\rightarrow x=x\rightsquigarrow x=1$ then 
$1=x\rightsquigarrow x=(1\rightarrow x)\rightsquigarrow x=(x\rightarrow 1)\rightsquigarrow 1$, by $(Y_1)$. 
Then $x\rightarrow 1=x\rightarrow ((x\rightarrow 1)\rightsquigarrow 1)=
(x\rightarrow 1)\rightsquigarrow (x\rightarrow 1)=1$, by $(Y_2)$. Similarly $x\rightsquigarrow 1=1$. 
Conversely, if $x\rightarrow 1=x\rightsquigarrow 1=1$ then 
$x\rightarrow x=(1\rightsquigarrow x)\rightarrow x=(x\rightsquigarrow 1)\rightarrow 1=1\rightarrow 1=1$, 
by $(Y_1)$ and $(Y_4)$. Similarly $x\rightsquigarrow x=1$. 
We obtain the characterization of a commutative pseudo BCK-algebra given by 
J. $\rm K \ddot{u}hr$ in \cite{Kuhr6}.  

\begin{prop} \label{comm-ds-40} $\rm($\cite[Prop. 4.1.10]{Kuhr6}$\rm)$ 
An algebra $(A,\rightarrow,\rightsquigarrow,1)$ of type $(2,2,0)$ is a commutative pseudo BCK-algebra if and only if  
it satisfies the following identities for all $x, y, z\in A$: \\
$(K_1)$ $(x\rightarrow y)\rightsquigarrow y=(y\rightarrow x)\rightsquigarrow x$ and 
        $(x\rightsquigarrow y)\rightarrow y=(y\rightsquigarrow x)\rightarrow x;$ \\
$(K_2)$ $x\rightarrow (y\rightsquigarrow z)=y\rightsquigarrow (x\rightarrow z);$ \\      
$(K_3)$ $x\rightarrow 1=x\rightsquigarrow 1=1;$ \\
$(K_4)$ $1\rightarrow x=1\rightsquigarrow x=x$. 
\end{prop}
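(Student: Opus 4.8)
The plan is to derive this as an immediate corollary of Theorem~\ref{comm-ds-30-10}, since the two axiom systems coincide except at one spot. First I would note that $(K_1)$, $(K_2)$ and $(K_4)$ are literally $(Y_1)$, $(Y_2)$ and $(Y_4)$, so the only point that needs attention is the relation between $(K_3)$, i.e. $x\rightarrow 1=x\rightsquigarrow 1=1$, and $(Y_3)$, i.e. $x\rightarrow x=x\rightsquigarrow x=1$. Thus the whole proof reduces to the claim that, modulo $(Y_1)$, $(Y_2)$, $(Y_4)$, the identities $(K_3)$ and $(Y_3)$ are equivalent.

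That equivalence is exactly the short computation recorded just before the statement, so I would simply invoke it. In one direction: assuming $(Y_1)$--$(Y_3)$, one gets $1=x\rightsquigarrow x=(1\rightarrow x)\rightsquigarrow x=(x\rightarrow 1)\rightsquigarrow 1$ by $(Y_4)$ and $(Y_1)$, and then $x\rightarrow 1=x\rightarrow((x\rightarrow 1)\rightsquigarrow 1)=(x\rightarrow 1)\rightsquigarrow(x\rightarrow 1)=1$ by $(Y_2)$ and $(Y_3)$, with the symmetric argument giving $x\rightsquigarrow 1=1$; so $(K_3)$ holds. In the other direction: assuming $(K_1)$--$(K_4)$, one gets $x\rightarrow x=(1\rightsquigarrow x)\rightarrow x=(x\rightsquigarrow 1)\rightarrow 1=1\rightarrow 1=1$ using $(K_4)$, $(K_1)$, $(K_3)$ and $(K_4)$ again, and symmetrically $x\rightsquigarrow x=1$; so $(Y_3)$ holds.

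With this in hand the proof splits cleanly. If $A$ is a commutative pseudo BCK-algebra, then Theorem~\ref{comm-ds-30-10} gives $(Y_1)$--$(Y_4)$, hence $(K_1)$, $(K_2)$, $(K_4)$ directly and $(K_3)$ by the first computation above. Conversely, if $A$ satisfies $(K_1)$--$(K_4)$, then it satisfies $(Y_1)$, $(Y_2)$, $(Y_4)$ and, by the second computation, $(Y_3)$; hence Theorem~\ref{comm-ds-30-10} applies and $A$ is a commutative pseudo BCK-algebra. I do not expect any genuine obstacle: the only mathematical content is the elementary derivation linking $(K_3)$ and $(Y_3)$, which is already established, so the argument is essentially a one-line reduction to Theorem~\ref{comm-ds-30-10}.
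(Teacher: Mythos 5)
Your proposal is correct and follows exactly the paper's route: the paper establishes Proposition~\ref{comm-ds-40} by observing that $(K_1)$, $(K_2)$, $(K_4)$ coincide with $(Y_1)$, $(Y_2)$, $(Y_4)$ and proving, in the paragraph preceding the statement, that $(Y_3)$ and $(K_3)$ are equivalent modulo the other axioms, then invoking Theorem~\ref{comm-ds-30-10}. Your two computations linking $(K_3)$ and $(Y_3)$ are the same ones the paper records, so there is nothing to add.
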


An axiom system for commutative BCK-algebras consisting of three identities was given by M. Pa\l asi\'nski 
and B. Wo\'zniakowska in \cite{Pal1}. They proved that an algebra $(A,*,0)$ of type $(2,0)$ is a commutative 
BCK-algebra if and only if it satisfies the following identities for all $x, y, z\in A$: 
$(i)$ $((x*y)*z)*((x*z)*y)=0$, $(ii)$ $x*(x*y)=y*(y*x)$, $(iii)$ $z*((x*y)*x)=z$ (see also \cite[Th. 2.1.7]{Huang1}). 
Following the idea used by M. Pa\l asi\'nski and B. Wo\'zniakowska, in what follows we give a generalization 
of this system to the case of commutative pseudo BCK-algebras. 

\begin{theo} \label{comm-ds-40-10} An algebra $(A,\rightarrow,\rightsquigarrow,1)$ of type $(2,2,0)$ is a commutative pseudo BCK-algebra if and only if it satisfies the following identities for all $x, y, z\in A$: \\
$(P_1)$ $(x\rightarrow (y\rightsquigarrow z))\rightarrow (y\rightsquigarrow (x\rightarrow z))=1$ and 
        $(x\rightsquigarrow (y\rightarrow z))\rightarrow (y\rightarrow (x\rightsquigarrow z))=1;$ \\
$(P_2)$ $(x\rightarrow y)\rightsquigarrow y=(y\rightarrow x)\rightsquigarrow x$ and  
        $(x\rightsquigarrow y)\rightarrow y=(y\rightsquigarrow x)\rightarrow x;$ \\   
$(P_3)$ $(x\rightarrow (y\rightsquigarrow x))\rightarrow z=
         (x\rightarrow (y\rightsquigarrow x))\rightsquigarrow z=z$ and \\
$\hspace*{0.7cm}$           
        $(x\rightsquigarrow (y\rightarrow x))\rightarrow z=
         (x\rightsquigarrow (y\rightarrow x))\rightsquigarrow z=z$. 
\end{theo}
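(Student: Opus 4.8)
The plan is to keep the two implications paired wherever possible and to reduce the converse to K\"uhr's characterisation (Proposition~\ref{comm-ds-40}). Since $(P_2)$ is literally $(K_1)$, the converse will amount to deriving $(K_2)$, $(K_3)$ and $(K_4)$ from $(P_1)$--$(P_3)$.

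For the forward implication I would argue as follows. Assuming $A$ is a commutative pseudo BCK-algebra, $(P_1)$ is immediate from Lemma~\ref{ps-eq-40}(4): since $x\rightarrow(y\rightsquigarrow z)=y\rightsquigarrow(x\rightarrow z)$ and $x\rightsquigarrow(y\rightarrow z)=y\rightarrow(x\rightsquigarrow z)$, the left-hand side of each identity in $(P_1)$ has the shape $a\rightarrow a$, which is $1$ by $(psBCK_3)$ and $(psBCK_6)$. Identity $(P_2)$ is exactly Definition~\ref{comm-ds-10}. For $(P_3)$, Lemma~\ref{ps-eq-40}(5) gives $x\le y\rightsquigarrow x$ and $x\le y\rightarrow x$, hence $x\rightarrow(y\rightsquigarrow x)=1$ and $x\rightsquigarrow(y\rightarrow x)=1$ by $(psBCK_6)$, and then $(psBCK_3')$, $(psBCK_4')$ finish it.

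The real content is the converse, where only the three identities may be used. The first step will be to read $(P_3)$ correctly: it says exactly that every element of the form $x\rightarrow(y\rightsquigarrow x)$ or $x\rightsquigarrow(y\rightarrow x)$ is a \emph{left unit}, meaning $u\rightarrow z=u\rightsquigarrow z=z$ for all $z$; such elements visibly exist. The second step is that any two left units $u_1,u_2$ are equal: applying $(P_2)$ with $x:=u_1$, $y:=u_2$ gives $(u_1\rightarrow u_2)\rightsquigarrow u_2=(u_2\rightarrow u_1)\rightsquigarrow u_1$, and both sides collapse — the left side to $u_2$, the right side to $u_1$ — because each $u_i$ is a left unit. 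Call this common left unit $e$. The third step is to substitute $z:=x$ in the first identity of $(P_1)$: the outer left factor becomes $x\rightarrow(y\rightsquigarrow x)=e$, so $e\rightarrow\bigl(y\rightsquigarrow(x\rightarrow x)\bigr)=1$, which — $e$ being a left unit — says $y\rightsquigarrow(x\rightarrow x)=1$ for all $x,y$; taking $y:=e$ then gives $x\rightarrow x=1$. The second identity of $(P_1)$, treated symmetrically, gives $y\rightarrow(x\rightsquigarrow x)=1$ and $x\rightsquigarrow x=1$. From these, $y\rightsquigarrow 1=y\rightarrow 1=1$, which is $(K_3)$; and $e\rightarrow e=1$ together with $e\rightarrow e=e$ forces $e=1$, so the constant $1$ is a left unit for both operations — that is $(K_4)$.

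Finally I would obtain $(K_2)$. With $(K_4)$ and $(P_2)$ in hand, the quasi-identity $(psBCK_6')$ follows by the computation already displayed just before the theorem, namely $x=1\rightsquigarrow x=(y\rightarrow x)\rightsquigarrow x=(x\rightarrow y)\rightsquigarrow y=1\rightsquigarrow y=y$ whenever $x\rightarrow y=y\rightarrow x=1$. Then $(P_1)$ is applied twice: its first identity gives $(x\rightarrow(y\rightsquigarrow z))\rightarrow(y\rightsquigarrow(x\rightarrow z))=1$, and its second identity, with the roles of $x$ and $y$ interchanged, gives $(y\rightsquigarrow(x\rightarrow z))\rightarrow(x\rightarrow(y\rightsquigarrow z))=1$; antisymmetry then yields $x\rightarrow(y\rightsquigarrow z)=y\rightsquigarrow(x\rightarrow z)$, i.e.\ $(K_2)$. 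Since $(K_1)=(P_2)$, all of $(K_1)$--$(K_4)$ hold, and Proposition~\ref{comm-ds-40} gives that $A$ is a commutative pseudo BCK-algebra. I expect the main obstacle to be the middle block of the converse: recognising that $(P_3)$ supplies a stock of left units, that $(P_2)$ forces them to collapse to a single element, and that the substitution $z:=x$ in $(P_1)$ is precisely what identifies that element with the constant $1$.
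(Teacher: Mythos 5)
Your proposal is correct and follows essentially the same route as the paper: the forward direction is checked directly, and the converse is reduced to K\"uhr's equational characterization by first using $(P_3)$ and $(P_2)$ to show that all elements of the form $x\rightarrow(y\rightsquigarrow x)$ or $x\rightsquigarrow(y\rightarrow x)$ coincide, then extracting $x\rightarrow x=1$, the unit laws, $(psBCK_6')$, and finally the exchange identity from a double application of $(P_1)$. Your ``left unit'' packaging is a clean reformulation of the paper's chain of substitutions rather than a different argument, and every step checks out.
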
 
\begin{proof}
If $(A,\rightarrow,\rightsquigarrow,1)$ is a commutative pseudo BCK-algebra then one can easily show that 
conditions $(P_1)-(P_3)$ hold. \\
Conversely, consider an algebra $(A,\rightarrow,\rightsquigarrow,1)$ satisfying conditions $(P_1)-(P_3)$. 
We will prove that $A$ satisfies conditions $(Y_1)-(Y_4)$ from Theorem \ref{comm-ds-30-10}, so it is a 
commutative pseudo BCK-algebra. As mentioned above, we will follow the ideea used in \cite{Pal1}. \\
First we show that $x\rightarrow (y\rightsquigarrow x)=u\rightarrow (v\rightsquigarrow u)$ and 
$x\rightsquigarrow (y\rightarrow x)=u\rightsquigarrow (v\rightarrow u)$, for all $x, y, u, v\in A$. 
Indeed by $(P_3)$ and $(P_2)$, we have: \\
$\hspace*{1cm}$
$x\rightarrow (y\rightsquigarrow x)=(x\rightarrow (y\rightsquigarrow x))\rightsquigarrow 
                                    (x\rightarrow (y\rightsquigarrow x))$ \\
$\hspace*{3.2cm}$
$=((u\rightarrow (v\rightsquigarrow u))\rightarrow (x\rightarrow (y\rightsquigarrow x)))\rightsquigarrow 
                                                   (x\rightarrow (y\rightsquigarrow x))$ \\ 
$\hspace*{3.2cm}$
$=((x\rightarrow (y\rightsquigarrow x))\rightarrow (u\rightarrow (v\rightsquigarrow u)))\rightsquigarrow 
                                                   (u\rightarrow (v\rightsquigarrow u))$ \\ 
$\hspace*{3.2cm}$
$=(u\rightarrow (v\rightsquigarrow u))\rightsquigarrow (u\rightarrow (v\rightsquigarrow u))$ \\                       $\hspace*{3.2cm}$
$=u\rightarrow (v\rightsquigarrow u)$. \\
Similarly $x\rightsquigarrow (y\rightarrow x)=u\rightsquigarrow (v\rightarrow u)$. \\
Replacing $u$ with $x$ and $v$ with $u\rightarrow (v\rightsquigarrow u)$ in the identity 
$x\rightarrow (y\rightsquigarrow x)=u\rightarrow (v\rightsquigarrow u)$ and applying $(P_3)$ we get 
$x\rightarrow (y\rightsquigarrow x)=x\rightarrow ((u\rightarrow (v\rightsquigarrow u))\rightsquigarrow x)=
x\rightarrow x$. \\
Similarly $x\rightsquigarrow (y\rightarrow x)=x\rightsquigarrow x$, 
$u\rightarrow (v\rightsquigarrow u)=u\rightarrow u$, 
$u\rightsquigarrow (v\rightarrow u)=u\rightsquigarrow u$. 
It follows that $x\rightarrow x=u\rightarrow u$ and $x\rightsquigarrow x=u\rightsquigarrow u$. 
From the identity $x\rightarrow x=u\rightarrow u$, replacing $u$ by $x\rightarrow (x\rightsquigarrow x)$ and 
applying $(P_3)$ and $(P_1)$ we get: \\ 
$\hspace*{1cm}$
$x\rightarrow x=(x\rightarrow (x\rightsquigarrow x))\rightarrow (x\rightarrow (x\rightsquigarrow x))=
x\rightarrow (x\rightsquigarrow x)$ \\
$\hspace*{2.2cm}$
$=(x\rightsquigarrow (x\rightarrow x))\rightarrow (x\rightarrow (x\rightsquigarrow x))=1$. \\
Similarly from $x\rightsquigarrow x=u\rightsquigarrow u$, replacing $u$ by $x\rightsquigarrow (x\rightarrow x)$ 
we obtain: \\ 
$\hspace*{1cm}$
$x\rightsquigarrow x=(x\rightsquigarrow (x\rightarrow x))\rightsquigarrow (x\rightsquigarrow (x\rightarrow x))=
x\rightsquigarrow (x\rightarrow x)$ \\
$\hspace*{2.2cm}$
$=(x\rightarrow (x\rightsquigarrow x))\rightarrow (x\rightsquigarrow (x\rightarrow x))=1$, 
hence $(Y_3)$ is proved. \\
From $x\rightarrow x=1$, $x\rightarrow (y\rightsquigarrow x)=x\rightarrow x$ and $(P_3)$ we get 
$1\rightsquigarrow x=(x\rightarrow x)\rightsquigarrow x=(x\rightarrow (y\rightsquigarrow x))\rightsquigarrow x=x$. 
Similarly from $x\rightsquigarrow x=1$, $x\rightsquigarrow (y\rightarrow x)=x\rightsquigarrow x$ and $(P_3)$ we  
obtain $1\rightarrow x=1$, that is $(Y_4)$. 
If $x\rightarrow y=1$ and $y\rightarrow x=1$ then by $(Y_4)$ and $(P_2)$ we have 
$x=1\rightsquigarrow x=(y\rightarrow x)\rightsquigarrow x=(x\rightarrow y)\rightsquigarrow y=1\rightsquigarrow y=y$. 
It follows that $(psBCK_6')$ holds. 
Exchanging $x$ and $y$ in $(P_1)$ we have 
$(y\rightarrow (x\rightsquigarrow z))\rightarrow (x\rightsquigarrow (y\rightarrow z))=1$ and 
$(y\rightsquigarrow (x\rightarrow z))\rightarrow (x\rightarrow (y\rightsquigarrow z))=1$. 
Applying $(psBCK_6')$ it follows that  
$x\rightarrow (y\rightsquigarrow z)=y\rightsquigarrow (x\rightarrow z)$, that is $(Y_2)$. 
Since $(P_2)$ is in fact $(Y_1)$, we conclude that $A$ satisfies conditions $(Y_1)-(Y_4)$ from Theorem \ref{comm-ds-30-10}, thus it is a commutative pseudo BCK-algebra.  
\end{proof}

W.H. Cornish gave in \cite{Cor1} an axiom system for commutative BCK-algebras consisting of two identities, namely 
an algebra $(A,*,0)$ of type $(2,0)$ is a commutative BCK-algebra if and only if it satisfies the following 
identities for all $x, y, z\in A$: $(i)$ $x*(0*y)=x$, $(ii)$ $(x*z)*(x*y)=(y*z)*(y*x)$ 
(see also \cite[Th. 2.1.8]{Huang1}). 
In the next result we generalize Cornish's axiom system to the case of commutative pseudo BCK-algebras.  

\begin{theo} \label{comm-ds-50} An algebra $(A,\rightarrow,\rightsquigarrow,1)$ of type $(2,2,0)$ is a commutative pseudo BCK-algebra if and only if it satisfies the following identities for all $x, y, z\in A$: \\
$(C_1)$ $(x\rightarrow 1)\rightsquigarrow y=(x\rightsquigarrow 1)\rightarrow y=y;$ \\
$(C_2)$ $(x\rightarrow y)\rightsquigarrow (z\rightarrow y)=(y\rightarrow x)\rightsquigarrow (z\rightarrow x);$ \\
$(C_3)$ $(x\rightsquigarrow y)\rightarrow (z\rightsquigarrow y)=(y\rightsquigarrow x)\rightarrow (z\rightsquigarrow x)$.
\end{theo}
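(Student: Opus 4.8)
The plan is to prove the equivalence by establishing $(C_1)$--$(C_3)$ directly in a commutative pseudo BCK-algebra, and conversely reducing the axioms $(C_1)$--$(C_3)$ to the identities $(Y_1)$--$(Y_4)$ of Theorem \ref{comm-ds-30-10}.

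\textbf{Necessity.} Assume $A$ is a commutative pseudo BCK-algebra. Then $x\rightarrow 1=1$ by $(psBCK_5')$, and $x\rightsquigarrow 1=1$ follows from $(psBCK_4)$ together with $(psBCK_6)$; hence $(x\rightarrow 1)\rightsquigarrow y=1\rightsquigarrow y=y$ and $(x\rightsquigarrow 1)\rightarrow y=1\rightarrow y=y$, which is $(C_1)$. For $(C_2)$, Lemma \ref{ps-eq-40}(4) gives $(x\rightarrow y)\rightsquigarrow(z\rightarrow y)=z\rightarrow\bigl((x\rightarrow y)\rightsquigarrow y\bigr)$, and since $(x\rightarrow y)\rightsquigarrow y=x\vee y$ by Proposition \ref{comm-ds-20}, both sides of $(C_2)$ equal $z\rightarrow(x\vee y)$. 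Identity $(C_3)$ is obtained symmetrically, from the dual equality in Lemma \ref{ps-eq-40}(4) and $(x\rightsquigarrow y)\rightarrow y=x\vee y$.

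\textbf{Sufficiency.} Assume $(C_1)$--$(C_3)$. I would run the following cascade of substitutions. Putting $y=1$ in $(C_1)$ gives $(x\rightsquigarrow 1)\rightarrow 1=1$ and $(x\rightarrow 1)\rightsquigarrow 1=1$; substituting $x\rightsquigarrow 1$, respectively $x\rightarrow 1$, for $x$ in the two halves of $(C_1)$ then yields $1\rightsquigarrow y=y$ and $1\rightarrow y=y$, i.e.\ $(Y_4)$. Next, $(C_2)$ at $x=1$ combined with $(C_1)$ gives $y\rightsquigarrow(z\rightarrow y)=z\rightarrow 1$, which at $z=1$ reads $y\rightsquigarrow y=1$; dually, from $(C_3)$, $y\rightarrow(z\rightsquigarrow y)=z\rightsquigarrow 1$ and $y\rightarrow y=1$, so $(Y_3)$ holds. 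Putting $y=z$ in $(C_2)$ and using $(Y_3)$ gives $(x\rightarrow z)\rightsquigarrow 1=1$; since every element has the form $1\rightarrow z$, this forces $w\rightsquigarrow 1=1$ for all $w$, and dually $w\rightarrow 1=1$. Consequently the two identities above become $y\rightsquigarrow(z\rightarrow y)=1$ and $y\rightarrow(z\rightsquigarrow y)=1$ for all $y,z$. Finally $(C_2)$ and $(C_3)$ at $z=1$, with $(Y_4)$, give $(Y_1)$, and then $(psBCK_6')$ follows from $(Y_1)$ and $(Y_4)$ exactly as in the proof of Theorem \ref{comm-ds-40-10}.

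It remains to prove $(Y_2)$, namely $x\rightarrow(y\rightsquigarrow z)=y\rightsquigarrow(x\rightarrow z)$, and this is the heart of the matter. My plan is: (i) establish the compatibility of the two implications, $p\rightarrow q=1\iff p\rightsquigarrow q=1$, using $(Y_1)$ to write $q=(q\rightarrow p)\rightsquigarrow p$ when $p\rightarrow q=1$ and then pushing this through suitable substitutions in $(C_2)$ and $(C_3)$; (ii) deduce the transitivity identities $(psBCK_1')$, $(psBCK_2')$ --- here one rewrites $(y\rightarrow z)\rightsquigarrow(x\rightarrow z)=(z\rightarrow y)\rightsquigarrow(x\rightarrow y)$ by $(C_2)$, observes that $(x\rightarrow y)\rightarrow\bigl[(z\rightarrow y)\rightsquigarrow(x\rightarrow y)\bigr]=1$ by the identity $u\rightarrow(v\rightsquigarrow u)=1$ from the previous paragraph, and converts the outer $\rightarrow$ to $\rightsquigarrow$ via (i), and dually from $(C_3)$; (iii) conclude that $A$ satisfies $(psBCK_1')$--$(psBCK_6')$, hence is a pseudo BCK-algebra, so that Lemma \ref{ps-eq-40}(4) yields $(Y_2)$ at once; (iv) invoke Theorem \ref{comm-ds-30-10} to conclude that $A$ is a commutative pseudo BCK-algebra. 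The main obstacle is step (i): because $(C_1)$--$(C_3)$ treat $\rightarrow$ and $\rightsquigarrow$ asymmetrically, the equivalence of the two ``$=1$'' relations does not drop out of a single substitution and will require a short chain combining $(C_2)$, $(C_3)$ with the derived identities $u\rightsquigarrow(v\rightarrow u)=1$ and $u\rightarrow(v\rightsquigarrow u)=1$; once that is in place, the remaining bookkeeping parallels the proof of Theorem \ref{comm-ds-40-10}.
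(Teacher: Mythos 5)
Your necessity argument and the first stage of your sufficiency argument are correct: the derivations of $(Y_4)$, $(Y_3)$, of $x\rightarrow 1=x\rightsquigarrow 1=1$, of the identities $u\rightsquigarrow(v\rightarrow u)=1$ and $u\rightarrow(v\rightsquigarrow u)=1$, of the commutativity identities $(Y_1)$, and of $(psBCK_6')$ all check out. The problem is that the part you yourself call ``the heart of the matter'' is never carried out. Step (i), the compatibility $p\rightarrow q=1\iff p\rightsquigarrow q=1$, is only announced (``will require a short chain combining\dots''), and steps (ii)--(iv) all depend on it: without (i) you cannot convert the outer arrow in $(x\rightarrow y)\rightarrow\bigl[(y\rightarrow z)\rightsquigarrow(x\rightarrow z)\bigr]=1$, hence you get no $(psBCK_1')$, no pseudo BCK-structure, no Lemma \ref{ps-eq-40}(4), and no $(Y_2)$. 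As written, the sufficiency direction stops exactly where the difficulty lies, so this is a genuine gap rather than routine bookkeeping.

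The paper's proof shows the detour through (i) and $(Y_2)$ is unnecessary. It obtains the transitivity axioms by the computation you essentially already have: by $(C_2)$, $(y\rightarrow z)\rightsquigarrow(x\rightarrow z)=(z\rightarrow y)\rightsquigarrow(x\rightarrow y)$, and then
$(x\rightarrow y)\rightarrow\bigl[(z\rightarrow y)\rightsquigarrow(x\rightarrow y)\bigr]=(1\rightsquigarrow(x\rightarrow y))\rightarrow\bigl[(z\rightarrow y)\rightsquigarrow(x\rightarrow y)\bigr]=((x\rightarrow y)\rightsquigarrow 1)\rightarrow((z\rightarrow y)\rightsquigarrow 1)=1$
by $(C_3)$ and $x\rightsquigarrow 1=1$ --- this is precisely your identity $u\rightarrow(v\rightsquigarrow u)=1$ --- with the dual computation for $(psBCK_2')$. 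Having $(psBCK_1')$--$(psBCK_6')$ together with the commutativity identities, it concludes directly from Definition \ref{comm-ds-10}; neither $(Y_2)$ nor Theorem \ref{comm-ds-30-10} is invoked. If you adopt that route, the only residual point is the one you correctly spotted and the paper passes over in silence: the computation produces the transitivity identity with $\rightarrow$ as the outer operation, whereas $(psBCK_1')$ is stated with $\rightsquigarrow$ outside. You should either justify that the mixed-arrow form suffices for the equational axiomatization or derive the $\rightsquigarrow$-outer form by a further $(C_2)$/$(C_3)$ manipulation; but that is a much smaller repair than the unproven compatibility claim on which your current plan rests.
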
 
\begin{proof}
Let $(A,\rightarrow,\rightsquigarrow,1)$ be a commutative pseudo BCK-algebra. 
By $(psBCK_5')$, $x\rightarrow 1=1$, so $x\le 1$, hence $x\rightsquigarrow 1=1$. 
Applying $(psBCK_3')$ we get $(x\rightarrow 1)\rightsquigarrow y=(x\rightsquigarrow 1)\rightarrow y=y$, 
that is $(C_1)$. 
From $(x\rightarrow y)\rightsquigarrow y=(y\rightarrow x)\rightsquigarrow x$ we have 
$z\rightarrow ((x\rightarrow y)\rightsquigarrow y)=z\rightarrow ((y\rightarrow x)\rightsquigarrow x)$, and 
applying Lemma \ref{ps-eq-40}$(4)$, it follows that 
$(x\rightarrow y)\rightsquigarrow (z\rightarrow y)=(y\rightarrow x)\rightsquigarrow (z\rightarrow x)$, 
that is $(C_2)$. 
Similarly from $(x\rightsquigarrow y)\rightarrow y=(y\rightsquigarrow x)\rightarrow x$ we get  
$(x\rightsquigarrow y)\rightarrow (z\rightsquigarrow y)=(y\rightsquigarrow x)\rightarrow (z\rightsquigarrow x)$, 
that is $(C_3)$. \\
Conversely, consider an algebra $(A,\rightarrow,\rightsquigarrow,1)$ satisfying 
conditions $(C_1)$, $(C_2)$, $(C_3)$. \\ 
Applying $(C_1)$ and $(C_2)$ we have: \\
$(y\rightarrow x)\rightsquigarrow x=(y\rightarrow x)\rightsquigarrow((y\rightsquigarrow 1)\rightarrow x)=
(x\rightarrow y)\rightsquigarrow((y\rightsquigarrow 1)\rightarrow y)=(x\rightarrow y)\rightsquigarrow y$. \\
Similarly $(y\rightsquigarrow x)\rightarrow x=(x\rightsquigarrow y)\rightarrow y$, hence the commutativity is proved. \\
Applying $(C_1)$ we get $(1\rightarrow 1)\rightsquigarrow 1=1$ and 
$1\rightarrow x=((1\rightarrow 1)\rightsquigarrow 1)\rightarrow x=x$, and similarly $1\rightsquigarrow x=x$. 
Hence $(psBCK_3')$ and $(psBCK_4')$ are proved. \\
By $(C_1)$ we have $((x\rightarrow 1)\rightarrow 1)\rightsquigarrow (1\rightarrow 1)=1\rightsquigarrow 1=1$. \\
Applying $(C_1)$ we get $(x\rightarrow 1)\rightsquigarrow (x\rightarrow 1)=x\rightarrow 1$ and 
$(x\rightsquigarrow 1)\rightarrow (x\rightsquigarrow 1)=x\rightsquigarrow 1$. \\
On the other hand, by $(psBCK_3')$ and $(C_2)$ we have: \\
$\hspace*{1cm}$
$(x\rightarrow 1)\rightsquigarrow (x\rightarrow 1)=
(1\rightarrow (x\rightarrow 1))\rightsquigarrow (1\rightarrow (x\rightarrow 1))$ \\
$\hspace*{4.4cm}$
$=((x\rightarrow 1)\rightarrow 1)\rightsquigarrow (1\rightarrow 1)=1\rightsquigarrow 1=1$. \\
Hence $x\rightarrow 1=1$ and similarly $x\rightsquigarrow 1=1$, that is $(psBCK_5')$. \\
By $(C_2)$ we have 
$(y\rightarrow z)\rightsquigarrow (x\rightarrow z)=(z\rightarrow y)\rightsquigarrow (x\rightarrow y)$, so \\
$\hspace*{0.5cm}$
$(x\rightarrow y)\rightarrow ((y\rightarrow z)\rightsquigarrow (x\rightarrow z))=
(x\rightarrow y)\rightarrow ((z\rightarrow y)\rightsquigarrow (x\rightarrow y))$ \\
$\hspace*{6cm}$
$=(1\rightsquigarrow (x\rightarrow y))\rightarrow ((z\rightarrow y)\rightsquigarrow (x\rightarrow y))$ \\
$\hspace*{6cm}$
$=((x\rightarrow y)\rightsquigarrow 1)\rightarrow ((z\rightarrow y)\rightsquigarrow 1)$ \\
$\hspace*{6cm}$
$=1\rightarrow 1=1$, \\
that is $(psBCK_1')$. Similarly $(psBCK_2')$. \\
Suppose that $x\rightarrow y=y\rightarrow x=1$. By $(psBCK_4')$ and by commutativity we have: \\
$y=1\rightsquigarrow y=(x\rightarrow y)\rightsquigarrow y=(y\rightarrow x)\rightsquigarrow x=
1\rightsquigarrow x=x$, that is $(psBCK_6')$. \\
We conclude that $(A,\rightarrow,\rightsquigarrow,1)$ is a commutative pseudo BCK-algebra.
\end{proof}

In the next result we give a characterization of commutative pseudo BCK-algebras. 

\begin{theo} \label{comm-ds-60} Let $(A,\rightarrow,\rightsquigarrow,1)$ be a pseudo BCK-algebra. 
The following are equivalent for all $x, y\in A$: \\
$(a)$ $A$ is commutative; \\
$(b)$ $x\rightarrow y=((y\rightarrow x)\rightsquigarrow x)\rightarrow y$ and 
      $x\rightsquigarrow y=((y\rightsquigarrow x)\rightarrow x)\rightsquigarrow y;$ \\
$(c)$ $(x\rightarrow y)\rightsquigarrow y=(((x\rightarrow y)\rightsquigarrow y)\rightarrow x)\rightsquigarrow x$ and 
      $(x\rightsquigarrow y)\rightarrow y=(((x\rightsquigarrow y)\rightarrow y)\rightsquigarrow x)\rightarrow x;$ \\
$(d)$ $x\le y$ implies $y=(y\rightarrow x)\rightsquigarrow x=(y\rightsquigarrow x)\rightarrow x$.
\end{theo}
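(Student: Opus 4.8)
I would prove the cycle $(a)\Rightarrow(b)\Rightarrow(c)\Rightarrow(d)\Rightarrow(a)$, since each link is short. Throughout, by left--right symmetry I only argue one of the two arrows in each condition, the other being identical with $\rightarrow$ and $\rightsquigarrow$ interchanged.

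For $(a)\Rightarrow(b)$: assume $A$ commutative. By Proposition~\ref{comm-ds-20}, $(y\rightarrow x)\rightsquigarrow x=x\vee y\ge y$, so by Lemma~\ref{ps-eq-40}$(2)$ we get $((y\rightarrow x)\rightsquigarrow x)\rightarrow y\le x\rightarrow y$. For the reverse inequality, note $x\le (y\rightarrow x)\rightsquigarrow x$ by $(psBCK_2)$, hence again by Lemma~\ref{ps-eq-40}$(2)$, $x\rightarrow y\ge ((y\rightarrow x)\rightsquigarrow x)\rightarrow y$ would go the wrong way---so instead I use $x\le x\vee y$ to get $x\rightarrow y\ge (x\vee y)\rightarrow y$; but also $(x\vee y)\rightarrow y=((x\rightarrow y)\rightsquigarrow y)\rightarrow y=x\rightarrow y$ by Lemma~\ref{ps-eq-40}$(6)$. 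Combining, $((y\rightarrow x)\rightsquigarrow x)\rightarrow y=(x\vee y)\rightarrow y=x\rightarrow y$, which is $(b)$.

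For $(b)\Rightarrow(c)$: this is just $(b)$ with $x$ replaced by $(x\rightarrow y)\rightsquigarrow y$, using that $((x\rightarrow y)\rightsquigarrow y)\rightarrow y=x\rightarrow y$ from Lemma~\ref{ps-eq-40}$(6)$ to simplify the outermost term; I expect a one-line substitution. For $(c)\Rightarrow(d)$: if $x\le y$ then $x\rightarrow y=1$, so $(x\rightarrow y)\rightsquigarrow y=1\rightsquigarrow y=y$ by $(psBCK_3')$, and $(c)$ reads $y=((y\rightarrow x)\rightsquigarrow x)$ after the same simplification, which is exactly $(d)$ (the other half symmetric). Finally $(d)\Rightarrow(a)$: for arbitrary $x,y$, set $z=(x\rightarrow y)\rightsquigarrow y$. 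By $(psBCK_2)$ we have $y\le z$, so $(d)$ applied to the pair $y\le z$ gives $z=(z\rightarrow y)\rightsquigarrow y$; but $z\rightarrow y=((x\rightarrow y)\rightsquigarrow y)\rightarrow y=x\rightarrow y$ by Lemma~\ref{ps-eq-40}$(6)$, so $z=(x\rightarrow y)\rightsquigarrow y$ trivially---this loops. The correct move is to apply $(d)$ to $x\le (x\rightarrow y)\rightsquigarrow y=z$ and to $y\le z$ simultaneously: from $x\le z$ we get $z=(z\rightarrow x)\rightsquigarrow x$, and I must show $z\rightarrow x=(x\rightarrow y)\rightsquigarrow y)\rightarrow x$ relates to $y\rightarrow x$; using Lemma~\ref{ps-eq-40} and $(b)$-type manipulations one identifies $(z\rightarrow x)\rightsquigarrow x$ with $(y\rightarrow x)\rightsquigarrow x$, yielding $(x\rightarrow y)\rightsquigarrow y=(y\rightarrow x)\rightsquigarrow x$.

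The main obstacle is the implication $(d)\Rightarrow(a)$: one must cleverly choose \emph{which} comparable pair to feed into $(d)$ so that the resulting identity is the commutativity law rather than a tautology. The natural candidates are $x\le x\vee y$ and $y\le x\vee y$ where $x\vee y:=(x\rightarrow y)\rightsquigarrow y$, and the key computational fact needed is $((x\rightarrow y)\rightsquigarrow y)\rightarrow y=x\rightarrow y$ from Lemma~\ref{ps-eq-40}$(6)$, together with part $(4)$ of that lemma to move arrows past each other. All other links are routine substitutions into earlier lemmas.
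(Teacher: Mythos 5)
Your overall route $(a)\Rightarrow(b)\Rightarrow(c)\Rightarrow(d)\Rightarrow(a)$ is the same as the paper's, and your $(a)\Rightarrow(b)$ and $(c)\Rightarrow(d)$ links land on exactly the paper's computations (for $(a)\Rightarrow(b)$, the clean statement is just $x\rightarrow y=((x\rightarrow y)\rightsquigarrow y)\rightarrow y=((y\rightarrow x)\rightsquigarrow x)\rightarrow y$ by Lemma~\ref{ps-eq-40}$(6)$ and commutativity). But your $(b)\Rightarrow(c)$ step, as described, fails. Write $u=(x\rightarrow y)\rightsquigarrow y$. Substituting $u$ for $x$ in $(b)$ (keeping $y$) gives $u\rightarrow y=((y\rightarrow u)\rightsquigarrow u)\rightarrow y$; since $y\le u$ forces $y\rightarrow u=1$ and $1\rightsquigarrow u=u$, the right-hand side collapses to $u\rightarrow y$ and the instance is a tautology --- the Lemma~\ref{ps-eq-40}$(6)$ simplification you invoke only rewrites both sides as $x\rightarrow y$. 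The substitution that actually works is the other one: put $u$ in the $y$-slot and keep $x$, so that $(b)$ gives $1=x\rightarrow u=((u\rightarrow x)\rightsquigarrow x)\rightarrow u$, i.e. $(u\rightarrow x)\rightsquigarrow x\le u$, which combined with $u\le(u\rightarrow x)\rightsquigarrow x$ from $(psBCK_2)$ and antisymmetry yields $(c)$. (The paper takes a different, longer path here: it composes $(b)$ with $\rightsquigarrow y$, swaps $x$ and $y$, and uses $(psBCK_2)$ to first recover the full commutativity identity, then substitutes back.) Either way, the step is not the one-line substitution you wrote down.

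Your $(d)\Rightarrow(a)$ is essentially the paper's argument but is stated too optimistically at the decisive point: you claim one ``identifies'' $(z\rightarrow x)\rightsquigarrow x$ with $(y\rightarrow x)\rightsquigarrow x$, but that identification is precisely (half of) the commutativity you are trying to prove. What $y\le z$ actually yields, via Lemma~\ref{ps-eq-40}$(2)$ applied twice, is only the one-sided inequality $(y\rightarrow x)\rightsquigarrow x\le(z\rightarrow x)\rightsquigarrow x=z=(x\rightarrow y)\rightsquigarrow y$. To finish you must then invoke Proposition~\ref{comm-ds-30} (a single inequality in each arrow, holding for all $x,y$, already implies commutativity by interchanging $x$ and $y$), which is exactly how the paper closes the loop. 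So the missing ingredients are: the correct choice of substitution in $(b)\Rightarrow(c)$, and the explicit appeal to the one-inequality-suffices observation in $(d)\Rightarrow(a)$.
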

\begin{proof}
$(a)$ $\Rightarrow (b)$ Since $A$ is commutative, applying Lemma \ref{ps-eq-40}$(6)$ we get:\\
$\hspace*{2cm}$ $x\rightarrow y=((x\rightarrow y)\rightsquigarrow y)\rightarrow y=
                                ((y\rightarrow x)\rightsquigarrow x)\rightarrow y$ and \\
$\hspace*{2cm}$ $x\rightsquigarrow y=((x\rightsquigarrow y)\rightarrow y)\rightsquigarrow y=
                                     ((y\rightsquigarrow x)\rightarrow x)\rightsquigarrow y$. \\                       
$(b)$ $\Rightarrow (c)$ By $(b)$ we have 
$(x\rightarrow y)\rightsquigarrow y=(((y\rightarrow x)\rightsquigarrow x)\rightarrow y)\rightsquigarrow y$. \\         
Exchanging $x$ and $y$ we get 
$(y\rightarrow x)\rightsquigarrow x=(((x\rightarrow y)\rightsquigarrow y)\rightarrow x)\rightsquigarrow x$. \\
By $(psBCK_2)$ we have 
$(x\rightarrow y)\rightsquigarrow y\le (((x\rightarrow y)\rightsquigarrow y)\rightarrow x)\rightsquigarrow x$. \\
It follows that $(x\rightarrow y)\rightsquigarrow y\le (y\rightarrow x)\rightsquigarrow x$. \\
Similarly $(y\rightarrow x)\rightsquigarrow x\le (x\rightarrow y)\rightsquigarrow y$, that is 
$(x\rightarrow y)\rightsquigarrow y=(y\rightarrow x)\rightsquigarrow x$. \\
Hence $(x\rightarrow y)\rightsquigarrow y=(((x\rightarrow y)\rightsquigarrow y)\rightarrow x)\rightsquigarrow x$. \\ 
Similarly $(x\rightsquigarrow y)\rightarrow y=(((x\rightsquigarrow y)\rightarrow y)\rightsquigarrow x)\rightarrow x$. \\
$(c)$ $\Rightarrow (d)$ Since $x\le y$ implies $x\rightarrow y=x\rightsquigarrow y=1$, applying $(c)$ we get 
$\hspace*{2cm}$ $y=(y\rightarrow x)\rightsquigarrow x$ and $y=(y\rightsquigarrow x)\rightarrow x$. \\
$(d)$ $\Rightarrow (a)$ From $x\le (x\rightarrow y)\rightsquigarrow y$, applying $(d)$ we have \\
$\hspace*{2cm}$ 
$(x\rightarrow y)\rightsquigarrow y=(((x\rightarrow y)\rightsquigarrow y)\rightarrow x)\rightsquigarrow x$. \\
From $y\le (x\rightarrow y)\rightsquigarrow y$, applying Lemma \ref{ps-eq-40}$(2)$ twice, we get: \\
$\hspace*{2cm}$ $((x\rightarrow y)\rightsquigarrow y)\rightarrow x \le y\rightarrow x$ and \\
$\hspace*{2cm}$ 
$(y\rightarrow x)\rightsquigarrow x\le(((x\rightarrow y)\rightsquigarrow y)\rightarrow x)\rightsquigarrow x$. \\ 
Hence $(y\rightarrow x)\rightsquigarrow x\le (x\rightarrow y)\rightsquigarrow y$. 
Similarly $(y\rightsquigarrow x)\rightarrow x\le (x\rightsquigarrow y)\rightarrow y$. \\ 
By Proposition \ref{comm-ds-30} it follows that $A$ is commutative. 
\end{proof}

Note that the equivalence $(a)$ $\Leftrightarrow (d)$ is also proved in \cite[Lemma 4.1.4]{Kuhr6}. 

\begin{ex} \label{comm-ds-60-10} (\cite[Ex. 4.1.2]{Kuhr2})
Let $(G, \vee,\wedge, \cdot, ^{-1}, e)$ be an $\ell$-group. On the negative cone $G^{-}=\{g\in G \mid g\le e\}$ we define the operations $x\rightarrow y=y\cdot (x\vee y)^{-1}$, $x\rightsquigarrow y=(x\vee y)^{-1}\cdot y$. 
Then $(G^{-}, \rightarrow, \rightsquigarrow, e)$ is a commutative pseudo BCK-algebra. 
\end{ex}

\begin{ex} \label{comm-ds-60-20} Let $(A,\rightarrow,\rightsquigarrow,1)$ be a pseudo BCK-algebra. 
A mapping $m:A\longrightarrow [0,\infty)$ such that $m(x\rightarrow y)=m(x\rightsquigarrow y)=m(y)-m(x)$ 
whenever $y\le x$ called a \emph{measure} on $A$. 
Then, by \cite[Prop. 4.3, Th. 4.8]{Ciu6}, $\Ker_0(m)=\{x\in A\mid m(x)=0\}\in {\mathcal DS}_{n}(A)$ and 
$A/\Ker_0(m)$ is a commutative pseudo BCK-algebra. 
\end{ex}

\begin{ex} \label{comm-ds-60-30} Let $(A,\rightarrow,\rightsquigarrow,1)$ be a pseudo BCK-algebra. 
A system ${\mathcal S}$ of measures on $A$ is an \emph{order-determing system} on $A$ if for all measures 
$m\in {\mathcal S}$, $m(x)\ge m(y)$ implies $x\le y$. If $A$ possesses an order-determing system ${\mathcal S}$ 
of measures then $A$ is commutative. \\
Indeed suppose that for $x, y\in A$ we have $x\le y$. Then, by \cite[Prop. 4.3]{Ciu6}, 
$m((y\rightsquigarrow x)\rightarrow x)=m((y\rightsquigarrow x)\rightarrow x)=m(y)$, for all $m\in {\mathcal S}$. 
Since ${\mathcal S}$ is order-determing then 
$(y\rightarrow x)\rightsquigarrow x=(y\rightsquigarrow x)\rightarrow x=y$. 
According to Theorem \ref{comm-ds-60}, $A$ is a commutative pseudo BCK-algebra. 
\end{ex}

\begin{ex} \label{comm-ds-60-35} Every Wajsberg pseudo-hoop is a commutative pseudo BCK-meet-semilattice.  
\end{ex}

\begin{rems} \label{comm-ds-60-40} 
$(1)$ Any finite commutative pseudo BCK-algebra is a BCK-algebra (see \cite[Cor. 3.6]{Kuhr2}). \\
$(2)$ If $(A,\rightarrow,\rightsquigarrow,0,1)$ is a bounded commutative pseudo BCK-algebra then 
$x^{\rightarrow_0\rightsquigarrow_0}=x^{\rightsquigarrow_0\rightarrow_0}=x$, that is $A$ is an \emph{involutive} 
pseudo BCK-algebra. 
More general, if $(A,\rightarrow,\rightsquigarrow,a,1)$ is a pointed commutative pseudo BCK-algebra then
$x^{-_a\sim_a}=x^{\sim_a-_a}=x$, for all $x\ge a$. 
\end{rems}

$\vspace*{5mm}$

\section{Commutative deductive systems of pseudo BCK-algebras}

In this section we define the commutative deductive systems of pseudo BCK-algebras and we investigate their 
properties by generalizing some results proved in \cite{Huang1} for the case of commutative ideals of BCI-algebras.
We prove that a pseudo BCK-algebra $A$ is commutative if and only if all deductive 
systems of $A$ are commutative. We show that a normal deductive system $H$ of a pseudo BCK-algebra $A$ is commutative if and only if $A/H$ is a commutative pseudo BCK-algebra. 

\begin{Def} \label{comm-ds-70} A deductive system $D$ of a pseudo BCK-algebra $(A,\rightarrow,\rightsquigarrow,1)$ 
is called \emph{commutative} if it satisfies the following conditions for all $x, y\in A$: \\ 
$(cds_1)$ $y\rightarrow x\in D$ implies $((x\rightarrow y)\rightsquigarrow y)\rightarrow x\in D;$ \\
$(cds_2)$ $y\rightsquigarrow x\in D$ implies $((x\rightsquigarrow y)\rightarrow y)\rightsquigarrow x\in D$. 
\end{Def}

We will denote by ${\mathcal DS}_c(A)$ the set of all commutative deductive systems of a pseudo BCK-algebra $A$. 

\begin{prop} \label{comm-ds-80} A subset $D$ of a pseudo BCK-algebra $(A,\rightarrow,\rightsquigarrow,1)$ is a 
commutative deductive system of $A$ if and only if it satisfies the following conditions for all $x, y, z\in A$: \\
$(1)$ $1\in D;$ \\
$(2)$ $z\rightarrow (y\rightarrow x)\in D$ and $z\in D$ implies 
      $((x\rightarrow y)\rightsquigarrow y)\rightarrow x\in D;$ \\
$(3)$ $z\rightsquigarrow (y\rightsquigarrow x)\in D$ and $z\in D$ implies 
      $((x\rightsquigarrow y)\rightarrow y)\rightsquigarrow x\in D$. 
\end{prop}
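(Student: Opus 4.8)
The plan is to prove the two implications separately. For the ``only if'' direction, assume $D$ is a commutative deductive system, i.e.\ it satisfies $(cds_1)$ and $(cds_2)$. Condition $(1)$ is immediate since $D$ is a deductive system. For $(2)$, suppose $z\rightarrow(y\rightarrow x)\in D$ and $z\in D$; applying condition $(ii)$ of the definition of a deductive system we get $y\rightarrow x\in D$, and then $(cds_1)$ yields $((x\rightarrow y)\rightsquigarrow y)\rightarrow x\in D$. The argument for $(3)$ is symmetric, using $(ii')$ and $(cds_2)$. This direction is routine.

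For the ``if'' direction, assume $D$ satisfies $(1)$--$(3)$. First I would check that $D$ is a deductive system: condition $(1)$ gives $1\in D$, and for condition $(ii)$, suppose $x\in D$ and $x\rightarrow y\in D$. The idea is to feed these into $(2)$ with a suitable substitution so that the conclusion $((u\rightarrow v)\rightsquigarrow v)\rightarrow u\in D$ collapses to $y\in D$. A natural choice is to take $u=y$, $v=1$ in $(2)$: then $((y\rightarrow 1)\rightsquigarrow 1)\rightarrow y = (1\rightsquigarrow 1)\rightarrow y = 1\rightarrow y = y$ by $(psBCK_5')$, $(psBCK_4')$, $(psBCK_3')$, so it suffices to exhibit $z,\,z\rightarrow(1\rightarrow y)=z\rightarrow y$ in $D$; taking $z=x$ works since $x\rightarrow y\in D$. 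Hence $y\in D$ and $D\in{\mathcal DS}(A)$. Then for $(cds_1)$, given $y\rightarrow x\in D$, apply $(2)$ with the original $x,y$ and $z=1$: since $1\in D$ and $1\rightarrow(y\rightarrow x)=y\rightarrow x\in D$, we conclude $((x\rightarrow y)\rightsquigarrow y)\rightarrow x\in D$. The verification of $(cds_2)$ is analogous, using $(3)$ and $(psBCK_3')$--$(psBCK_5')$ together with condition $(ii')$.

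The main obstacle is the bookkeeping in the ``if'' direction: one must verify that $D$ is genuinely a deductive system \emph{before} invoking the deductive-system axioms, and the substitution $v=1$ reducing $((u\rightarrow v)\rightsquigarrow v)\rightarrow u$ to $u$ relies on the basic identities $1\rightarrow x=1\rightsquigarrow x=x$ and $x\rightarrow 1=1$, which hold in every pseudo BCK-algebra. Once that reduction is in place, everything else is a direct application of $(2)$ and $(3)$ with $z=1$. I would present the proof in the order: (i) $(1)$--$(3)$ $\Rightarrow$ $D\in{\mathcal DS}(A)$; (ii) $(1)$--$(3)$ $\Rightarrow$ $(cds_1),(cds_2)$; (iii) the easy converse $(cds_1),(cds_2)\Rightarrow(2),(3)$.
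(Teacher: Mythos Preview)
Your proposal is correct and follows essentially the same route as the paper's proof: the forward direction uses the deductive-system closure to pass from $z\rightarrow(y\rightarrow x)\in D$ to $y\rightarrow x\in D$ and then invokes $(cds_1)$, while the converse first recovers the deductive-system property via the substitution $x\mapsto y$, $y\mapsto 1$, $z\mapsto x$ in $(2)$ (reducing the conclusion to $y$), and then obtains $(cds_1)$, $(cds_2)$ by taking $z=1$. The only difference is presentational order, which is immaterial.
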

\begin{proof}
Let $D\in {\mathcal DS}_c(A)$. It follows that $1\in D$, that is condition $(1)$ is satisfied. \\
Consider $x, y, z\in A$ such that $z\rightarrow (y\rightarrow x)\in D$ and $z\in D$.  
Since $D$ is a deductive system, we have $y\rightarrow x\in D$, hence 
$((x\rightarrow y)\rightsquigarrow y)\rightarrow x\in D$, that is condition $(2)$.  
Similarly from $z\rightsquigarrow (y\rightsquigarrow x)\in D$ and $z\in D$ we get 
$((x\rightsquigarrow y)\rightarrow y)\rightsquigarrow x\in D$, that is condition $(3)$.  \\
Conversely, let $D$ be a subset of $A$ satisfying conditions $(1)$, $(2)$ and $(3)$. Obviously $1\in D$. \\ 
Consider $x, y\in D$ such that $x\rightarrow y\in D$ and $x\in D$. 
Since $x\rightarrow (1\rightarrow y)=x\rightarrow y\in D$, applying $(2)$ we get 
$y=((y\rightarrow 1)\rightsquigarrow 1)\rightarrow y\in D$. It follows that $D\in {\mathcal DS}(A)$. \\
Let $x, y\in A$ such that $y\rightarrow x\in D$. Since $1\rightarrow (y\rightarrow x)\in D$ and $1\in D$, 
by $(2)$ we get $((x\rightarrow y)\rightsquigarrow y)\rightarrow x\in D$.  
Similarly from $y\rightsquigarrow x\in D$ we obtain $((x\rightsquigarrow y)\rightarrow y)\rightsquigarrow x\in D$.  
We conclude that $D\in {\mathcal DS}_c(A)$. 
\end{proof}

\begin{ex} \label{comm-ds-90} Consider the set $A=\{0,a,b,c,d,1\}$ and the operations 
$\rightarrow,\rightsquigarrow$ given by the following tables:
\[
\hspace{10mm}
\begin{array}{c|ccccccc}
\rightarrow & 0 & a & b & c & d & 1 \\ \hline
0 & 1 & 1 & 1 & 1 & 1 & 1 \\ 
a & 0 & 1 & 1 & 1 & c & 1 \\ 
b & 0 & b & 1 & 1 & c & 1 \\ 
c & 0 & b & b & 1 & c & 1 \\ 
d & 0 & b & b & 1 & 1 & 1 \\
1 & 0 & a & b & c & d & 1 
\end{array}
\hspace{10mm} 
\begin{array}{c|ccccccc}
\rightsquigarrow & 0 & a & b & c & d & 1 \\ \hline
0 & 1 & 1 & 1 & 1 & 1 & 1 \\ 
a & 0 & 1 & 1 & 1 & c & 1 \\ 
b & 0 & c & 1 & 1 & c & 1 \\ 
c & 0 & a & b & 1 & c & 1 \\ 
d & 0 & a & b & 1 & 1 & 1 \\
1 & 0 & a & b & c & d & 1 
\end{array}
. 
\]
Then $(A,\rightarrow,\rightsquigarrow,1)$ is a pseudo BCK-algebra (see \cite[Ex. 3.1.4]{Kuhr6}). \\
One can see that 
${\mathcal DS}(A)=\{\{1\},\{1,c,d\},\{1,a,b,c,d\},A\}$, 
${\mathcal DS}_n(A)=\{\{1\},\{1,a,b,c,d\},A\}$,   
${\mathcal DS}_c(A)=\{\{1,a,b,c,d\},A\}$.
\end{ex}

\begin{ex} \label{comm-ds-100} Let $(A,\rightarrow,\rightsquigarrow,1)$ be a pseudo BCK-algebra and 
$m:A\longrightarrow [0,\infty)$ be a measure on $A$. 
It was proved in \cite[Prop. 4.2]{Ciu6} that 
$m((x\rightarrow y)\rightsquigarrow y)=m((y\rightarrow x)\rightsquigarrow x)$ and 
$m((x\rightsquigarrow y)\rightarrow y)=m((y\rightsquigarrow x)\rightarrow x)$, for all $x, y\in A$.
Consider $x, y\in A$ such that $y\rightarrow x\in \Ker_0(m)$, that is $m(y\rightarrow x)=0$. 
Since $x\le (x\rightarrow y)\rightsquigarrow y$ and $x\le y\rightarrow x$, we have 
$m(((x\rightarrow y)\rightsquigarrow y)\rightarrow x)=m(x)-m((x\rightarrow y)\rightsquigarrow y)=
                                                      m(x)-m((y\rightarrow x)\rightsquigarrow x)=
                                                      m(x)-m(x)+m(y\rightarrow x)=0$.   
Hence $((x\rightarrow y)\rightsquigarrow y)\rightarrow x\in \Ker_0(m)$. 
Similarly $y\rightsquigarrow x\in \Ker_0(m)$ implies 
$((x\rightsquigarrow y)\rightarrow y)\rightsquigarrow x\in \Ker_0(m)$. 
Thus $\Ker_0(m)\in {\mathcal DS}_{c}(A)$. 
\end{ex}

\begin{prop} \label{comm-ds-110} Let $(A,\rightarrow,\rightsquigarrow,1)$ be a pseudo BCK-algebra and 
$D\in {\mathcal DS}_c(A)$, $E\in {\mathcal DS}(A)$ such that $D\subseteq E$. Then $E\in {\mathcal DS}_c(A)$. 
\end{prop}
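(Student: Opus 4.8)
The plan is to check the two conditions of Definition~\ref{comm-ds-70} for $E$ directly, transporting the commutativity of $D$ through the inclusion $D\subseteq E$ and the fact that $E$ is closed under detachment. Since $E\in{\mathcal DS}(A)$, only $(cds_1)$ and $(cds_2)$ remain to be verified for $E$, and these two are interchanged by swapping $\rightarrow$ and $\rightsquigarrow$, so it is enough to treat $(cds_1)$. (Equivalently, one may use Proposition~\ref{comm-ds-80}: its condition $(1)$ is automatic for $E$, and because $E$ is a deductive system its conditions $(2)$, $(3)$ collapse to $(cds_1)$, $(cds_2)$.)

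So I would fix $x,y$ with $y\rightarrow x\in E$ and put $m=(x\rightarrow y)\rightsquigarrow y$. By $(psBCK_2)$ and Lemma~\ref{ps-eq-40} one records $x\le m$, $y\le m$, $m\rightarrow y=x\rightarrow y$ (part $(6)$) and $m\rightarrow x\le y\rightarrow x$ (part $(2)$). The premises from $D$ that can be used are those forced by its commutativity: instantiating $(cds_1)$ for $D$ at a pair whose hypothesis is the trivial membership $1\in D$ shows that $\big((u\rightarrow v)\rightsquigarrow v\big)\rightarrow u\in D$ whenever $v\le u$, and dually for $\rightsquigarrow$. Taking $u=m$, $v=x$ (legitimate since $x\le m$) puts $\big((m\rightarrow x)\rightsquigarrow x\big)\rightarrow m$ into $D\subseteq E$, and Lemma~\ref{ps-eq-40}$(6)$ simplifies $\big((m\rightarrow x)\rightsquigarrow x\big)\rightarrow x$ to $m\rightarrow x$. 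Since $E$ is a deductive system already containing $y\rightarrow x$, it is enough to show that the ``commutativity defect'' $(y\rightarrow x)\rightarrow\big(((x\rightarrow y)\rightsquigarrow y)\rightarrow x\big)$ belongs to $D$: as that element then lies in $D\subseteq E$ and $y\rightarrow x\in E$, detachment in $E$ yields $((x\rightarrow y)\rightsquigarrow y)\rightarrow x\in E$, which is $(cds_1)$. The inclusion of the defect in $D$ is a deductive-system-relative version of the implication $(a)\Rightarrow(b)$ of Theorem~\ref{comm-ds-60}, and I would derive it by combining the commutativity of $D$ with the exchange identity Lemma~\ref{ps-eq-40}$(4)$ and the monotonicity/antitonicity of the arrows (parts $(1)$--$(2)$); the argument for $(cds_2)$ is the mirror image with $\rightarrow$ and $\rightsquigarrow$ swapped.

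The step I expect to be the main obstacle is precisely this transfer. The commutativity of $D$ — whether read off Definition~\ref{comm-ds-70} or Proposition~\ref{comm-ds-80} — only manufactures new elements of $D$ out of hypotheses that are themselves in $D$, whereas the working hypothesis here, $y\rightarrow x\in E$, is only known to lie in the possibly strictly larger set $E$. The point of the proof is therefore not to try to place $y\rightarrow x$ inside $D$, but to isolate the part of the conclusion that is structurally forced into $D$ (the defect term) and let $D\subseteq E$ together with the detachment closure of $E$ combine it with the hypothesis; carrying out the verification that the defect term really lies in $D$ is the technical core.
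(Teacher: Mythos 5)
Your overall strategy — reduce $(cds_1)$ for $E$ to the claim that a ``commutativity defect'' of the form $(y\rightarrow x)\ast\bigl(((x\rightarrow y)\rightsquigarrow y)\rightarrow x\bigr)$ lies in $D$, then detach against $y\rightarrow x\in E$ — is exactly the shape of the paper's argument. But the proposal has a genuine gap: the one concrete application of $D$'s commutativity you actually carry out is not the one that produces the defect, and the derivation of the defect itself, which is the entire content of the proof, is left as an assertion. Instantiating your observation at $u=m=(x\rightarrow y)\rightsquigarrow y$, $v=x$ yields $\bigl((m\rightarrow x)\rightsquigarrow x\bigr)\rightarrow m\in D$; applying the exchange law to the outer $\rightarrow m=\rightarrow((x\rightarrow y)\rightsquigarrow y)$ pulls out the prefix $(x\rightarrow y)\rightsquigarrow(\cdots)$, not $(y\rightarrow x)\rightsquigarrow(\cdots)$, so it cannot be combined with the hypothesis $y\rightarrow x\in E$. (The side remark that Lemma \ref{ps-eq-40}$(6)$ simplifies $((m\rightarrow x)\rightsquigarrow x)\rightarrow x$ is also about a different element than the one you obtained, whose last arrow targets $m$, not $x$.)

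The instantiation that works is the paper's: set $u=y\rightarrow x$ and apply $(cds_1)$ of $D$ to the pair $\bigl(u\rightsquigarrow x,\;y\bigr)$, which is legitimate because $y\le (y\rightarrow x)\rightsquigarrow x$ forces $y\rightarrow(u\rightsquigarrow x)=1\in D$. This puts $a\rightarrow(u\rightsquigarrow x)\in D\subseteq E$ where $a=((u\rightsquigarrow x)\rightarrow y)\rightsquigarrow y$, and Lemma \ref{ps-eq-40}$(4)$ rewrites it as $u\rightsquigarrow(a\rightarrow x)$ — note the defect that actually comes out has $\rightsquigarrow$ as its outer connective, not $\rightarrow$ as you wrote, though either form suffices for detachment in $E$. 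Detaching against $u\in E$ gives $a\rightarrow x\in E$, and since $x\le u\rightsquigarrow x$ yields $(x\rightarrow y)\rightsquigarrow y\le a$ and hence $a\rightarrow x\le((x\rightarrow y)\rightsquigarrow y)\rightarrow x$, upward closure of $E$ finishes. The essential trick you are missing is that the element $u\rightsquigarrow x$ must be placed in the \emph{first} slot of $(cds_1)$ so that the exchange law can extract the prefix $u\rightsquigarrow$; without identifying this, the ``technical core'' you defer is not a routine verification.
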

\begin{proof}
Consider $x, y\in A$ such that $u=y\rightarrow x\in E$. 
It follows that $y\rightarrow (u\rightsquigarrow x)=y\rightarrow ((y\rightarrow x)\rightsquigarrow x)=1\in D$. 
Since D is commutative, we have 
$(((u\rightsquigarrow x)\rightarrow y)\rightsquigarrow y)\rightarrow (u\rightsquigarrow x)\in D$. 
From $D\subseteq E$ we get 
$(((u\rightsquigarrow x)\rightarrow y)\rightsquigarrow y)\rightarrow (u\rightsquigarrow x)\in E$. 
Applying Lemma \ref{ps-eq-40}$(4)$, it follows that 
$u\rightsquigarrow ((((u\rightsquigarrow x)\rightarrow y)\rightsquigarrow y)\rightarrow x)\in E$. 
Since $u\in E$, we get $(((u\rightsquigarrow x)\rightarrow y)\rightsquigarrow y)\rightarrow x\in E$. 
From $x\le u\rightsquigarrow x$, we have $(u\rightsquigarrow x)\rightarrow y\le x\rightarrow y$, and 
$(x\rightarrow y)\rightsquigarrow y\le ((u\rightsquigarrow x)\rightarrow y)\rightsquigarrow y$, and finally 
$(((u\rightsquigarrow x)\rightarrow y)\rightsquigarrow y)\rightarrow x\le 
((x\rightarrow y)\rightsquigarrow y)\rightarrow x$. 
Hence $((x\rightarrow y)\rightsquigarrow y)\rightarrow x\in E$. 
Similarly from $y\rightsquigarrow x\in E$, we get $((x\rightsquigarrow y)\rightarrow y)\rightsquigarrow x\in E$. 
We conclude that $E\in {\mathcal DS}_c(A)$.
\end{proof}

\begin{cor} \label{comm-ds-120} The deductive system $\{1\}$ of a pseudo BCK-algebra $A$ is commutative if and 
only if ${\mathcal DS}(A)={\mathcal DS}_c(A)$.
\end{cor}

\begin{theo} \label{comm-ds-130} A pseudo BCK-algebra $A$ is commutative if and only if 
$\{1\}\in {\mathcal DS}_c(A)$.
\end{theo}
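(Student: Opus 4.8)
The plan is to prove the two directions of the equivalence separately, leveraging the characterization of commutative pseudo BCK-algebras from Theorem \ref{comm-ds-60} and the characterization of commutative deductive systems from Definition \ref{comm-ds-70}.

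For the forward direction, assume $A$ is commutative. To show $\{1\}\in{\mathcal DS}_c(A)$, I would verify conditions $(cds_1)$ and $(cds_2)$ with $D=\{1\}$. So suppose $y\rightarrow x\in\{1\}$, i.e.\ $y\rightarrow x=1$, which means $x\le y$. By Theorem \ref{comm-ds-60}, part $(d)$, commutativity gives $y=(y\rightarrow x)\rightsquigarrow x$; more to the point, I need to show $((x\rightarrow y)\rightsquigarrow y)\rightarrow x=1$, i.e.\ $(x\rightarrow y)\rightsquigarrow y\le x$. But $x\le y$ forces $x\rightarrow y=1$, so $(x\rightarrow y)\rightsquigarrow y=1\rightsquigarrow y=y$ by $(psBCK_3')$ (or $(K_4)$), and we would need $y\le x$ — wait, that is not generally true. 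Let me reconsider: when $x\le y$ we actually want $((x\rightarrow y)\rightsquigarrow y)\rightarrow x = y\rightarrow x = 1$ since $x\le y$. That is immediate from $x\le y$. So $(cds_1)$ holds trivially, and symmetrically $(cds_2)$. Hence $\{1\}\in{\mathcal DS}_c(A)$, and in fact this direction does not even use commutativity beyond the observation that $y\rightarrow x=1$ means $x\le y$ — so the real content is in the converse.

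For the converse, assume $\{1\}\in{\mathcal DS}_c(A)$ and prove $A$ is commutative. I would apply condition $(cds_1)$ with a well-chosen substitution. The natural move: for arbitrary $x,y\in A$, set $y' = (x\rightarrow y)\rightsquigarrow y$ and note $y\le y'$ by $(psBCK_2)$, i.e.\ $y\rightarrow y' = 1\in\{1\}$ — so actually I want to trigger $(cds_1)$ with the hypothesis $y\rightarrow x\in\{1\}$, which needs $x\le y$. Instead, apply $(cds_1)$ to the pair where the antecedent holds. Observe that $x\le (x\rightarrow y)\rightsquigarrow y$; abbreviate $p=(x\rightarrow y)\rightsquigarrow y$. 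Then $p\rightarrow$ is not the right shape either. The cleanest route: in $(cds_1)$ replace $x$ by $x$ and $y$ by $p=(x\rightarrow y)\rightsquigarrow y$. Since $x\le p$ we have $p\rightarrow x\in\{1\}$? No — $x\le p$ gives $x\rightarrow p=1$, not $p\rightarrow x=1$. So I should instead use that $x\le (x\rightarrow y)\rightsquigarrow y$ means the roles are: I want $y\rightarrow x$ to be $1$, i.e.\ the instance of $(cds_1)$ I can fire needs $x\le y$. So take the known inequality $x\le p$ and apply $(cds_1)$ with the substitution $x\mapsto x$, $y\mapsto p$: since $x\le p$, indeed $p\rightarrow x$ need not be $1$, but $x\rightarrow p=1$. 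That is the wrong direction. Let me instead substitute $x\mapsto p$, $y\mapsto x$ into $(cds_1)$: the antecedent is $x\rightarrow p\in\{1\}$, which holds since $x\le p$. The conclusion is then $((p\rightarrow x)\rightsquigarrow x)\rightarrow p = 1$, i.e.\ $(p\rightarrow x)\rightsquigarrow x\le p=(x\rightarrow y)\rightsquigarrow y$. Now I separately bound $(p\rightarrow x)\rightsquigarrow x$ from below by $(y\rightarrow x)\rightsquigarrow x$: since $y\le p$ (from $(psBCK_2)$, $y\le(x\rightarrow y)\rightsquigarrow y$), Lemma \ref{ps-eq-40}(2) gives $p\rightarrow x\le y\rightarrow x$, and applying Lemma \ref{ps-eq-40}(2) again, $(y\rightarrow x)\rightsquigarrow x\le(p\rightarrow x)\rightsquigarrow x$. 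Chaining, $(y\rightarrow x)\rightsquigarrow x\le(x\rightarrow y)\rightsquigarrow y$. Symmetrically $(y\rightsquigarrow x)\rightarrow x\le(x\rightsquigarrow y)\rightarrow y$ from $(cds_2)$. By Proposition \ref{comm-ds-30}, $A$ is commutative.

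The main obstacle — really just bookkeeping — is tracking which of the two "arrows" appears where and getting the substitutions into $(cds_1)$/$(cds_2)$ oriented correctly, since the antecedents of those conditions involve $y\rightarrow x$ (resp.\ $y\rightsquigarrow x$) lying in $D$ and one must feed in an inequality of the form $x\le y$ in the right slot. Once the substitution $x\mapsto(x\rightarrow y)\rightsquigarrow y$, $y\mapsto x$ is identified (and its mirror for $\rightsquigarrow$), the proof reduces to two applications of Lemma \ref{ps-eq-40}(2) plus an invocation of Proposition \ref{comm-ds-30}, with no genuine computation. I would present the converse in full and dispatch the forward direction in a single sentence, noting it is an immediate instance of the definition.
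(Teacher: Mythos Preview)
Your forward direction contains a genuine error. From $y\rightarrow x=1$ you conclude $x\le y$, but the order is defined by $x\le y$ iff $x\rightarrow y=1$, so $y\rightarrow x=1$ means $y\le x$, not $x\le y$. With the wrong inequality you then simplify $(x\rightarrow y)\rightsquigarrow y$ to $y$ (using $x\rightarrow y=1$, which does \emph{not} follow from the hypothesis) and arrive at the false conclusion that commutativity is not needed for this direction. That conclusion directly contradicts the theorem: if $\{1\}$ were a commutative deductive system in every pseudo BCK-algebra, the equivalence would be vacuous. The fix is short once the inequality is read correctly: from $y\le x$ and commutativity, Theorem~\ref{comm-ds-60}(d) (with the roles of $x$ and $y$ swapped) gives $x=(x\rightarrow y)\rightsquigarrow y$, whence $((x\rightarrow y)\rightsquigarrow y)\rightarrow x=x\rightarrow x=1$; the dual is analogous. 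This is essentially the paper's argument, which uses Theorem~\ref{comm-ds-60}(b) to write $y\rightarrow x=((x\rightarrow y)\rightsquigarrow y)\rightarrow x$ directly.

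Your converse is correct and takes a somewhat different path from the paper. The paper verifies condition (d) of Theorem~\ref{comm-ds-60}: given $y\le x$, it applies $(cds_1)$ once to get $(x\rightarrow y)\rightsquigarrow y\le x$, and combines this with $x\le(x\rightarrow y)\rightsquigarrow y$ from $(psBCK_2)$ to obtain equality. Your argument instead establishes $(y\rightarrow x)\rightsquigarrow x\le(x\rightarrow y)\rightsquigarrow y$ for \emph{all} $x,y$ via the substitution $x\mapsto p=(x\rightarrow y)\rightsquigarrow y$, $y\mapsto x$ in $(cds_1)$, two applications of Lemma~\ref{ps-eq-40}(2), and Proposition~\ref{comm-ds-30}. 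Both work; the paper's route is shorter and avoids the intermediate bound through $(p\rightarrow x)\rightsquigarrow x$.
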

\begin{proof}
Assume that $A$ is commutative, thus $y\rightarrow x=((x\rightarrow y)\rightsquigarrow y)\rightarrow x$, 
for all $x, y \in A$. 
Consider $x, y\in A$ such that $y\rightarrow x\in \{1\}$. It follows that 
$((x\rightarrow y)\rightsquigarrow y)\rightarrow x\in \{1\}$. 
Similarly from $y\rightsquigarrow x\in \{1\}$ we get 
$((x\rightsquigarrow y)\rightarrow y)\rightsquigarrow x\in \{1\}$. 
Thus $\{1\}$ is a commutative deductive system of $A$. 
Conversely, assume that $\{1\}$ is a commutative deductive system of $A$ and let $x, y\in A$ such that $y\le x$, 
that is $y\rightarrow x=1\in \{1\}$. 
Since $\{1\}$ is commutative, it follows that $((x\rightarrow y)\rightsquigarrow y)\rightarrow x\in \{1\}$, 
that is $((x\rightarrow y)\rightsquigarrow y)\rightarrow x=1$. 
On the other hand $x\rightarrow ((x\rightarrow y)\rightsquigarrow y)=1$, 
hence $x=(x\rightarrow y)\rightsquigarrow y$. 
Applying Theorem \ref{comm-ds-60} it follows that $A$ is commutative.  
\end{proof}

\begin{cor} \label{comm-ds-140} A pseudo BCK-algebra $A$ is commutative if and only if 
${\mathcal DS}(A)={\mathcal DS}_c(A)$.
\end{cor}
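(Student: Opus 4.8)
The plan is to obtain this statement directly by packaging together the two results immediately preceding it, namely Theorem \ref{comm-ds-130} and Corollary \ref{comm-ds-120} (equivalently, Proposition \ref{comm-ds-110}). No new computation with the arrows is needed; the whole content is a chaining of equivalences through the distinguished deductive system $\{1\}$.

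For the forward implication I would assume $A$ is commutative. By Theorem \ref{comm-ds-130} this yields $\{1\}\in {\mathcal DS}_c(A)$. Since $\{1\}$ is the least deductive system of $A$, every $E\in {\mathcal DS}(A)$ satisfies $\{1\}\subseteq E$, so Proposition \ref{comm-ds-110} applied with $D=\{1\}$ gives $E\in {\mathcal DS}_c(A)$. Hence ${\mathcal DS}(A)\subseteq {\mathcal DS}_c(A)$, and because the reverse inclusion ${\mathcal DS}_c(A)\subseteq {\mathcal DS}(A)$ holds by definition of a commutative deductive system, we conclude ${\mathcal DS}(A)={\mathcal DS}_c(A)$. (Alternatively one can simply cite Corollary \ref{comm-ds-120} at this point.)

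For the converse I would assume ${\mathcal DS}(A)={\mathcal DS}_c(A)$. Since $\{1\}\in {\mathcal DS}(A)$ always holds, the hypothesis forces $\{1\}\in {\mathcal DS}_c(A)$, and then Theorem \ref{comm-ds-130} immediately gives that $A$ is commutative.

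The main point to be careful about — rather than a genuine obstacle — is to invoke that $\{1\}$ is contained in every deductive system, so that Proposition \ref{comm-ds-110} transports commutativity of $\{1\}$ to all of ${\mathcal DS}(A)$; once that observation is in place the proof is a two-line consequence of the earlier results.
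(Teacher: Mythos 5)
Your proposal is correct and matches the paper's intended argument exactly: the corollary is obtained by chaining Theorem \ref{comm-ds-130} with Corollary \ref{comm-ds-120} (itself a consequence of Proposition \ref{comm-ds-110} applied to $D=\{1\}$), precisely as you describe.
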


\begin{theo} \label{comm-ds-150} Let $A$ be a pseudo BCK-algebra and $H\in {\mathcal DS}_n(A)$. 
Then $H\in {\mathcal DS}_c(A)$ if and only if $X/H$ is a commutative pseudo BCK-algebra.
\end{theo}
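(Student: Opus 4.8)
The plan is to work through the canonical projection $\pi_H\colon A\to A/H$, which is a surjective homomorphism with $\Ker(\pi_H)=H$, and to translate back and forth between membership statements in $H$ and equalities in $A/H$. The basic dictionary I will use repeatedly is that, for $a,b\in A$, one has $a/H\to b/H=(a\to b)/H$ and $a/H\rightsquigarrow b/H=(a\rightsquigarrow b)/H$ (since $\pi_H$ is a homomorphism), and $(a\to b)/H=1/H$ iff $a\to b\in H$ (because $H$ is normal and $H=[1]_{\Theta_H}$); in particular $a/H\le b/H$ iff $a\to b\in H$. The commutativity of $A/H$ will be detected via Proposition \ref{comm-ds-20} and via condition $(d)$ of Theorem \ref{comm-ds-60}.

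For the implication ``$A/H$ commutative $\Rightarrow H\in{\mathcal DS}_c(A)$'', I would verify $(cds_1)$ and $(cds_2)$ directly. Given $x,y\in A$ with $y\to x\in H$, we have $y/H\le x/H$. Since $\pi_H$ is a homomorphism, $\big((x\to y)\rightsquigarrow y\big)/H=(x/H\to y/H)\rightsquigarrow(y/H)$, and by Proposition \ref{comm-ds-20} this equals $x/H\vee y/H=x/H$, the last step because $y/H\le x/H$. Hence $\big((x\to y)\rightsquigarrow y\big)/H=x/H$, so $\big((x\to y)\rightsquigarrow y\big)\to x\in H$; this is $(cds_1)$, and $(cds_2)$ follows symmetrically from the $\rightsquigarrow$-clause of Proposition \ref{comm-ds-20}. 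Thus $H\in{\mathcal DS}_c(A)$.

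For the converse, suppose $H\in{\mathcal DS}_c(A)$; I would show $A/H$ is commutative by checking that it satisfies condition $(d)$ of Theorem \ref{comm-ds-60}. Let $\bar x\le\bar y$ in $A/H$ and, using surjectivity of $\pi_H$, write $\bar x=x/H$, $\bar y=y/H$, so that $x\to y\in H$. By $(psBCK_2)$ we have $y\le(y\to x)\rightsquigarrow x$, whence $y\to\big((y\to x)\rightsquigarrow x\big)=1\in H$. On the other hand, applying $(cds_1)$ with its two universally quantified variables instantiated to $y$ and $x$ (so that its hypothesis becomes $x\to y\in H$), we obtain $\big((y\to x)\rightsquigarrow x\big)\to y\in H$. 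Therefore $\big(y,(y\to x)\rightsquigarrow x\big)\in\Theta_H$, i.e. $\bar y=y/H=\big((y\to x)\rightsquigarrow x\big)/H=(\bar y\to\bar x)\rightsquigarrow\bar x$. In exactly the same way, using the second clause of $(psBCK_2)$ together with $(cds_2)$, one gets $\bar y=(\bar y\rightsquigarrow\bar x)\to\bar x$. Hence $A/H$ satisfies $(d)$ of Theorem \ref{comm-ds-60}, so $A/H$ is commutative.

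I do not anticipate a real obstacle: each direction is short once the correspondence ``membership in $H$ $\leftrightarrow$ equality in $A/H$'' is in place. The only delicate point is the precise matching of the bound variables in $(cds_1)$ and $(cds_2)$ with the terms $\big((y\to x)\rightsquigarrow x\big)\to y$, and the remark that one half of the needed congruence, namely $y\le(y\to x)\rightsquigarrow x$, is free from $(psBCK_2)$, so that the commutativity of $H$ is used only to supply the other half.
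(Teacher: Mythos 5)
Your proof is correct and rests on the same mechanism as the paper's: translating between membership in the normal deductive system $H$ and equalities in $A/H$ via the congruence $\Theta_H$. The only cosmetic difference is that the paper routes both implications through Theorem \ref{comm-ds-130} applied to $A/H$ (i.e.\ through the commutativity of the trivial deductive system $[1]_{\Theta_H}$), whereas you inline that step by invoking Proposition \ref{comm-ds-20} in one direction and condition $(d)$ of Theorem \ref{comm-ds-60} in the other; the content is the same.
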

\begin{proof}
Assume $H\in {\mathcal DS}_c(A)$ and $x, y\in A$ such that $[y]_{\Theta_H}\rightarrow [x]_{\Theta_H}=[1]_{\Theta_H}$, 
so $[y\rightarrow x]_{\Theta_H}=[1]_{\Theta_H}$, that is $y\rightarrow x\in H$. 
Since $H$ is commutative, we get $((x\rightarrow y)\rightsquigarrow y)\rightarrow x\in H$, thus 
$(([x]_{\Theta_H}\rightarrow [y]_{\Theta_H})\rightsquigarrow [y]_{\Theta_H})\rightarrow [x]_{\Theta_H}= 
[((x\rightarrow y)\rightsquigarrow y)\rightarrow x]_{\Theta_H}=[1]_{\Theta_H}$. 
Hence $[1]_{\Theta_H}\in {\mathcal DS}_c(A/H)$, so $X/H$ is a commutative pseudo BCK-algebra. \\
Conversely, if $X/H$ is a commutative pseudo BCK-algebra then $[1]_{\Theta_H}\in {\mathcal DS}_c(A/H)$. 
If $y\rightarrow x\in H=[1]_{\Theta_H}$, we have $[y]_{\Theta_H}\rightarrow [x]_{\Theta_H}\in [1]_{\Theta_H}$. 
Since $[1]_{\Theta_H}$ is commutative, we get 
$(([x]_{\Theta_H}\rightarrow [y]_{\Theta_H})\rightsquigarrow [y]_{\Theta_H})\rightarrow [x]_{\Theta_H}=
[1]_{\Theta_H}$, so $[((x\rightarrow y)\rightsquigarrow y)\rightarrow x]_{\Theta_H}=[1]_{\Theta_H}$, that is 
$((x\rightarrow y)\rightsquigarrow y)\rightarrow x\in H$. 
Similarly from $y\rightsquigarrow x\in H$, we get 
$((x\rightsquigarrow y)\rightarrow y)\rightsquigarrow x\in H$. 
Hence $H\in {\mathcal DS}_c(A)$.
\end{proof}

$\vspace*{5mm}$

\section{State pseudo BCK-algebras}

Similarly as in \cite{Bor1} for the case of BCK-algebras, in this section we introduce the notions of state operators of type I and type II on pseudo BCK-algebras, and we apply the results proved in the previous sections to 
investigate the properties of state  operators. 
For the case of commutative pseudo BCK-algebras we show that the state operators of type I and II coincide. 
We show that the kernel of a type II state operator is a commutative deductive system. 
We define the notion of a normal state operator, proving that any type I or type II state operator on a 
commutative pseudo BCK-algebra is normal. 
We also prove that any normal type II state operator is a normal type I state operator, while a normal 
type I state operator is a normal type II state operator if its kernel is a commutative deductive system. 
 
\begin{Def} \label{st-psBCK-10} Let $(A, \rightarrow, \rightsquigarrow, 1)$ be a pseudo BCK-algebra   
and $\mu:A \longrightarrow A$ be a unary operator on $A$. For all $x, y\in A$ consider the following axioms:\\
$(IS_1)$ $\mu(x)\le \mu(y)$, whenever $x\le y,$ \\
$(IS_2)$ $\mu(x\rightarrow y)=\mu((x\rightarrow y)\rightsquigarrow y)\rightarrow \mu(y)$ and 
         $\mu(x\rightsquigarrow y)=\mu((x\rightsquigarrow y)\rightarrow y)\rightsquigarrow \mu(y),$ \\
$(IS^{'}_2)$ $\mu(x\rightarrow y)=\mu((y\rightarrow x)\rightsquigarrow x)\rightarrow \mu(y)$ and
             $\mu(x\rightsquigarrow y)=\mu((y\rightsquigarrow x)\rightarrow x)\rightsquigarrow \mu(y),$ \\ 
$(IS_3)$ $\mu(\mu(x)\rightarrow \mu(y))=\mu(x)\rightarrow \mu(y)$ and 
         $\mu(\mu(x)\rightsquigarrow \mu(y))=\mu(x)\rightsquigarrow \mu(y)$. \\
Then: \\
$(i)$ $\mu$ is called an \emph{internal state of type I} or a \emph{state operator of type I} or 
a \emph{type I state operator} if it satisfies axioms $(IS_1)$, $(IS_2)$, $(IS_3);$ \\    
$(ii)$ $\mu$ is called an \emph{internal state of type II} or a \emph{state operator of type II} or a 
\emph{type II state operator} if it satisfies axioms $(IS_1)$, $(IS^{'}_2)$, $(IS_3)$. \\
The structure $(A, \rightarrow, \rightsquigarrow, \mu, 1)$ ($(A,\mu)$, for short) is called a 
\emph{state pseudo BCK-algebra of type I (type II) state pseudo BCK-algebra}, respectively.
\end{Def}

Denote $\mathcal{IS}^{(I)}(A)$ and $\mathcal{IS}^{(II)}(A)$ the set of all internal states of 
type I and II on a pseudo BCK-algebra $A$, respectively. \\
For $\mu \in \mathcal{IS}^{(I)}(A)$ or $\mu \in \mathcal{IS}^{(II)}(A)$, 
$\Ker(\mu)=\{x\in A \mid \mu(x)=1\}$  is called the \emph{kernel} of $\mu$. \\
Note that the type I and II state operators are called in \cite{Bor1} left and right state operators, 
respectively. 

\begin{ex} \label{st-psBCK-10-20} Let $(A, \rightarrow, \rightsquigarrow, 1)$ be a pseudo BCK-algebra and 
$1_A, Id_A:A\longrightarrow A$, defined by $1_A(x)=1$ and $Id_A(x)=x$ for all $x\in A$. 
Then $1_A, Id_A\in \mathcal{IS}^{(I)}(A)$ and $1_A\in \mathcal{IS}^{(II)}(A)$. 
\end{ex}

\begin{ex} \label{st-psBCK-10-30} Let $(A_1, \rightarrow_1, \rightsquigarrow_1, 1_1)$ and 
$(A_2, \rightarrow_2, \rightsquigarrow_2, 1_2)$ be two pseudo BCK-algebras.
Denote $A=A_1 \times A_2=\{(x_1,x_2) \mid x_1\in A_1, x_2\in A_2\}$ and for all $(x_1, x_2), (y_1, y_2)\in A$, 
define the operations $\rightarrow, \rightsquigarrow, 1$ as follows: 
$(x_1, x_2)\rightarrow (y_1, y_2)=(x_1\rightarrow_1 y_1, x_2\rightarrow_2 y_2)$, 
$(x_1, x_2)\rightsquigarrow (y_1, y_2)=(x_1\rightsquigarrow_1 y_1, x_2\rightsquigarrow_2 y_2)$, 
$1=(1_1, 1_2)$. 
Obviously $(A, \rightarrow, \rightsquigarrow, 1)$ is a pseudo BCK-algebra. 
Consider $\mu_1\in \mathcal{IS}^{(I)}(A_1)$, $\mu_2\in \mathcal{IS}^{(I)}(A_2)$ and define the map $\mu:A\longrightarrow A$ by $\mu((x,y))=(\mu_1(x),\mu_2(x))$. Then $\mu\in \mathcal{IS}^{(I)}(A)$.  
Similarly if $\mu_1\in \mathcal{IS}^{(II)}(A_1)$, $\mu_2\in \mathcal{IS}^{(II)}(A_2)$, 
then $\mu\in \mathcal{IS}^{(II)}(A)$.
\end{ex}

\begin{prop} \label{st-psBCK-10-40}
A pseudo BCK-algebra $A$ is commutative if and only if $\mathcal{IS}^{(I)}(A)=\mathcal{IS}^{(II)}(A)$. 
\end{prop}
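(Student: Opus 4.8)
The plan is to prove both implications by directly comparing axioms $(IS_2)$ and $(IS_2')$ in light of the commutativity identities from Definition \ref{comm-ds-10}. First I would establish the easy direction: if $A$ is commutative, then for all $x,y\in A$ we have $(x\rightarrow y)\rightsquigarrow y=(y\rightarrow x)\rightsquigarrow x$ and $(x\rightsquigarrow y)\rightarrow y=(y\rightsquigarrow x)\rightarrow x$, so the right-hand sides of $(IS_2)$ and $(IS_2')$ are literally equal term-by-term; hence a unary operator $\mu$ satisfies $(IS_2)$ if and only if it satisfies $(IS_2')$, and since $(IS_1)$ and $(IS_3)$ are common to both definitions, we get $\mathcal{IS}^{(I)}(A)=\mathcal{IS}^{(II)}(A)$.

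For the converse, I would argue contrapositively: assume $A$ is not commutative and exhibit an element of $\mathcal{IS}^{(I)}(A)$ that is not in $\mathcal{IS}^{(II)}(A)$ (or vice versa). The natural candidate is the identity operator $Id_A$, which by Example \ref{st-psBCK-10-20} always lies in $\mathcal{IS}^{(I)}(A)$. For $Id_A$, axiom $(IS_2')$ reads $x\rightarrow y=((y\rightarrow x)\rightsquigarrow x)\rightarrow y$ and $x\rightsquigarrow y=((y\rightsquigarrow x)\rightarrow x)\rightsquigarrow y$ for all $x,y\in A$ — which is precisely condition $(b)$ of Theorem \ref{comm-ds-60}. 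By that theorem, $(b)$ holds if and only if $A$ is commutative. So if $A$ is not commutative, $Id_A\notin\mathcal{IS}^{(II)}(A)$ while $Id_A\in\mathcal{IS}^{(I)}(A)$, forcing $\mathcal{IS}^{(I)}(A)\neq\mathcal{IS}^{(II)}(A)$.

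The main step requiring care is verifying that $Id_A$ does satisfy $(IS_1)$ and $(IS_3)$ unconditionally (so that whether it belongs to $\mathcal{IS}^{(II)}(A)$ hinges solely on $(IS_2')$): $(IS_1)$ is immediate since $Id_A$ preserves $\le$ trivially, and $(IS_3)$ becomes $x\rightarrow y=x\rightarrow y$, again trivial. The only genuine obstacle is the precise bookkeeping that $(IS_2')$ specialized to $\mu=Id_A$ coincides exactly with clause $(b)$ of Theorem \ref{comm-ds-60}, including both the $\rightarrow$ and $\rightsquigarrow$ halves — but this is a syntactic match, not a computation. I would also note in passing that the same argument works symmetrically: $Id_A$ satisfies $(IS_2)$ automatically, which is why $Id_A$ always lies in $\mathcal{IS}^{(I)}(A)$ regardless of commutativity, so there is no analogous "always-member" obstruction on the type II side that could spoil the contrapositive. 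This completes the proof.
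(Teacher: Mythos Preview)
Your proof is correct and follows essentially the same approach as the paper: the forward direction uses the commutativity identities to identify $(IS_2)$ with $(IS_2')$, and the converse pivots on $Id_A\in\mathcal{IS}^{(I)}(A)$ together with Theorem~\ref{comm-ds-60}(b) to force commutativity. The only cosmetic difference is that you frame the converse contrapositively, whereas the paper argues it directly by assuming $\mathcal{IS}^{(I)}(A)=\mathcal{IS}^{(II)}(A)$ and concluding $Id_A\in\mathcal{IS}^{(II)}(A)$.
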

\begin{proof}
It is clear that if $A$ is commutative then $\mathcal{IS}^{(I)}(A)=\mathcal{IS}^{(II)}(A)$. 
Conversely, suppose that $\mathcal{IS}^{(I)}(A)=\mathcal{IS}^{(II)}(A)$. 
Since $Id_A\in \mathcal{IS}^{(I)}(A)$, we have $Id_A\in \mathcal{IS}^{(II)}(A)$, so 
$x\rightarrow y=((y\rightarrow x)\rightsquigarrow x)\rightarrow y$ and 
$x\rightsquigarrow y=((y\rightsquigarrow x)\rightarrow x)\rightsquigarrow y$, for all $x, y\in A$.  
According to Theorem \ref{comm-ds-60}, it follows that $A$ is a commutative pseudo BCK-algebra. 
\end{proof}

\begin{prop} \label{st-psBCK-20} Let $(A, \rightarrow, \rightsquigarrow, \mu, 1)$ be a type I or a type II state 
pseudo BCK-algebra. Then the following hold:\\ 
$(1)$ $\mu(1)=1;$ \\
$(2)$ $\mu(\mu(x))=\mu(x)$, for all $x\in A;$ \\
$(3)$ $\mu(x\rightarrow y)\le \mu(x)\rightarrow \mu(y)$ and 
      $\mu(x\rightsquigarrow y)\le \mu(x)\rightsquigarrow \mu(y)$, for all $x, y\in A;$ \\
$(4)$ $\Ker(\mu)\in {\mathcal DS}(A);$ \\
$(5)$ $\Img(\mu)$ is a subalgebra of $A;$ \\ 
$(6)$ $\Img(\mu)=\{x\in A \mid x=\mu(x)\};$ \\
$(7)$ $\Ker(\mu)\cap \Img(\mu)=\{1\};$ \\
$(8)$ if $A$ is commutative then 
         $\mu(x\rightarrow y)\rightsquigarrow \mu(y)=\mu(x\rightsquigarrow y)\rightarrow \mu(y)$, 
         for all $x, y\in A$. 
\end{prop}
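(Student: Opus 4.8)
The plan is to verify the eight items in roughly the stated order, since each later item tends to lean on the earlier ones, and to treat the type I and type II cases uniformly wherever the axioms $(IS_2)$ and $(IS_2')$ enter only through their common consequences. For $(1)$, I would apply $(IS_2)$ (resp. $(IS_2')$) with $x=y=1$: since $1\rightarrow 1=1$ and $(psBCK_5')$ gives the needed collapses, one gets $\mu(1)=\mu(1)\rightarrow\mu(1)=1$ by $(Y_3)$-type reasoning (using that $A$ is a pseudo BCK-algebra, $1=\mu(1)\rightarrow\mu(1)$). For $(2)$, apply $(IS_3)$ with $y=1$ and use $(1)$ together with $1\rightsquigarrow\mu(x)=\mu(x)$ from $(psBCK_4')$; alternatively set $x=1$ in $(IS_3)$. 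For $(3)$, the inequality $\mu(x\rightarrow y)\le\mu(x)\rightarrow\mu(y)$ should follow from $(IS_1)$ applied to the order relation $x\rightarrow y\le$ (something), but more directly: from $(IS_2)$, $\mu(x\rightarrow y)=\mu((x\rightarrow y)\rightsquigarrow y)\rightarrow\mu(y)$, and since $x\le (x\rightarrow y)\rightsquigarrow y$ by $(psBCK_2)$ we get $\mu(x)\le\mu((x\rightarrow y)\rightsquigarrow y)$ by $(IS_1)$, hence by Lemma \ref{ps-eq-40}$(2)$ the arrow reverses monotonicity and $\mu((x\rightarrow y)\rightsquigarrow y)\rightarrow\mu(y)\le\mu(x)\rightarrow\mu(y)$; the type II case is symmetric using $x\le(y\rightarrow x)\rightsquigarrow x$ from Lemma \ref{ps-eq-40}$(5)$ wait — rather from $(psBCK_2)$-analogue, one uses that $x\le (x\rightarrow y)\rightsquigarrow y$ still, combined with $(IS_2')$ written via commutativity-free manipulation; I would double-check which monotonicity is needed and cite Lemma \ref{ps-eq-40}$(1)$--$(2)$ accordingly.

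For $(4)$, I would check the two defining conditions of a deductive system: $1\in\Ker(\mu)$ is immediate from $(1)$, and if $x\in\Ker(\mu)$ and $x\rightarrow y\in\Ker(\mu)$ then by $(3)$ we have $1=\mu(x\rightarrow y)\le\mu(x)\rightarrow\mu(y)=1\rightarrow\mu(y)=\mu(y)$ using $(psBCK_4')$, so $\mu(y)=1$, i.e. $y\in\Ker(\mu)$. For $(5)$, I must show $\Img(\mu)$ is closed under $\rightarrow$ and $\rightsquigarrow$ and contains $1$: the containment of $1$ is $(1)$, and closure is exactly $(IS_3)$, which says $\mu(x)\rightarrow\mu(y)=\mu(\mu(x)\rightarrow\mu(y))\in\Img(\mu)$. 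For $(6)$, the inclusion $\{x\mid x=\mu(x)\}\subseteq\Img(\mu)$ is trivial, and conversely if $x=\mu(z)$ then $\mu(x)=\mu(\mu(z))=\mu(z)=x$ by $(2)$. For $(7)$, if $x\in\Ker(\mu)\cap\Img(\mu)$ then $x=\mu(x)=1$ by $(6)$ and the definition of the kernel.

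The substantive step is $(8)$. Here I would assume $A$ commutative and aim to show $\mu(x\rightarrow y)\rightsquigarrow\mu(y)=\mu(x\rightsquigarrow y)\rightarrow\mu(y)$. By Proposition \ref{st-psBCK-10-40}, on a commutative $A$ the type I and type II conditions coincide, so $\mu$ satisfies both $(IS_2)$ and $(IS_2')$ simultaneously; comparing the first halves of $(IS_2)$ and $(IS_2')$ and using commutativity $(x\rightarrow y)\rightsquigarrow y=(y\rightarrow x)\rightsquigarrow x$ gives compatible expressions for $\mu(x\rightarrow y)$. The idea is then to compute $\mu(x\rightarrow y)\rightsquigarrow\mu(y)$ by substituting the $(IS_2)$-expression and invoking Lemma \ref{ps-eq-40}$(6)$ (the identity $((a\rightarrow b)\rightsquigarrow b)\rightarrow b=a\rightarrow b$ and its mirror) applied inside $\Img(\mu)$, which is a subalgebra by $(5)$ and hence itself a pseudo BCK-algebra in which one may further exploit commutativity inherited from $A$. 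Concretely I expect: $\mu(x\rightarrow y)\rightsquigarrow\mu(y)=\bigl(\mu((x\rightarrow y)\rightsquigarrow y)\rightarrow\mu(y)\bigr)\rightsquigarrow\mu(y)$, and since all three arguments lie in $\Img(\mu)$ and $\Img(\mu)$ is commutative, this equals $\mu(y)\vee\mu((x\rightarrow y)\rightsquigarrow y)$ computed as a join in $\Img(\mu)$; doing the same for $\mu(x\rightsquigarrow y)\rightarrow\mu(y)$ and using commutativity of $A$ to identify $(x\rightarrow y)\rightsquigarrow y=(x\rightsquigarrow y)\rightarrow y$ yields the same join, proving equality. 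The main obstacle is bookkeeping: making sure every intermediate term genuinely lies in $\Img(\mu)$ so that the commutative-pseudo-BCK identities (Proposition \ref{comm-ds-20}, Lemma \ref{ps-eq-40}$(6)$) are legitimately applicable there, and handling the two "arrow directions" without sign errors.
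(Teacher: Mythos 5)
Your proposal is correct and follows essentially the same route as the paper: items $(1)$--$(7)$ match the paper's proof line for line (with the caveat that for $(2)$ it is your ``alternatively set $x=1$ in $(IS_3)$'' that is the working version, not the $y=1$ substitution), and your computation in $(8)$ is a harmless variant of the paper's, which evaluates $\mu(x\vee y)=\mu((x\rightarrow y)\rightsquigarrow y)=\mu((x\rightsquigarrow y)\rightarrow y)$ in two ways via $(IS_2)$ and Lemma \ref{ps-eq-40}$(6)$, whereas you rewrite both target expressions as the join $\mu((x\rightarrow y)\rightsquigarrow y)\vee\mu(y)$ --- and here your worry about staying inside $\Img(\mu)$ is unnecessary, since $A$ itself is commutative and the join identities hold throughout $A$. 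The one place you hesitate, the type II case of $(3)$, resolves in favor of your first instinct: $x\le(y\rightarrow x)\rightsquigarrow x$ is an instance of Lemma \ref{ps-eq-40}$(5)$, and combined with $(IS_1)$ and Lemma \ref{ps-eq-40}$(2)$ it gives $\mu((y\rightarrow x)\rightsquigarrow x)\rightarrow\mu(y)\le\mu(x)\rightarrow\mu(y)$, which is exactly what $(IS'_2)$ needs; your second thought (reusing $x\le(x\rightarrow y)\rightsquigarrow y$) points at the wrong term and should be discarded.
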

\begin{proof}
$(1)$ From $(IS_2)$ and $(IS^{'}_2)$ for $x=y=1$ we get $\mu(1)=\mu(1)\rightarrow \mu(1)=1$. \\
$(2)$ Applying $(1)$ and $(IS_3)$ we have: \\
$\hspace*{1cm}$
$\mu(\mu(x))=\mu(1\rightarrow \mu(x))=\mu(\mu(1)\rightarrow \mu(x))=
\mu(1)\rightarrow \mu(x)=1\rightarrow \mu(x)=\mu(x)$. \\
$(3)$ If $\mu\in \mathcal{IS}^{(I)}(A)$ then from $x\le (x\rightarrow y)\rightsquigarrow y$ we 
get $\mu(x)\le \mu((x\rightarrow y)\rightsquigarrow y)$, so 
$\mu((x\rightarrow y)\rightsquigarrow y)\rightarrow \mu(y)\le \mu(x)\rightarrow \mu(y)$, that is 
$\mu(x\rightarrow y)\le \mu(x)\rightarrow \mu(y)$. \\
Similarly $\mu(x\rightsquigarrow y)\le \mu(x)\rightsquigarrow \mu(y)$. \\
If $\mu\in \mathcal{IS}^{(II)}(A)$ then from $x\le (y\rightarrow x)\rightsquigarrow x$ we have 
$\mu(x)\le \mu((y\rightarrow x)\rightsquigarrow x)$ and in a similar way we get 
$\mu(x\rightarrow y)\le \mu(x)\rightarrow \mu(y)$ and 
$\mu(x\rightsquigarrow y)\le \mu(x)\rightsquigarrow \mu(y)$. \\
$(4)$ Consider $x, x\rightarrow y\in \Ker(\mu)$, that is $\mu(x)=\mu(x\rightarrow y)=1$. \\
Applying $(3)$ we have $1=\mu(x\rightarrow y)\le \mu(x)\rightarrow \mu(y)$, so $\mu(x)\rightarrow \mu(y)=1$.  \\
It follows that $\mu(y)=1\rightarrow \mu(y)=\mu(1)\rightarrow \mu(y)=\mu(x)\rightarrow \mu(y)=1$, 
hence  $y\in \Ker(\mu)$. \\
Since $1\in \Ker(\mu)$, it follows that $\Ker(\mu)\in {\mathcal DS}(A)$. \\
$(5)$ Since $1=\mu(1)\in \Img(\mu)$, we have $1\in \Img(\mu)$. \\
If $x, y\in \Img(\mu)$ then from $\mu(x)\rightarrow \mu(y)=\mu(\mu(x)\rightarrow \mu(y))$ and  
$\mu(x)\rightsquigarrow \mu(y)=\mu(\mu(x)\rightsquigarrow \mu(y))$, it follows that 
$\mu(x)\rightarrow \mu(y), \mu(x)\rightsquigarrow \mu(y)\in \Img(\mu)$. 
Thus $\Img(\mu)$ is a subalgebra of $A$. \\
$(6)$ Clearly $\{x\in A \mid x=\mu(x)\} \subseteq \Img(\mu)$. 
Let $x\in \Img(\mu)$, that is there exists $x_1\in A$ such that $x=\mu(x_1)$. 
It follows that $x=\mu(x_1)=\mu(\mu(x_1))=\mu(x)$, that is $x\in \Img(\mu)$. \\
Thus $\Img(\mu) \subseteq \{x\in A \mid x=\mu(x)\}$ and we  conclude that $\Img(\mu)=\{x\in A \mid x=\mu(x)\}$. \\
$(7)$ Let $y\in \Ker(\mu)\cap \Img(\mu)$, so $\mu(y)=1$ and there exists $x\in A$ such that $\mu(x)=y$. \\
It follows that $y=\mu(x)=\mu(\mu(x))=\mu(y)=1$, thus $\Ker(\mu)\cap \Img(\mu)=\{1\}$. \\ 
$(8)$ If $A$ is commutative then it is is a join-semilattice, where 
$x\vee y=(x\rightarrow y)\rightsquigarrow y=(x\rightsquigarrow y)\rightarrow y$. 
Applying Lemma \ref{ps-eq-40}$(6)$ we have: \\
$\hspace*{1cm}$
$\mu(x\vee y)=\mu((x\rightarrow y)\rightsquigarrow y)=
              \mu(((x\rightarrow y)\rightsquigarrow y)\rightarrow y)\rightsquigarrow \mu(y)=
              \mu(x\rightarrow y)\rightsquigarrow \mu(y)$ and \\
$\hspace*{1cm}$
$\mu(x\vee y)=\mu((x\rightsquigarrow y)\rightarrow y)=
              \mu(((x\rightsquigarrow y)\rightarrow y)\rightsquigarrow y)\rightarrow \mu(y)=
              \mu(x\rightsquigarrow y)\rightarrow \mu(y)$. \\              
Hence $\mu(x\rightarrow y)\rightsquigarrow \mu(y)=\mu(x\rightsquigarrow y)\rightarrow \mu(y)$.               
\end{proof}

\begin{prop} \label{st-psBCK-20-10} Let $(A, \rightarrow, \rightsquigarrow, \mu, 1)$ be a commutative 
type I state pseudo BCK-algebra. Then $\Ker(\mu)\in {\mathcal DS}_c(A)$.  
\end{prop}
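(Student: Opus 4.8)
The plan is to reduce the statement to results already proved. By Proposition~\ref{st-psBCK-20}$(4)$, $\Ker(\mu)$ is a deductive system of $A$. Since $A$ is commutative, Corollary~\ref{comm-ds-140} says that ${\mathcal DS}(A)={\mathcal DS}_c(A)$, i.e.\ \emph{every} deductive system of a commutative pseudo BCK-algebra is already commutative; hence $\Ker(\mu)\in{\mathcal DS}_c(A)$. This is the shortest route and uses nothing about $\mu$ beyond Proposition~\ref{st-psBCK-20}$(4)$.

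If one prefers a proof that actually exercises the state-operator axioms (and does not route through Corollary~\ref{comm-ds-140}), one can instead verify conditions $(cds_1)$ and $(cds_2)$ of Definition~\ref{comm-ds-70} directly. For $(cds_1)$, suppose $y\rightarrow x\in\Ker(\mu)$, that is $\mu(y\rightarrow x)=1$. Commutativity of $A$ gives $(x\rightarrow y)\rightsquigarrow y=(y\rightarrow x)\rightsquigarrow x$, and Lemma~\ref{ps-eq-40}$(6)$ applied with $x$ and $y$ interchanged gives $((y\rightarrow x)\rightsquigarrow x)\rightarrow x=y\rightarrow x$. Applying the $\rightsquigarrow$-half of $(IS_2)$ to the pair $(y\rightarrow x,x)$ we obtain
\[
\mu((y\rightarrow x)\rightsquigarrow x)=\mu(((y\rightarrow x)\rightsquigarrow x)\rightarrow x)\rightsquigarrow\mu(x)=\mu(y\rightarrow x)\rightsquigarrow\mu(x)=1\rightsquigarrow\mu(x)=\mu(x).
\]
Using this together with commutativity and Proposition~\ref{st-psBCK-20}$(3)$,
\[
\mu(((x\rightarrow y)\rightsquigarrow y)\rightarrow x)=\mu(((y\rightarrow x)\rightsquigarrow x)\rightarrow x)\le\mu((y\rightarrow x)\rightsquigarrow x)\rightarrow\mu(x)=\mu(x)\rightarrow\mu(x)=1,
\]
so $((x\rightarrow y)\rightsquigarrow y)\rightarrow x\in\Ker(\mu)$.

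Condition $(cds_2)$ is the mirror argument: from $\mu(y\rightsquigarrow x)=1$ one writes $(x\rightsquigarrow y)\rightarrow y=(y\rightsquigarrow x)\rightarrow x$ by commutativity, applies the second identity of Lemma~\ref{ps-eq-40}$(6)$ to get $((y\rightsquigarrow x)\rightarrow x)\rightsquigarrow x=y\rightsquigarrow x$, and then uses the $\rightarrow$-half of $(IS_2)$ and Proposition~\ref{st-psBCK-20}$(3)$ exactly as above to conclude $((x\rightsquigarrow y)\rightarrow y)\rightsquigarrow x\in\Ker(\mu)$. Hence $\Ker(\mu)\in{\mathcal DS}_c(A)$. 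There is no serious obstacle; the only point worth flagging is that Lemma~\ref{ps-eq-40}$(6)$, read with $x$ and $y$ swapped, collapses the nested term $((y\rightarrow x)\rightsquigarrow x)\rightarrow x$ back to $y\rightarrow x$, which is exactly what lets $(IS_2)$ compute $\mu$ of it immediately, and dually for the $\rightsquigarrow$-side.
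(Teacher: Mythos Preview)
Your first paragraph is exactly the paper's proof: the paper simply states that the proposition ``is a consequence of Proposition~\ref{st-psBCK-20}$(4)$ and Corollary~\ref{comm-ds-140}.'' Your alternative direct verification is also correct, though slightly roundabout: once you have $((y\rightarrow x)\rightsquigarrow x)\rightarrow x=y\rightarrow x$ from Lemma~\ref{ps-eq-40}$(6)$ and $(x\rightarrow y)\rightsquigarrow y=(y\rightarrow x)\rightsquigarrow x$ from commutativity, you get $((x\rightarrow y)\rightsquigarrow y)\rightarrow x=y\rightarrow x$ directly, so $\mu$ of it is $1$ without needing the intermediate computation of $\mu((y\rightarrow x)\rightsquigarrow x)$ or the inequality from Proposition~\ref{st-psBCK-20}$(3)$.
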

\begin{proof} It is a consequence of Proposition \ref{st-psBCK-20}$(4)$ and Corollary \ref{comm-ds-140}.
\end{proof}

\begin{prop} \label{st-psBCK-30} Let $(A, \rightarrow, \rightsquigarrow, \mu, 1)$ be a type II pseudo BCK-algebra. 
Then the following hold:\\ 
$(1)$ $y\le x$ implies $\mu(x\rightarrow y)=\mu(x)\rightarrow \mu(y)$ and 
                       $\mu(x\rightsquigarrow y)=\mu(x)\rightsquigarrow \mu(y);$ \\
$(2)$ $x\rightarrow y\in \Ker(\mu)$ iff $((y\rightarrow x)\rightsquigarrow x)\rightarrow y\in \Ker(\mu);$ \\            $(3)$ $x\rightsquigarrow y\in \Ker(\mu)$ iff $((y\rightsquigarrow x)\rightarrow x)\rightsquigarrow y\in \Ker(\mu);$ \\
$(4)$ $\Ker(\mu)\in {\mathcal DS}_c(A)$.      
\end{prop}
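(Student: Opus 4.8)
The plan is to prove (1) directly from axiom $(IS'_2)$, then bootstrap (2) and (3) from (1) together with $(psBCK_2)$, and finally read off (4) by specializing (2) and (3) and invoking Proposition \ref{st-psBCK-20}$(4)$.

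For (1), I would argue as follows. If $y\le x$ then $y\rightarrow x=1$, so $(y\rightarrow x)\rightsquigarrow x=1\rightsquigarrow x=x$ by $(psBCK_4')$; substituting this into the first half of $(IS'_2)$ gives $\mu(x\rightarrow y)=\mu((y\rightarrow x)\rightsquigarrow x)\rightarrow\mu(y)=\mu(x)\rightarrow\mu(y)$. The dual identity follows the same way, using $y\rightsquigarrow x=1$, $(psBCK_3')$ and the second half of $(IS'_2)$. This is essentially a one-line substitution.

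For (2), I would set $u=(y\rightarrow x)\rightsquigarrow x$ and note $y\le u$ by $(psBCK_2)$. By $(IS'_2)$ and the definition of the kernel, $x\rightarrow y\in\Ker(\mu)$ iff $\mu(u)\rightarrow\mu(y)=1$, i.e. iff $\mu(u)\le\mu(y)$. Since $y\le u$, part (1) gives $\mu(u\rightarrow y)=\mu(u)\rightarrow\mu(y)$, so $((y\rightarrow x)\rightsquigarrow x)\rightarrow y=u\rightarrow y\in\Ker(\mu)$ iff $\mu(u)\le\mu(y)$ as well; hence the two membership statements are equivalent. Statement (3) is the mirror image: take $v=(y\rightsquigarrow x)\rightarrow x$, use $y\le v$ from $(psBCK_2)$, and apply the second halves of $(IS'_2)$ and of part (1).

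For (4), $\Ker(\mu)\in{\mathcal DS}(A)$ is already Proposition \ref{st-psBCK-20}$(4)$, so it remains to verify $(cds_1)$ and $(cds_2)$ from Definition \ref{comm-ds-70}. Interchanging $x$ and $y$ in (2) gives precisely $(cds_1)$, namely that $y\rightarrow x\in\Ker(\mu)$ implies $((x\rightarrow y)\rightsquigarrow y)\rightarrow x\in\Ker(\mu)$; interchanging $x$ and $y$ in (3) gives $(cds_2)$. Thus $\Ker(\mu)\in{\mathcal DS}_c(A)$. I do not expect a real obstacle here; the only thing to watch is that the variable swaps in (2)--(4) are carried out consistently, and in particular that $(psBCK_2)$ is being used in the form $y\le(y\rightarrow x)\rightsquigarrow x$ and $y\le(y\rightsquigarrow x)\rightarrow x$, so that part (1) genuinely applies to $u\rightarrow y$ and $v\rightsquigarrow y$.
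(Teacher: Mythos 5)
Your proposal is correct and follows essentially the same route as the paper: part $(1)$ is the same one-line substitution of $(y\rightarrow x)\rightsquigarrow x = 1\rightsquigarrow x = x$ into $(IS'_2)$, and part $(4)$ is obtained exactly as you say, by interchanging $x$ and $y$ in $(2)$ and $(3)$ and citing Proposition \ref{st-psBCK-20}$(4)$. The only (harmless) difference is in the converse half of $(2)$: you obtain both directions at once by reducing each membership statement to the single condition $\mu(u)\le\mu(y)$ via $(IS'_2)$ and part $(1)$, whereas the paper proves the converse separately from $x\le(y\rightarrow x)\rightsquigarrow x$, antitonicity in the first argument, and upward closure of the deductive system $\Ker(\mu)$.
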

\begin{proof}
$(1)$ If $y\le x$, we have:\\
$\hspace*{1cm}$
$\mu(x\rightarrow y)=\mu((y\rightarrow x)\rightsquigarrow x)\rightarrow \mu(y)=
\mu(1\rightsquigarrow x)\rightarrow \mu(y)=\mu(x)\rightarrow \mu(y)$. \\  
Similarly $\mu(x\rightsquigarrow y)=\mu(x)\rightsquigarrow \mu(y)$. \\
$(2)$ Suppose $x\rightarrow y\in \Ker(\mu)$, that is $\mu(x\rightarrow y)=1$. 
Since $y\le (y\rightarrow x)\rightsquigarrow x$, applying $(1)$ we get   
$\mu(((y\rightarrow x)\rightsquigarrow x)\rightarrow y)=\mu((y\rightarrow x)\rightsquigarrow x)\rightarrow \mu(y)$, 
hence \\
$1=\mu(x\rightarrow y)=\mu((y\rightarrow x)\rightsquigarrow x)\rightarrow \mu(y)=
\mu(((y\rightarrow x)\rightsquigarrow x)\rightarrow y)$. \\ 
Thus $((y\rightarrow x)\rightsquigarrow x)\rightarrow y\in \Ker(\mu)$. \\ 
Conversely, suppose $((y\rightarrow x)\rightsquigarrow x)\rightarrow y\in \Ker(\mu)$. \\
From $x\le (y\rightarrow x)\rightsquigarrow x$ we have
$((y\rightarrow x)\rightsquigarrow x) \rightarrow y\le x\rightarrow y$. \\
Since $\Ker(\mu)$ is a deductive system of $A$, we get $x\rightarrow y\in \Ker(\mu)$. \\
$(3)$ Similarly as $(2)$. \\
$(4)$ It follows from $(2)$ and $(3)$. 
\end{proof}

\begin{Def} \label{st-psBCK-30-10} An internal state $\mu$ of type I or II on a pseudo BCK-algebra $A$ is said 
to be \emph{normal} if $\Ker(\mu)$ is a normal deductive system of $A$. In this case $(A,\mu)$ is said to be a 
\emph{normal type I(type II) state pseudo BCK-algebra}. 
\end{Def}

Denote $\mathcal{IS}^{(I)}_n(A)$ and $\mathcal{IS}^{(II)}_n(A)$ the set of all normal internal states of 
type I and II on a pseudo BCK-algebra $A$, respectively. 

\begin{prop} \label{st-psBCK-30-20} Any internal state of type I or type II on a commutative pseudo BCK-algebra 
is normal.  
\end{prop}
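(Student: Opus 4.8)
The plan is to reduce to the type I case and then to exploit the fact that on a commutative pseudo BCK-algebra the two ``auxiliary'' terms occurring in $(IS_2)$ coincide, both being the join $x\vee y$.

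First, since $A$ is commutative, Proposition \ref{st-psBCK-10-40} gives $\mathcal{IS}^{(I)}(A)=\mathcal{IS}^{(II)}(A)$, so it suffices to prove the statement for $\mu\in\mathcal{IS}^{(I)}(A)$; the type II case is then immediate, and in any case can be treated verbatim using $(IS^{'}_2)$ together with the identities $(y\rightarrow x)\rightsquigarrow x=(x\rightarrow y)\rightsquigarrow y$ and $(y\rightsquigarrow x)\rightarrow x=(x\rightsquigarrow y)\rightarrow y$ supplied by Proposition \ref{comm-ds-20}. By Proposition \ref{st-psBCK-20}$(4)$ we already know $\Ker(\mu)\in{\mathcal DS}(A)$, so it remains only to verify condition $(iii)$ in the definition of a normal deductive system: for all $x,y\in A$, $x\rightarrow y\in\Ker(\mu)$ iff $x\rightsquigarrow y\in\Ker(\mu)$.

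Fix $x,y\in A$. Since $A$ is commutative, Proposition \ref{comm-ds-20} yields $(x\rightarrow y)\rightsquigarrow y=(x\rightsquigarrow y)\rightarrow y$ (their common value being $x\vee y$); set $t=\mu\bigl((x\rightarrow y)\rightsquigarrow y\bigr)=\mu\bigl((x\rightsquigarrow y)\rightarrow y\bigr)$. Applying $(IS_2)$ we obtain $\mu(x\rightarrow y)=t\rightarrow\mu(y)$ and $\mu(x\rightsquigarrow y)=t\rightsquigarrow\mu(y)$. Now by $(psBCK_6)$ we have $t\rightarrow\mu(y)=1$ iff $t\le\mu(y)$ iff $t\rightsquigarrow\mu(y)=1$; hence $\mu(x\rightarrow y)=1$ iff $\mu(x\rightsquigarrow y)=1$, that is, $x\rightarrow y\in\Ker(\mu)$ iff $x\rightsquigarrow y\in\Ker(\mu)$. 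Therefore $\Ker(\mu)$ is a normal deductive system of $A$, i.e. $\mu$ is normal.

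The argument is short, and I do not expect a genuine obstacle: the whole point is the observation that commutativity forces the coincidence $(x\rightarrow y)\rightsquigarrow y=(x\rightsquigarrow y)\rightarrow y$, which makes the right-hand sides of the two equations in $(IS_2)$ share the \emph{same} first factor $t$, after which $(psBCK_6)$ closes the argument. The only point requiring a little care is making sure both type I and type II operators are covered, which is handled cleanly via Proposition \ref{st-psBCK-10-40} (or, alternatively, by running the identical computation with $(IS^{'}_2)$).
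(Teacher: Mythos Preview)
Your proof is correct and follows essentially the same approach as the paper: both arguments use commutativity to identify $(x\rightarrow y)\rightsquigarrow y$ with $(x\rightsquigarrow y)\rightarrow y$ and then apply $(IS_2)$ to conclude that $\mu(x\rightarrow y)=1$ iff $\mu(x\rightsquigarrow y)=1$. Your version is in fact slightly more explicit than the paper's, which silently passes from $t\rightarrow\mu(y)=1$ to $\mu(x\rightsquigarrow y)=1$ without naming the $(psBCK_6)$ step; and your reduction of the type II case via Proposition~\ref{st-psBCK-10-40} is a clean alternative to the paper's ``treated in the same way''.
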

\begin{proof}
Let $\mu$ be an internal state of type I of a commutative pseudo BCK-algebra $A$. 
Consider $x, y\in A$ such that $x\rightarrow y\in \Ker(\mu)$, that is $\mu(x\rightarrow y)=1$. 
It follows that $\mu((x\rightarrow y)\rightsquigarrow y)\rightarrow \mu(y)=1$. 
Since $A$ is commutative, we get 
$\mu((x\rightsquigarrow y)\rightarrow y)\rightarrow \mu(y)=1$, that is $\mu(x\rightsquigarrow y)=1$, hence
$x\rightsquigarrow y\in \Ker(\mu)$. 
Similarly from $x\rightsquigarrow y\in \Ker(\mu)$ we get $x\rightarrow y\in \Ker(\mu)$. 
Hence $\Ker(\mu)$ is a normal deductive system on $A$. 
The case of an internal state of type II on $A$ can be treated in the same way as the case of type I internal state. 
\end{proof}

\begin{prop} \label{st-psBCK-30-30} Let $(A, \mu)$ be a a type I or type II state pseudo BCK-algebra.  
If $A$ is a meet-semilattice and $\mu$  satisfies the condition: \\
$\hspace*{2cm}$ $\mu(x\rightarrow y)=\mu(x)\rightarrow \mu(x\wedge y)$ and  
                     $\mu(x\rightsquigarrow y)=\mu(x)\rightsquigarrow \mu(x\wedge y)$, \\
for all $x, y\in A$ then $\mu$ is a normal state operator on $A$.     
\end{prop}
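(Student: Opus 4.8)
The plan is to verify directly that $\Ker(\mu)$ satisfies the normality condition $(iii)$ from the definition of a normal deductive system, i.e.\ that for all $x,y\in A$ one has $x\rightarrow y\in\Ker(\mu)$ iff $x\rightsquigarrow y\in\Ker(\mu)$; by Definition \ref{st-psBCK-30-10} this is precisely what it means for $\mu$ to be normal. Note that $\Ker(\mu)\in{\mathcal DS}(A)$ is already known from Proposition \ref{st-psBCK-20}$(4)$, so only condition $(iii)$ remains, and nothing in the argument will distinguish the type I from the type II case: only $(IS_1)$ and the extra hypothesis on $\mu$ are used.

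First I would extract from the hypothesis the auxiliary fact that $\mu(x)=\mu(x\wedge y)$ whenever $x\rightarrow y\in\Ker(\mu)$. Indeed, if $\mu(x\rightarrow y)=1$, the assumed identity $\mu(x\rightarrow y)=\mu(x)\rightarrow\mu(x\wedge y)$ gives $\mu(x)\rightarrow\mu(x\wedge y)=1$, hence $\mu(x)\le\mu(x\wedge y)$ by $(psBCK_6)$. On the other hand $x\wedge y\le x$, so $(IS_1)$ yields $\mu(x\wedge y)\le\mu(x)$, and antisymmetry $(psBCK_5)$ forces $\mu(x)=\mu(x\wedge y)$.

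Feeding this back into the second half of the hypothesis, $\mu(x\rightsquigarrow y)=\mu(x)\rightsquigarrow\mu(x\wedge y)=\mu(x)\rightsquigarrow\mu(x)=1$, where the last equality uses that $z\rightsquigarrow z=1$ for every $z$ (a consequence of $(psBCK_3)$ and $(psBCK_6)$). Thus $x\rightsquigarrow y\in\Ker(\mu)$. The reverse implication is the mirror argument: starting from $x\rightsquigarrow y\in\Ker(\mu)$, the identity $\mu(x\rightsquigarrow y)=\mu(x)\rightsquigarrow\mu(x\wedge y)$ together with $x\wedge y\le x$ and $(IS_1)$ again yields $\mu(x)=\mu(x\wedge y)$, and then $\mu(x\rightarrow y)=\mu(x)\rightarrow\mu(x\wedge y)=\mu(x)\rightarrow\mu(x)=1$. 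This establishes $(iii)$, so $\Ker(\mu)\in{\mathcal DS}_n(A)$ and $\mu$ is a normal state operator.

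There is essentially no obstacle: the proof is a short chain of order manipulations. The only points worth a moment's care are recognizing that the meet-semilattice hypothesis enters only through the inequality $x\wedge y\le x$ combined with monotonicity of $\mu$, and that the two displayed identities are perfectly symmetric, so both directions of $(iii)$ follow by the same computation with $\rightarrow$ and $\rightsquigarrow$ interchanged.
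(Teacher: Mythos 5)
Your proof is correct and follows essentially the same route as the paper: deduce $\mu(x)=\mu(x\wedge y)$ from $\mu(x)\rightarrow\mu(x\wedge y)=1$ together with $x\wedge y\le x$ and $(IS_1)$, then plug this into the companion identity to get $\mu(x\rightsquigarrow y)=1$, and argue symmetrically for the converse. You merely make explicit the monotonicity step that the paper leaves implicit.
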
 
\begin{proof}
Consider $x, y\in A$ such that $x\rightarrow y\in \Ker(\mu)$, that is $\mu(x\rightarrow y)=1$. 
It follows that $\mu(x)\rightarrow \mu(x\wedge y)=1$, so $\mu(x) \le \mu(x\wedge y)$, hence $\mu(x)=\mu(x\wedge y)$. 
We get $\mu(x\rightsquigarrow y)=\mu(x)\rightsquigarrow \mu(x\wedge y)=1$, that is $x\rightsquigarrow y\in \Ker(\mu)$. 
Similaly, from $x\rightsquigarrow y\in \Ker(\mu)$ we have $x\rightarrow y\in \Ker(\mu)$. 
We conclude that $\Ker(\mu)$ is a normal deductive system of $A$, thus $\mu$ is a normal state operator on $A$.
\end{proof} 

\begin{Def} \label{st-psBCK-30-40} Let $(A,\wedge,\rightarrow,\rightsquigarrow,1)$ be a pseudo BCK-meet-semilattice 
and $\mu$ be a unary operator on $A$. Then $(A,\mu)$ is called a \emph{state pseudo BCK-meet-semilattice of type I (type II)} or a \emph{type I (type II) state pseudo BCK-meet-semilattice} if $(A,\mu)$ is a type I (type II) 
state pseudo BCK-algebra and $\mu$ satisfies the following condition, for all $x, y\in A$: \\ 
$(IS_4)$ $\mu(\mu(x)\wedge \mu(y))=\mu(x)\wedge \mu(y)$. 
\end{Def}

\begin{prop} \label{st-psBCK-30-50} Let $(A,\wedge,\rightarrow,\rightsquigarrow,1)$ be a linearly ordered pseudo BCK-meet-semilattice and $\mu$ be a unary operator on $A$. Then: \\
$(1)$ if $(A,\mu)$ is a type I (type II) state pseudo BCK-algebra then $(A,\mu)$ is a type I (type II) state pseudo BCK-meet-semilattice; \\
$(2)$ if $(A,\mu)$ is a normal type I (type II) state pseudo BCK-algebra then $(A,\mu)$ is a normal type I (type II) state pseudo BCK-meet-semilattice.  
\end{prop}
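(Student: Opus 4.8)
The plan is to show that on a chain the axiom $(IS_4)$ holds automatically, so both parts of the proposition reduce to unwinding Definitions \ref{st-psBCK-30-10} and \ref{st-psBCK-30-40} together with the idempotency of $\mu$ already recorded in Proposition \ref{st-psBCK-20}(2).

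First I would prove (1). Fix $x, y \in A$. Since $A$ is linearly ordered, the elements $\mu(x)$ and $\mu(y)$ are comparable, hence $\mu(x)\wedge\mu(y)\in\{\mu(x),\mu(y)\}$; by symmetry we may assume $\mu(x)\wedge\mu(y)=\mu(x)$. Proposition \ref{st-psBCK-20}(2), which is stated for type I and type II state pseudo BCK-algebras alike, gives the idempotency $\mu(\mu(x))=\mu(x)$, so
\[
  \mu(\mu(x)\wedge\mu(y))=\mu(\mu(x))=\mu(x)=\mu(x)\wedge\mu(y),
\]
which is exactly $(IS_4)$. As $A$, being a chain, is a pseudo BCK-meet-semilattice and $(A,\mu)$ is a type I (type II) state pseudo BCK-algebra by hypothesis, Definition \ref{st-psBCK-30-40} is satisfied, so $(A,\mu)$ is a type I (type II) state pseudo BCK-meet-semilattice.

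For (2) I would simply observe that, by Definition \ref{st-psBCK-30-10}, normality of $(A,\mu)$ as a type I (type II) state pseudo BCK-algebra is the single condition $\Ker(\mu)\in{\mathcal DS}_n(A)$, which depends only on $\mu$ and the underlying algebra and is therefore unchanged. Combining this with part (1) shows that $(A,\mu)$ is a type I (type II) state pseudo BCK-meet-semilattice whose kernel is a normal deductive system, i.e.\ a normal type I (type II) state pseudo BCK-meet-semilattice.

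I do not expect any genuine obstacle here: the only idea needed is that in a chain the binary meet of two elements is always one of the two elements, which feeds directly into the already-established idempotency of $\mu$; the rest is bookkeeping with the definitions. The one point to handle carefully is to make sure that Proposition \ref{st-psBCK-20}(2) is invoked in a form valid for both the type I and the type II case, so that the argument is genuinely uniform across the two.
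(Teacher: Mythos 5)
Your proof is correct. For part (1) the paper offers no details (it says only ``It is straightforward''), and the argument you supply --- in a chain $\mu(x)\wedge\mu(y)$ is one of $\mu(x),\mu(y)$, so $(IS_4)$ follows from the idempotency $\mu(\mu(x))=\mu(x)$ of Proposition \ref{st-psBCK-20}$(2)$, which indeed holds for both type I and type II operators --- is exactly the intended one. For part (2) you diverge slightly from the paper, which derives it as ``a consequence of Proposition \ref{st-psBCK-30-30}''; you instead observe directly that normality is, by Definition \ref{st-psBCK-30-10}, just the condition $\Ker(\mu)\in{\mathcal DS}_n(A)$, which is untouched by adjoining $(IS_4)$, so (2) follows from (1) plus the hypothesis. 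Your route is if anything cleaner: Proposition \ref{st-psBCK-30-30} concludes normality from an extra identity involving $\wedge$ that is not among the hypotheses of part (2), whereas normality is already assumed there, so nothing beyond your bookkeeping is needed. No gaps.
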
 
\begin{proof}
$(1)$ It is straightforward. \\
$(2)$ It is a consequence of Proposition \ref{st-psBCK-30-30}.
\end{proof}

\begin{cor} \label{st-psBCK-30-60} Let $(A,\odot,\rightarrow,\rightsquigarrow,1)$ be a simple basic pseudo-hoop. 
Then: \\
$(1)$ if $(A,\mu)$ is a type I (type II) state pseudo BCK-algebra then $(A,\mu)$ is a type I (type II) state pseudo BCK-meet-semilattice; \\
$(2)$ if $(A,\mu)$ is a normal type I (type II) state pseudo BCK-algebra then $(A,\mu)$ is a normal type I (type II) state pseudo BCK-meet-semilattice.  
\end{cor}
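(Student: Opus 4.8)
The plan is to observe that the hypotheses of the corollary force $A$ to be precisely a linearly ordered pseudo BCK-meet-semilattice, so that both assertions become immediate instances of Proposition \ref{st-psBCK-30-50}. Thus the proof should be a short reduction rather than a direct argument.

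First I would invoke the structure theory recalled at the end of Section 2. Every Wajsberg pseudo-hoop is a basic pseudo-hoop (\cite[Prop. 4.9]{Geo16}), and every simple basic pseudo-hoop is a linearly ordered Wajsberg pseudo-hoop (\cite[Cor. 4.15]{Geo16}); hence our $A$ is in particular a linearly ordered pseudo-hoop. Since every pseudo-hoop is a pseudo BCK-meet-semilattice (\cite[Prop. 2.2]{Ciu2}), with meet $x\wedge y=(x\rightarrow y)\odot x=x\odot(x\rightsquigarrow y)$, the reduct $(A,\wedge,\rightarrow,\rightsquigarrow,1)$ is a pseudo BCK-meet-semilattice. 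Because the semilattice order coincides with the BCK-order $x\le y$ iff $x\rightarrow y=1$ (this is exactly the compatibility of $(psH_2)$ with $(psBCK_6')$), this pseudo BCK-meet-semilattice inherits the linear order of $A$. Therefore $(A,\wedge,\rightarrow,\rightsquigarrow,1)$ satisfies all hypotheses of Proposition \ref{st-psBCK-30-50}.

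Now I would simply appeal to that proposition. If $(A,\mu)$ is a type I (respectively type II) state pseudo BCK-algebra, then Proposition \ref{st-psBCK-30-50}(1) gives that $(A,\mu)$ is a type I (type II) state pseudo BCK-meet-semilattice, i.e.\ $\mu$ in addition satisfies $(IS_4)$; this proves $(1)$. If moreover $(A,\mu)$ is normal, then Proposition \ref{st-psBCK-30-50}(2) yields that $(A,\mu)$ is a normal type I (type II) state pseudo BCK-meet-semilattice, which is $(2)$. The only point requiring (routine) care is the identification of the pseudo-hoop semilattice order with the pseudo BCK-order used in Proposition \ref{st-psBCK-30-50}, so that ``linearly ordered'' transfers correctly; I do not expect any genuine obstacle, since the entire content of the corollary is carried by the cited classification of simple basic pseudo-hoops together with Proposition \ref{st-psBCK-30-50}.
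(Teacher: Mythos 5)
Your reduction is exactly the one the paper intends: since a simple basic pseudo-hoop is a linearly ordered (Wajsberg) pseudo-hoop and every pseudo-hoop is a pseudo BCK-meet-semilattice, the corollary is an immediate instance of Proposition \ref{st-psBCK-30-50}, which is why the paper states it without proof. Your argument is correct and matches the intended one.
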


\begin{prop} \label{st-psBCK-40} Let $(A, \rightarrow, \rightsquigarrow, \mu, 1)$ be a normal type II pseudo BCK-algebra. Then: \\
$(1)$ the map $\hat \mu:A/\Ker(\mu) \to A/\Ker(\mu)$ defined by $\hat \mu(x/\Ker(\mu))= \mu(x)/\Ker(\mu)$ is both 
a normal type I and type II state operator on $A/\Ker(\mu);$ \\
$(2)$ $(A, \mu)$ is a normal type I state pseudo BCK-algebra.
\end{prop}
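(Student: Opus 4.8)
The plan is to put $H:=\Ker(\mu)$ and to work through the canonical projection $\pi_H:A\to A/H$. Since $\mu$ is normal, $H\in\mathcal{DS}_n(A)$ and $A/H$ is a pseudo BCK-algebra; moreover $H\in\mathcal{DS}_c(A)$ by Proposition~\ref{st-psBCK-30}$(4)$, and therefore $A/H$ is a \emph{commutative} pseudo BCK-algebra by Theorem~\ref{comm-ds-150}. This commutativity is the feature that will let me pass freely between type I and type II in both parts.

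Two elementary observations carry the argument. First, $(\star)$: if $a\rightarrow b\in\Ker(\mu)$ then $\mu(a)\le\mu(b)$, because $1=\mu(a\rightarrow b)\le\mu(a)\rightarrow\mu(b)$ by Proposition~\ref{st-psBCK-20}$(3)$. Second, $\mu$ is constant on the classes of $\Theta_H$: if $(u,v)\in\Theta_H$ then $u\rightarrow v,\,v\rightarrow u\in H=\Ker(\mu)$, so $\mu(u)\le\mu(v)\le\mu(u)$ by $(\star)$, whence $\mu(u)=\mu(v)$. In particular $\hat\mu$ is well defined and satisfies $\hat\mu\circ\pi_H=\pi_H\circ\mu$.

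For part $(1)$ I would verify that $\hat\mu$ satisfies $(IS_1)$, $(IS^{'}_2)$, $(IS_3)$ by applying the homomorphism $\pi_H$ to the corresponding facts about $\mu$; since $\pi_H$ is onto, a generic element of $A/H$ may be written $\pi_H(x)$. For $(IS_1)$: $\pi_H(x)\le\pi_H(y)$ forces $x\rightarrow y\in H$, so $\mu(x)\le\mu(y)$ by $(\star)$ and then $\hat\mu(\pi_H(x))\le\hat\mu(\pi_H(y))$ since $\pi_H$ is order preserving. For $(IS^{'}_2)$ and $(IS_3)$: push the identities $(IS^{'}_2)$ and $(IS_3)$ for $\mu$ through $\pi_H$ and use $\hat\mu\circ\pi_H=\pi_H\circ\mu$ (the $\rightsquigarrow$-halves are handled identically). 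Thus $\hat\mu\in\mathcal{IS}^{(II)}(A/H)$, and since $A/H$ is commutative, $\hat\mu\in\mathcal{IS}^{(I)}(A/H)$ as well by Proposition~\ref{st-psBCK-10-40}. Finally $\Ker(\hat\mu)=\{\pi_H(1)\}$: if $\hat\mu(\pi_H(x))=\pi_H(1)$ then $\mu(x)\in H=\Ker(\mu)$, so $\mu(\mu(x))=1$, i.e.\ $\mu(x)=1$ by Proposition~\ref{st-psBCK-20}$(2)$, i.e.\ $x\in H$. Since $\{\pi_H(1)\}$ is (trivially) a normal deductive system of $A/H$, $\hat\mu$ is a normal type I \emph{and} type II state operator on $A/H$.

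For part $(2)$, $\mu$ already satisfies $(IS_1)$ and $(IS_3)$ (these do not refer to the type) and $\Ker(\mu)$ is normal by hypothesis, so it suffices to derive $(IS_2)$ from $(IS^{'}_2)$. Commutativity of $A/H$ yields $\pi_H((x\rightarrow y)\rightsquigarrow y)=\pi_H((y\rightarrow x)\rightsquigarrow x)$ and $\pi_H((x\rightsquigarrow y)\rightarrow y)=\pi_H((y\rightsquigarrow x)\rightarrow x)$, and by the $\Theta_H$-invariance of $\mu$ this upgrades to $\mu((x\rightarrow y)\rightsquigarrow y)=\mu((y\rightarrow x)\rightsquigarrow x)$ and $\mu((x\rightsquigarrow y)\rightarrow y)=\mu((y\rightsquigarrow x)\rightarrow x)$. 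Substituting these into the two identities of $(IS^{'}_2)$ turns them into the two identities of $(IS_2)$, so $\mu\in\mathcal{IS}^{(I)}(A)$ and it is normal; that is, $(A,\mu)$ is a normal type I state pseudo BCK-algebra. The only genuinely substantive steps are isolating $(\star)$ together with the $\Theta_H$-invariance of $\mu$, and observing that $A/\Ker(\mu)$ is commutative; after that everything reduces to routine verification of the four axioms, so I expect no real obstacle beyond bookkeeping.
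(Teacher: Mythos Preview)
Your proposal is correct and follows essentially the same route as the paper: establish well-definedness via Proposition~\ref{st-psBCK-20}$(3)$, use Proposition~\ref{st-psBCK-30}$(4)$ and Theorem~\ref{comm-ds-150} to get commutativity of $A/\Ker(\mu)$, then invoke Proposition~\ref{st-psBCK-10-40} for part~(1) and the $\Theta_H$-invariance of $\mu$ to convert $(IS^{'}_2)$ into $(IS_2)$ for part~(2). Your explicit computation that $\Ker(\hat\mu)=\{\pi_H(1)\}$ is a small bonus the paper leaves implicit under ``straightforward''.
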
 
\begin{proof}
$(1)$ If $x/\Ker(\mu)=y/\Ker(\mu)$ then $x\rightarrow y, y\rightarrow x\in \Ker(\mu)$, that is 
$\mu(x\rightarrow y)=\mu(y\rightarrow x)=1$. Applying Proposition \ref{st-psBCK-20}$(3)$, it follows that 
$\mu(x)=\mu(y)$. Hence $\hat \mu$ is well defined. 
The proof of the fact that $\hat \mu$ is a normal type II state on $A/\Ker(\mu)$ is straightforward. 
Since $\Ker(\mu)$ is a normal commutative deductive system of $A$ then according to Theorem \ref{comm-ds-150}, $A/\Ker(\mu)$ is a commutative pseudo BCK-algebra. By Proposition \ref{st-psBCK-10-40}, $\hat \mu$ is also a 
normal type I state on $A/\Ker(\mu)$. \\
$(2)$ Let $x, y\in A$. Since $A/\Ker(\mu)$ is a commutative pseudo BCK-algebra, it follows that \\
$\hspace*{2cm}$ $((y\rightarrow x)\rightsquigarrow x)/\Ker(\mu)=((x\rightarrow y)\rightsquigarrow y)/\Ker(\mu)$ and \\
$\hspace*{2cm}$ $((y\rightsquigarrow x)\rightarrow x)/\Ker(\mu)=((x\rightsquigarrow y)\rightarrow y)/\Ker(\mu)$. \\
Similarly as in $(1)$ we get \\
$\hspace*{2cm}$ $\mu((y\rightarrow x)\rightsquigarrow x)=\mu((x\rightarrow y)\rightsquigarrow y)$ and \\
$\hspace*{2cm}$ $\mu((y\rightsquigarrow x)\rightarrow x)=\mu((x\rightsquigarrow y)\rightarrow y)$. \\
Hence \\
$\hspace*{2cm}$ $\mu((y\rightarrow x)\rightsquigarrow x)\rightarrow \mu(x)=
                 \mu((x\rightarrow y)\rightsquigarrow y)\rightarrow \mu(y)$ and \\
$\hspace*{2cm}$ $\mu((y\rightsquigarrow x)\rightarrow x)\rightsquigarrow \mu(x)=
                  \mu((x\rightsquigarrow y)\rightarrow y)\rightsquigarrow \mu(y)$. \\
Thus $(A, \mu)$ is a normal type I state pseudo BCK-algebra.
\end{proof}

\begin{theo} \label{st-psBCK-50} Let $\mu:A\longrightarrow A$ be a map on a pseudo BCK-algebra $A$. 
Then $(A, \mu)$ is a normal type II state pseudo BCK-algebra if and only if $(A, \mu)$ is a normal type I 
state pseudo BCK-algebra and $\Ker(\mu)\in {\mathcal DS}_c(A)$.
\end{theo}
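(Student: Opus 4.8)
The plan is to prove the two implications separately; the forward direction is essentially a repackaging of earlier results, and the backward direction needs one short argument.

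For the ``only if'' direction, I would assume $(A,\mu)$ is a normal type II state pseudo BCK-algebra. Then Proposition \ref{st-psBCK-40}$(2)$ gives at once that $(A,\mu)$ is a normal type I state pseudo BCK-algebra, and Proposition \ref{st-psBCK-30}$(4)$ gives $\Ker(\mu)\in {\mathcal DS}_c(A)$. Nothing further is required.

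For the ``if'' direction, I would assume $(A,\mu)$ is a normal type I state pseudo BCK-algebra with $H:=\Ker(\mu)\in {\mathcal DS}_c(A)$. By Definition \ref{st-psBCK-30-10}, $H$ is also a normal deductive system, so $H\in {\mathcal DS}_n(A)\cap {\mathcal DS}_c(A)$, and Theorem \ref{comm-ds-150} yields that $A/H$ is a commutative pseudo BCK-algebra. Since $(IS_1)$ and $(IS_3)$ are shared by type I and type II operators, and $\mu$ is normal by hypothesis, the only thing left is to derive $(IS^{'}_2)$ from $(IS_2)$. The key observation is that commutativity of $A/H$ means that for all $x,y\in A$ the elements $(x\rightarrow y)\rightsquigarrow y$ and $(y\rightarrow x)\rightsquigarrow x$ lie in the same $\Theta_H$-class, and likewise $(x\rightsquigarrow y)\rightarrow y$ and $(y\rightsquigarrow x)\rightarrow x$. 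Then I would reuse the well-definedness trick from the proof of Proposition \ref{st-psBCK-40}$(1)$: if $a\rightarrow b,\,b\rightarrow a\in \Ker(\mu)$ then Proposition \ref{st-psBCK-20}$(3)$ gives $\mu(a)\le\mu(b)$ and $\mu(b)\le\mu(a)$, hence $\mu(a)=\mu(b)$. Applying this to $a=(x\rightarrow y)\rightsquigarrow y$ and $b=(y\rightarrow x)\rightsquigarrow x$ yields $\mu((x\rightarrow y)\rightsquigarrow y)=\mu((y\rightarrow x)\rightsquigarrow x)$, so that $(IS_2)$ becomes
\[
\mu(x\rightarrow y)=\mu((x\rightarrow y)\rightsquigarrow y)\rightarrow\mu(y)=\mu((y\rightarrow x)\rightsquigarrow x)\rightarrow\mu(y),
\]
and symmetrically $\mu(x\rightsquigarrow y)=\mu((y\rightsquigarrow x)\rightarrow x)\rightsquigarrow\mu(y)$ using the $\rightsquigarrow$-version of $(IS_2)$. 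Thus $(IS^{'}_2)$ holds, $(A,\mu)$ is a type II state pseudo BCK-algebra, and it is normal because $\Ker(\mu)$ is.

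I do not expect a real obstacle: the proof is an assembly of Proposition \ref{st-psBCK-40}, Proposition \ref{st-psBCK-30}$(4)$, Theorem \ref{comm-ds-150}, and the fact that $\mu$ is constant on $\Theta_H$-classes. The only point needing a little care is making explicit that commutativity of the quotient $A/H$ translates into the $\Theta_H$-congruences between $(x\rightarrow y)\rightsquigarrow y$ and $(y\rightarrow x)\rightsquigarrow x$ (and their $\rightsquigarrow$-analogues), which is what lets us replace one argument of $\mu$ by the other in $(IS_2)$.
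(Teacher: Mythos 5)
Your proposal is correct and follows essentially the same route as the paper's proof: the forward direction is Proposition \ref{st-psBCK-40}$(2)$ together with Proposition \ref{st-psBCK-30}$(4)$, and the converse uses Theorem \ref{comm-ds-150} to make $A/\Ker(\mu)$ commutative and then the fact that $\mu$ is constant on $\Theta_{\Ker(\mu)}$-classes (via Proposition \ref{st-psBCK-20}$(3)$) to turn $(IS_2)$ into $(IS^{'}_2)$. Your write-up is in fact slightly cleaner than the paper's, which states the resulting identity with a misprinted $\mu(x)$ where $\mu(y)$ is meant.
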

\begin{proof}
Suppose that $(A, \mu)$ is a normal type II state pseudo BCK-algebra. Then, according to Proposition \ref{st-psBCK-40}, 
$(A, \mu)$ is a normal type I state pseudo BCK-algebra. 
Conversely, let $(A, \mu)$ be a normal type I state pseudo BCK-algebra such that $\Ker(\mu)$ is a commutative deductive system of $A$. It follows that \\
$\hspace*{2cm}$ $((y\rightarrow x)\rightsquigarrow x)/\Ker(\mu)=((x\rightarrow y)\rightsquigarrow y)/\Ker(\mu)$ and \\
$\hspace*{2cm}$ $((y\rightsquigarrow x)\rightarrow x)/\Ker(\mu)=((x\rightsquigarrow y)\rightarrow y)/\Ker(\mu)$, \\
for all $x, y\in A$. Similarly as in the proof of Proposition \ref{st-psBCK-40}, we have \\
$\hspace*{2cm}$ $\mu((y\rightarrow x)\rightsquigarrow x)\rightarrow \mu(x)=
                 \mu((x\rightarrow y)\rightsquigarrow y)\rightarrow \mu(y)$ and \\
$\hspace*{2cm}$ $\mu((y\rightsquigarrow x)\rightarrow x)\rightsquigarrow \mu(x)=
                  \mu((x\rightsquigarrow y)\rightarrow y)\rightsquigarrow \mu(y)$. \\
Thus $(A, \mu)$ is a normal type II state pseudo BCK-algebra.
\end{proof}

\begin{ex} \label{st-psBCK-70} Consider the pseudo BCK-algebra $(A,\rightarrow,\rightsquigarrow,1)$ 
from Example \ref{comm-ds-90} and the maps $\mu_i:A\longrightarrow A$, $i=1,2,\cdots,10$, given in the table below:
\[
\begin{array}{c|cccccc}
 x & 0 & a & b & c & d & 1 \\ \hline
\mu_1(x) & 0 & 0 & 0 & 1 & 1 & 1 \\
\mu_2(x) & 0 & a & a & 1 & 1 & 1 \\
\mu_3(x) & 0 & a & b & c & d & 1 \\
\mu_4(x) & 0 & b & b & 1 & 1 & 1 \\
\mu_5(x) & 0 & d & c & c & d & 1 \\
\mu_6(x) & 0 & 1 & 1 & 1 & 1 & 1 \\
\mu_7(x) & a & a & a & 1 & 1 & 1 \\
\mu_8(x) & b & b & b & 1 & 1 & 1 \\
\mu_9(x) & d & d & c & c & d & 1 \\
\mu_{10}(x) & 1 & 1 & 1 & 1 & 1 & 1 
\end{array}
.   
\]
Then we have: \\
$(1)$ $\mu_i \in \mathcal{IS}^{(I)}(A)$ for $i=1,2,\cdots,10;$ \\
$(2)$ $\mu_3,\mu_5,\mu_6,\mu_9,\mu_{10} \in \mathcal{IS}^{(I)}_n(A);$ \\
$(3)$ $\mu_6,\mu_{10} \in \mathcal{IS}^{(II)}_n(A);$ \\ 
$(4)$ $\Ker(\mu_6)=\{1,a,b,c,d\}\in {\mathcal DS}_c(A)$ and $\Ker(\mu_{10})=A\in {\mathcal DS}_c(A)$. 
\end{ex}

\begin{ex} \label{st-psBCK-80} Consider the set $A=\{a,b,c,1\}$ and the operation 
$\rightarrow$ given by the following table:
\[
\hspace{10mm}
\begin{array}{c|cccc}
\rightarrow & a & b & c & 1 \\ \hline
a & 1 & c & 1 & 1 \\ 
b & c & 1 & 1 & 1 \\ 
c & c & c & 1 & 1 \\ 
1 & a & b & c & 1  
\end{array}
. 
\]
Then $(A,\rightarrow,1)$ is a commutative BCK-algebra (see \cite[Ex. 5.1.13]{DvPu}). \\
Consider the maps $\mu_i:A\longrightarrow A$, $i=1,2,3,4$, given in the table below:
\[
\begin{array}{c|cccc}
 x & a & b & c & 1 \\ \hline
\mu_1(x) & a & a & c & 1 \\
\mu_2(x) & a & b & c & 1 \\
\mu_3(x) & b & b & c & 1 \\
\mu_4(x) & 1 & 1 & 1 & 1 
\end{array}
.   
\]
Then, for $i=1,2,3,4$, we have: \\
$(1)$ $\mu_i \in \mathcal{IS}^{(I)}_n(A)=\mathcal{IS}^{(II)}_n(A),$ \\
$(2)$ $\Ker(\mu_i)$ is a commutative deductive system of $A$.  
\end{ex}

$\vspace*{5mm}$

\section{State-morphism pseudo BCK-algebras}

In this section we introduce and study the notion of  state-morphism operators on a pseudo BCK-algebra showing 
that any normal type II state operator on a linearly ordered pseudo BCK-algebra is a state-morphism. 
For the case of a linearly ordered commutative pseudo BCK-algebra, we prove that any type I or type II state operator 
is also a state-morphism operator. 

\begin{Def} \label{sm-bck-10} Let $(A, \rightarrow,\rightsquigarrow,1)$ be a pseudo BCK-algebra. 
A homomorphism $\mu:A\longrightarrow A$ is called a \emph{state-morphism operator} on $A$ if $\mu^2=\mu$, where $\mu^2=\mu\circ \mu$. The pair $(A, \mu)$ is called a \emph{state-morphism pseudo BCK-algebra}.
\end{Def}

Denote $\mathcal{SM}(A)$ the set of all state-morphism operators on a pseudo BCK-algebra $A$. 

\begin{exs} \label{sm-bck-20}
$(1)$ $1_A, Id_A\in \mathcal{SM}(A)$ for any pseudo BCK-algebra $A$. \\
$(2)$ If $A$ is the pseudo BCK-algebra from Example \ref{st-psBCK-70} then 
$\mathcal{SM}(A)=\{\mu_3, \mu_6, \mu_{10}\}$. \\
$(3)$ If $(A,\rightarrow,\rightsquigarrow,0,1)$ is a bounded commutative pseudo BCK-algebra and  
$\mu:A\longrightarrow A$ defined by $\mu(x)=x^{\rightarrow_0\rightsquigarrow_0}$ then $\mu\in \mathcal{SM}(A)$.
\end{exs}

\begin{ex} \label{sm-bck-20-10} Let $(A_1, \rightarrow_1, \rightsquigarrow_1, 1_1)$ and 
$(A_2, \rightarrow_2, \rightsquigarrow_2, 1_2)$ be two pseudo BCK-algebras and let $A$ be the pseudo BCK-algebra 
defined in Example \ref{st-psBCK-10-30}. Then the maps $\mu_1, \mu_2:A\longrightarrow A$ defined by 
$\mu_1((x,y))=(x,x)$ and $\mu_2((x,y))=(y,y)$, for all $(x,y)\in A$ are state-morphism operators on $A$.
\end{ex}

\begin{rem} \label{sm-bck-20-20} 
By Lemma \ref{ps-eq-40}$(6)$, every state-morphism pseudo BCK-algebra is a type I state pseudo BCK-algebra, 
but not every state-morphism pseudo BCK-algebra is a type II state pseudo BCK-algebra. 
For example $Id_A$ is a state-morphism operator and a type I state operator on the pseudo BCK-algebra $A$, 
but it is a type II state operator iff $A$ is a commutative pseudo BCK-algebra. 
\end{rem}

\begin{prop} \label{sm-bck-30} Let $(A, \rightarrow,\rightsquigarrow,1)$ be a linearly ordered pseudo BCK-algebra. 
Then: \\
$(1)$ Any normal type II state operator on $A$ is a state-morphism operator; \\
$(2)$ If $A$ is commutative then any state operator on $A$ is a state-morphism operator.
\end{prop}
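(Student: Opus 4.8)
The plan is to prove $(1)$ directly and then reduce $(2)$ to $(1)$.

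For $(1)$, let $\mu$ be a normal type II state operator on the linearly ordered pseudo BCK-algebra $A$. By Proposition \ref{st-psBCK-20}$(1)$ and $(2)$ we already have $\mu(1)=1$ and $\mu\circ\mu=\mu$, so in view of Definition \ref{sm-bck-10} it suffices to check that $\mu$ is a homomorphism, i.e.\ that $\mu(x\rightarrow y)=\mu(x)\rightarrow\mu(y)$ and $\mu(x\rightsquigarrow y)=\mu(x)\rightsquigarrow\mu(y)$ for all $x,y\in A$. Here I would use linearity to split into two cases. If $y\le x$, then Proposition \ref{st-psBCK-30}$(1)$ gives both identities outright. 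If $x\le y$, then $x\rightarrow y=x\rightsquigarrow y=1$, so both left-hand sides equal $\mu(1)=1$; and since $(IS_1)$ yields $\mu(x)\le\mu(y)$, both right-hand sides equal $1$ as well. Since $A$ is linearly ordered one of these two cases always occurs, so $\mu$ is a homomorphism and hence a state-morphism operator.

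For $(2)$, the idea is to funnel an arbitrary state operator into the hypothesis of $(1)$. Assume $A$ is commutative (and still linearly ordered). By Proposition \ref{st-psBCK-10-40} we have $\mathcal{IS}^{(I)}(A)=\mathcal{IS}^{(II)}(A)$, so any state operator on $A$ is in particular a type II state operator; and by Proposition \ref{st-psBCK-30-20} it is normal. Part $(1)$ then applies and shows it is a state-morphism operator.

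I do not expect any real obstacle here: every step appeals to a result already established, and the only substantive observation is that on a chain the two state axioms $(IS_2)$ and $(IS^{'}_2)$ become interchangeable — on comparable pairs with $y\le x$ this is Proposition \ref{st-psBCK-30}$(1)$, while the comparable pairs with $x\le y$ are trivial since both implications collapse to $1$. The only mild subtlety to watch is that the homomorphism check must be carried out for both arrows $\rightarrow$ and $\rightsquigarrow$ separately, which is routine given the symmetry of $(IS^{'}_2)$.
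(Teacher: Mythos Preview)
Your proof of $(1)$ is correct and matches the paper's argument line for line: split on $x\le y$ versus $y\le x$, handle the first case trivially via $(IS_1)$ and $\mu(1)=1$, and handle the second via Proposition~\ref{st-psBCK-30}$(1)$ (the paper cites $(IS^{'}_2)$ directly, which amounts to the same thing). The paper additionally invokes Proposition~\ref{st-psBCK-40} to note that $\mu$ is also type I, but this is not actually needed for the argument, and you correctly omit it.

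For $(2)$ your approach is correct but organized differently from the paper's. The paper argues directly: for $x\le y$ proceed as before, and for $y\le x$ compute $\mu(x\rightarrow y)=\mu((y\rightarrow x)\rightsquigarrow x)\rightarrow\mu(y)=\mu(1\rightsquigarrow x)\rightarrow\mu(y)=\mu(x)\rightarrow\mu(y)$, using commutativity only through the collapse $(y\rightarrow x)\rightsquigarrow x=x$ when $y\le x$. You instead reduce to $(1)$ by invoking Propositions~\ref{st-psBCK-10-40} and~\ref{st-psBCK-30-20} to recognize any state operator on a commutative $A$ as a normal type II one. Your route is more modular and makes the dependence on earlier structural results explicit; the paper's is self-contained and shows that for $(2)$ one really only needs the single identity $(y\rightarrow x)\rightsquigarrow x=x$ for $y\le x$, not the full force of normality. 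Both are short and sound.
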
 
\begin{proof}
$(1)$ Let $\mu$ be a normal type II state operator on $A$. 
According to Proposition \ref{st-psBCK-40}, $\mu$ is also a type I state operator.
If $x\le y$ then by Proposition \ref{st-psBCK-20}$(1)$, $1=\mu(1)=\mu(x\rightarrow y)\le \mu(x)\rightarrow \mu(y)=1$. 
Hence $\mu(x\rightarrow y)=\mu(x)\rightarrow \mu(y)$ and similarly 
$\mu(x\rightsquigarrow y)=\mu(x)\rightsquigarrow \mu(y)$. \\
If $y\le x$ then by $(IS^{'}_2)$ we have $\mu(x\rightarrow y)=\mu(x)\rightarrow \mu(y)$ and  
$\mu(x\rightsquigarrow y)=\mu(x)\rightsquigarrow \mu(y)$. 
It follows that $\mu$ is a homomorphism on $A$. Since by Proposition \ref{st-psBCK-20}$(2)$, $\mu^2=\mu$, 
we conclude that $\mu$ is a state-morphism operator on $A$. \\
$(2)$ If $x\le y$, similarly as in $(1)$ we get $\mu(x\rightarrow y)=\mu(x)\rightarrow \mu(y)$ and  
$\mu(x\rightsquigarrow y)=\mu(x)\rightsquigarrow \mu(y)$. 
If $y\le x$ then 
$\mu(x\rightarrow y)=\mu((y\rightarrow x)\rightsquigarrow x)\rightarrow \mu(y)=\mu(x)\rightarrow \mu(y)$ and 
similarly $\mu(x\rightsquigarrow y)=\mu(x)\rightsquigarrow \mu(y)$. 
Thus $\mu$ is a state-morphism operator on $A$. 
\end{proof}

\begin{rem} \label{sm-bck-30-10} In general a type I state operator on a linearly ordered pseudo BCK-algebra $A$ 
is not necessarily a state-morphism operator on $A$. 
Indeed consider the set $A=\{0,a,b,1\}$, where $0<a<b<1$, and the operations $\rightarrow,\rightsquigarrow$ given by the following tables: 
\[
\hspace{10mm}
\begin{array}{c|cccc}
\rightarrow & 0 & a & b & 1 \\ \hline
0 & 1 & 1 & 1 & 1 \\ 
a & a & 1 & 1 & 1 \\ 
b & a & a & 1 & 1 \\ 
1 & 0 & b & b & 1 
\end{array}
\hspace{10mm} 
\begin{array}{c|ccccccc}
\rightsquigarrow & 0 & a & b & 1 \\ \hline
0 & 1 & 1 & 1 & 1 \\ 
a & a & 1 & 1 & 1 \\ 
b & a & a & 1 & 1 \\ 
1 & 0 & b & b & 1 
\end{array}
. 
\] 
Then $(A,\rightarrow,\rightsquigarrow,1)$ is a linearly ordered pseudo BCK-algebra (see \cite{Kuhr6}). 
Since $(a\rightarrow b)\rightsquigarrow b=b\neq 1=(b\rightarrow a)\rightsquigarrow a$, it follows that $A$ is 
not commutative. 
We can easily see that the map $\mu:A\longrightarrow A$, defined by $\mu(0)=\mu(a)=a, \mu(b)=\mu(1)=1$ is a type I 
state operator on $A$, but it is not a state-morphism operator. 
\end{rem}

For $\mu \in \mathcal{SM}(A)$, $\Ker(\mu)=\{x\in A \mid \mu(x)=1\}$ is called the \emph{kernel} of $\mu$.

\begin{lemma} \label{sm-bck-30-20} For any $\mu \in \mathcal{SM}(A)$, $\Ker(\mu)\in {\mathcal DS}_n(A)$. 
\end{lemma}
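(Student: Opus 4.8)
The statement is that for any state-morphism operator $\mu$ on a pseudo BCK-algebra $A$, the kernel $\Ker(\mu)$ is a normal deductive system. The plan is to verify the three defining conditions in turn: (i) $1\in \Ker(\mu)$; (ii) closure under modus ponens; (iii) the normality condition $x\rightarrow y\in\Ker(\mu)$ iff $x\rightsquigarrow y\in\Ker(\mu)$.

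First I would observe that since $\mu$ is a homomorphism of pseudo BCK-algebras, it preserves the constant $1$, so $\mu(1)=1$ and hence $1\in\Ker(\mu)$. Next, for the deductive-system property, suppose $x\in\Ker(\mu)$ and $x\rightarrow y\in\Ker(\mu)$, i.e. $\mu(x)=1$ and $\mu(x\rightarrow y)=1$. Because $\mu$ is a homomorphism, $\mu(x\rightarrow y)=\mu(x)\rightarrow\mu(y)$, so $1=1\rightarrow\mu(y)=\mu(y)$ using $(psBCK_3')$; hence $y\in\Ker(\mu)$. This is essentially the same argument as in Proposition \ref{st-psBCK-20}$(4)$, but now it is immediate since $\mu$ is a genuine homomorphism rather than merely satisfying the inequality in Proposition \ref{st-psBCK-20}$(3)$.

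Finally, for normality: if $x\rightarrow y\in\Ker(\mu)$ then $\mu(x\rightarrow y)=1$, and since $\mu$ is a homomorphism $\mu(x)\rightarrow\mu(y)=1$, which by $(psBCK_6)$ means $\mu(x)\le\mu(y)$, equivalently $\mu(x)\rightsquigarrow\mu(y)=1$; then $\mu(x\rightsquigarrow y)=\mu(x)\rightsquigarrow\mu(y)=1$, so $x\rightsquigarrow y\in\Ker(\mu)$. The reverse implication is symmetric. Hence $\Ker(\mu)\in{\mathcal DS}_n(A)$.

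There is no real obstacle here; the point of the lemma is precisely that being a homomorphism is a much stronger hypothesis than being a state operator of type I or II, so the kernel is automatically normal without any commutativity or linear-ordering assumption. The only care needed is to cite the correct axioms ($(psBCK_3')$ for $1\rightarrow x=x$ and $(psBCK_6)$ for the equivalence $x\le y$ iff $x\rightarrow y=1$ iff $x\rightsquigarrow y=1$) and to note that the homomorphism property gives $\mu(x\rightarrow y)=\mu(x)\rightarrow\mu(y)$ and $\mu(x\rightsquigarrow y)=\mu(x)\rightsquigarrow\mu(y)$ on the nose.
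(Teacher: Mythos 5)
Your proof is correct and is exactly the argument the paper intends: the paper simply declares the proof straightforward, and your verification of the three conditions (using $\mu(1)=1$, the homomorphism identities $\mu(x\rightarrow y)=\mu(x)\rightarrow\mu(y)$ and $\mu(x\rightsquigarrow y)=\mu(x)\rightsquigarrow\mu(y)$, together with $(psBCK_3')$ and $(psBCK_6)$) fills in precisely the omitted details.
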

\begin{proof} The proof is straightforward. 
\end{proof}

\begin{prop} \label{sm-bck-30-30} Let $(A, \mu)$ be a commutative state-morphism pseudo BCK-algebra. 
Then the following hold: \\
$(1)$ $\Img(\mu)$ is a commutative pseudo BCK-algebra; \\
$(2)$ $\mu(x\vee y)=\mu(x)\vee \mu(y)$ and $\mu(\mu(x)\vee \mu(y))=\mu(x)\vee \mu(y)$, for all $x, y\in A$.  
\end{prop}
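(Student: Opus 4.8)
The plan is to exploit that a state-morphism operator is, by Definition \ref{sm-bck-10}, an algebra homomorphism, so it transports every identity valid in $A$ — in particular the commutativity identities of Definition \ref{comm-ds-10} — and it commutes with every term operation, including the join.

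For part $(1)$, I would first recall that since $\mu\colon A\longrightarrow A$ is a homomorphism, $\Img(\mu)$ is a subalgebra of $A$: this is recorded in Proposition \ref{st-psBCK-20}$(5)$ once one observes, as in Remark \ref{sm-bck-20-20}, that every state-morphism pseudo BCK-algebra is a type I state pseudo BCK-algebra, and it is in any case immediate from $\mu(x)\rightarrow\mu(y)=\mu(x\rightarrow y)\in\Img(\mu)$, $\mu(x)\rightsquigarrow\mu(y)=\mu(x\rightsquigarrow y)\in\Img(\mu)$ and $1=\mu(1)\in\Img(\mu)$. Since commutativity is a pair of identities holding for all elements of $A$, it holds a fortiori for all elements of the subalgebra $\Img(\mu)$; hence $\Img(\mu)$ is a commutative pseudo BCK-algebra.

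For part $(2)$, I would use Proposition \ref{comm-ds-20}: in the commutative algebra $A$ one has $x\vee y=(x\rightarrow y)\rightsquigarrow y$ for all $x,y\in A$. Applying the homomorphism $\mu$ and using that it preserves both arrows,
\[
\mu(x\vee y)=\mu((x\rightarrow y)\rightsquigarrow y)=\mu(x\rightarrow y)\rightsquigarrow\mu(y)=(\mu(x)\rightarrow\mu(y))\rightsquigarrow\mu(y),
\]
and the right-hand side equals $\mu(x)\vee\mu(y)$ by the same join formula applied to the elements $\mu(x),\mu(y)$. For the remaining identity I would either apply what was just proved to the pair $\mu(x),\mu(y)$ and invoke $\mu\circ\mu=\mu$ (so that $\mu(\mu(x))=\mu(x)$ and $\mu(\mu(y))=\mu(y)$), or observe directly that $\mu(x)\vee\mu(y)$ lies in the subalgebra $\Img(\mu)$ and that, by Proposition \ref{st-psBCK-20}$(6)$, every element of $\Img(\mu)$ is a fixed point of $\mu$.

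There is no real obstacle here: the only point requiring care is to keep straight which identities are invoked ``for all $x,y\in A$'' and which are specialised to the particular elements $\mu(x),\mu(y)$, and to note that the join $x\vee y$ is an honest term in $\rightarrow,\rightsquigarrow$ (by Proposition \ref{comm-ds-20}) so that the homomorphism $\mu$ respects it.
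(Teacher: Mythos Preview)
Your proof is correct and follows essentially the same line as the paper's. The paper records only that $\Img(\mu)$ is a subalgebra of $A$ via Remark \ref{sm-bck-20-20} and Proposition \ref{st-psBCK-20}$(5)$ and declares the rest ``straightforward''; you have supplied exactly the straightforward details one would expect, namely that commutativity passes to subalgebras and that the join, being a term in $\rightarrow,\rightsquigarrow$, is preserved by the homomorphism $\mu$ and fixed on $\Img(\mu)$.
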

\begin{proof} Since $\mu$ is a type I state operator on $A$, by Remark \ref{sm-bck-20-20}, applying 
Proposition \ref{st-psBCK-20}$(5)$ it follows that $\Img(\mu)$ is a subalgebra of $A$.  
The rest of the proof is straightforward. 
\end{proof}

\begin{prop} \label{sm-bck-40} Let $(A, \mu)$ be a state-morphism pseudo BCK-algebra. The following hold: \\
$(1)$ $\Ker(\mu)=\{\mu(x)\rightarrow x \mid x\in A\}=\{x\rightarrow \mu(x) \mid x\in A\}$ \\
$\hspace*{1.9cm}$
               $=\{\mu(x)\rightsquigarrow x \mid x\in A\}=\{x\rightsquigarrow \mu(x) \mid x\in A\};$ \\
$(2)$ if $\Ker(\mu)=\{1\}$ then $\mu=Id_A$. 
\end{prop}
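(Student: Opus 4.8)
The plan is to prove part (1) by showing that $\Ker(\mu)$ coincides with each of the four sets on the right-hand side, and then to read part (2) off from the inclusions produced along the way.

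The easy half of (1) is that each of the four sets is contained in $\Ker(\mu)$. This is immediate from $\mu$ being a homomorphism with $\mu^2=\mu$: for every $x\in A$,
\[
\mu(\mu(x)\rightarrow x)=\mu(\mu(x))\rightarrow\mu(x)=\mu(x)\rightarrow\mu(x)=1,
\]
where the last equality uses that $t\rightarrow t=1$ in any pseudo BCK-algebra (from $(psBCK_3)$ and $(psBCK_6)$). The same computation with $\rightsquigarrow$ in place of $\rightarrow$, together with the symmetric ones $\mu(x\rightarrow\mu(x))=\mu(x)\rightarrow\mu(\mu(x))=1$ and its $\rightsquigarrow$-analogue, shows that $\mu(x)\rightarrow x$, $\mu(x)\rightsquigarrow x$, $x\rightarrow\mu(x)$ and $x\rightsquigarrow\mu(x)$ all lie in $\Ker(\mu)$.

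For the reverse inclusions, let $y\in\Ker(\mu)$, so $\mu(y)=1$. By $(psBCK_3')$ and $(psBCK_4')$ we get $y=1\rightarrow y=\mu(y)\rightarrow y$ and $y=1\rightsquigarrow y=\mu(y)\rightsquigarrow y$, and taking the index to be $x=y$ this places $y$ in $\{\mu(x)\rightarrow x\mid x\in A\}$ and in $\{\mu(x)\rightsquigarrow x\mid x\in A\}$; hence both of these sets equal $\Ker(\mu)$. The remaining two reverse inclusions — that every $y\in\Ker(\mu)$ can be written as $x\rightarrow\mu(x)$, respectively as $x\rightsquigarrow\mu(x)$ — are the real content, since the naive choice $x=y$ only gives $y\rightarrow\mu(y)=y\rightarrow 1=1$. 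Here I would use that $\Ker(\mu)$ is a normal deductive system (Lemma \ref{sm-bck-30-20}) and that $\mu$ is a retraction of $A$ onto the subalgebra $\Img(\mu)=\{x\mid x=\mu(x)\}$ (Proposition \ref{st-psBCK-20}$(5)$, $(6)$, which apply because a state-morphism operator is a type I state operator by Remark \ref{sm-bck-20-20}) in order to build a suitable witness $x$ from $y$; this is the step I expect to be the main obstacle.

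Finally, part (2) follows from the easy half of (1) alone: if $\Ker(\mu)=\{1\}$, then the inclusions above give $\mu(x)\rightarrow x=1$ and $x\rightarrow\mu(x)=1$ for all $x\in A$, so $(psBCK_6')$ yields $\mu(x)=x$; that is, $\mu=Id_A$.
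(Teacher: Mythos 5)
Your forward inclusions, the two identities $\Ker(\mu)=\{\mu(x)\rightarrow x\mid x\in A\}=\{\mu(x)\rightsquigarrow x\mid x\in A\}$, and part $(2)$ are all correct. The genuine gap is exactly where you flagged it: you never produce, for $y\in\Ker(\mu)$, a witness $x$ with $x\rightarrow\mu(x)=y$. Moreover, no amount of work with the normality of $\Ker(\mu)$ or with the retraction onto $\Img(\mu)$ can close this gap, because the inclusion $\Ker(\mu)\subseteq\{x\rightarrow\mu(x)\mid x\in A\}$ is false in general. Take the pseudo BCK-algebra of Example \ref{comm-ds-90} and the operator $\mu_6$ of Example \ref{st-psBCK-70}, which is listed as a state-morphism in Examples \ref{sm-bck-20}: here $\mu_6(0)=0$ and $\mu_6(x)=1$ for $x\neq 0$, so $x\rightarrow\mu_6(x)$ is either $0\rightarrow 0=1$ or $x\rightarrow 1=1$ for every $x$, whence $\{x\rightarrow\mu_6(x)\mid x\in A\}=\{1\}$, while $\Ker(\mu_6)=\{1,a,b,c,d\}$. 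So the two equalities involving $x\rightarrow\mu(x)$ and $x\rightsquigarrow\mu(x)$ cannot be proved; only the inclusions of these sets into $\Ker(\mu)$ hold.

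For what it is worth, the paper's own proof treats only $\{\mu(x)\rightarrow x\mid x\in A\}$ in full and disposes of the other three cases with the single word ``Similarly''; the symmetry it appeals to is spurious, since for $y\in\Ker(\mu)$ the analogous witness would be $y\rightarrow\mu(y)=y\rightarrow 1=1$, which is precisely the failure you identified. Your instinct that this is ``the real content'' was sound, and the honest conclusion is that the statement should be weakened to $\Ker(\mu)=\{\mu(x)\rightarrow x\mid x\in A\}=\{\mu(x)\rightsquigarrow x\mid x\in A\}$, with $\{x\rightarrow\mu(x)\mid x\in A\}$ and $\{x\rightsquigarrow\mu(x)\mid x\in A\}$ only contained in $\Ker(\mu)$. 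Part $(2)$ survives intact, since, as you note, it uses only those containments together with $(psBCK_6')$.
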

\begin{proof}
$(1)$ Since $\mu^2=\mu$ and $\mu$ is a homomorphism, we have 
$\mu(\mu(x)\rightarrow x)=\mu(x)\rightarrow \mu(x)=1$, so $\mu(x)\rightarrow x\in \Ker(\mu)$. 
If $x\in \Ker(\mu)$, we have $x=1\rightarrow x=\mu(x)\rightarrow x \in \{\mu(x)\rightarrow x \mid x\in A\}$. 
Hence $\Ker(\mu)=\{\mu(x)\rightarrow x \mid x\in A\}$. \\
Similarly $\Ker(\mu)=\{x\rightarrow \mu(x) \mid x\in A\}=\{\mu(x)\rightsquigarrow x \mid x\in A\}=
\{x\rightsquigarrow \mu(x) \mid x\in A\}$. \\
$(2)$ For any $x\in A$, $\mu(x)\rightarrow x, x\rightarrow \mu(x)\in \Ker(\mu)=\{1\}$, hence $\mu(x)=x$. 
\end{proof}

\begin{cor} \label{sm-bck-40-10} If $A$ is a simple pseudo BCK-algebra then $\mathcal{SM}(A)=\{1_A,Id_A\}$. 
\end{cor}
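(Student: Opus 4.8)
The plan is to derive this directly from the two structural facts already available: that the kernel of a state-morphism operator is a deductive system (Lemma \ref{sm-bck-30-20}), and that a trivial kernel forces $\mu=Id_A$ (Proposition \ref{sm-bck-40}$(2)$). The inclusion $\{1_A,Id_A\}\subseteq\mathcal{SM}(A)$ is immediate from Examples \ref{sm-bck-20}$(1)$, so the content is the reverse inclusion.

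First I would take an arbitrary $\mu\in\mathcal{SM}(A)$ and invoke Lemma \ref{sm-bck-30-20} to conclude that $\Ker(\mu)\in{\mathcal DS}_n(A)\subseteq{\mathcal DS}(A)$. Since $A$ is simple, ${\mathcal DS}(A)=\{\{1\},A\}$, so there are only two cases: $\Ker(\mu)=\{1\}$ or $\Ker(\mu)=A$. In the first case, Proposition \ref{sm-bck-40}$(2)$ yields $\mu=Id_A$. In the second case, $\mu(x)=1$ for every $x\in A$ by the definition of the kernel, i.e. $\mu=1_A$. Combining both cases gives $\mathcal{SM}(A)\subseteq\{1_A,Id_A\}$, and together with the trivial inclusion we obtain the equality.

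I do not expect any genuine obstacle here; the only point that deserves a word of care is that simplicity is defined via \emph{all} deductive systems rather than only the normal ones, but this causes no trouble because $\Ker(\mu)$ is in particular an ordinary deductive system, so the dichotomy $\Ker(\mu)\in\{\{1\},A\}$ is legitimate. The argument is therefore essentially a one-line application of the preceding lemma and proposition, and I would present it as such.
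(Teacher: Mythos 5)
Your proof is correct and follows exactly the route the paper intends: the corollary is stated immediately after Proposition \ref{sm-bck-40} precisely so that it follows from Lemma \ref{sm-bck-30-20} (the kernel is a deductive system), the simplicity dichotomy $\Ker(\mu)\in\{\{1\},A\}$, and Proposition \ref{sm-bck-40}$(2)$. Nothing is missing.
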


\begin{Def} \label{sm-bck-40-20} Let $(A, \mu)$ be a state-morphism pseudo BCK-algebra. A deductive system $D$ 
of $A$ is called a \emph{$\mu$-state deductive system} if $\mu(D)\subseteq D$. 
Denote by ${\mathcal SDS}_{\mu}(A)$ the set of all $\mu$-state deductive systems of $(A,\mu)$.
\end{Def} 

\begin{theo} \label{sm-bck-50} Let $(A, \mu)$ be a state-morphism pseudo BCK-algebra. The following hold: \\
$(1)$ $\mu$ is injective iff $\Ker(\mu)=\{1\};$ \\
$(2)$ if $D\in {\mathcal DS}(A)$ then $\mu^{-1}(D)\in {\mathcal DS}(A)$ and $\Ker(\mu)\subseteq \mu^{-1}(D);$ \\
$(3)$ if $D\in {\mathcal DS}_n(A)$ then $\mu^{-1}(D)\in {\mathcal DS}_n(A);$ \\
$(4)$ if $D\in {\mathcal DS}_c(A)$ then $\mu^{-1}(D)\in {\mathcal DS}_c(A);$ \\
$(5)$ if $\mu$ is surjective and $D\in {\mathcal SDS}_{\mu}(A)$ then 
$\mu(D), \mu(\mu(D))\in {\mathcal SDS}_{\mu}(A)$. 
\end{theo}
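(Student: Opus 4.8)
The plan is to verify each of the five items by a direct check against the relevant definition, using only that $\mu$ is a homomorphism with $\mu^{2}=\mu$ (so in particular $\mu(1)=1$) together with Proposition~\ref{sm-bck-40}. For $(1)$: if $\mu$ is injective then $\mu(x)=1=\mu(1)$ gives $x=1$, so $\Ker(\mu)=\{1\}$; conversely $\Ker(\mu)=\{1\}$ forces $\mu=Id_{A}$ by Proposition~\ref{sm-bck-40}$(2)$, which is injective.

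For $(2)$ I would show directly that $\mu^{-1}(D)$ is a deductive system: $\mu(1)=1\in D$ gives $1\in\mu^{-1}(D)$, and if $x,\ x\rightarrow y\in\mu^{-1}(D)$ then $\mu(x)\in D$ and $\mu(x\rightarrow y)=\mu(x)\rightarrow\mu(y)\in D$, whence $\mu(y)\in D$ because $D$ is a deductive system, i.e. $y\in\mu^{-1}(D)$; the inclusion $\Ker(\mu)\subseteq\mu^{-1}(D)$ is immediate since $x\in\Ker(\mu)$ means $\mu(x)=1\in D$. Item $(3)$ then follows from $(2)$ together with the chain of equivalences $x\rightarrow y\in\mu^{-1}(D)\Leftrightarrow\mu(x)\rightarrow\mu(y)\in D\Leftrightarrow\mu(x)\rightsquigarrow\mu(y)\in D\Leftrightarrow x\rightsquigarrow y\in\mu^{-1}(D)$, the middle equivalence being normality of $D$. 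For $(4)$, having $(2)$ it remains to check $(cds_{1})$ and $(cds_{2})$ of Definition~\ref{comm-ds-70} for $\mu^{-1}(D)$: if $y\rightarrow x\in\mu^{-1}(D)$ then $\mu(y)\rightarrow\mu(x)\in D$, and applying commutativity of $D$ with $\mu(x),\mu(y)$ in place of $x,y$ gives $((\mu(x)\rightarrow\mu(y))\rightsquigarrow\mu(y))\rightarrow\mu(x)\in D$; since $\mu$ is a homomorphism this element is exactly $\mu\big(((x\rightarrow y)\rightsquigarrow y)\rightarrow x\big)$, so $((x\rightarrow y)\rightsquigarrow y)\rightarrow x\in\mu^{-1}(D)$, and $(cds_{2})$ is symmetric.

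For $(5)$ the observation is that a surjective idempotent self-map is the identity: for $y\in A$ pick $x$ with $\mu(x)=y$; then $\mu(y)=\mu(\mu(x))=\mu(x)=y$, so $\mu=Id_{A}$. Hence $\mu(D)=D$ and $\mu(\mu(D))=D$, and both lie in ${\mathcal SDS}_{\mu}(A)$ because $D$ does. (Alternatively, $\mu^{2}=\mu$ makes $\mu(\mu(D))=\mu(D)$, so the state condition holds automatically for both sets, and $\mu(D)$ is a deductive system because the image of a deductive system containing the kernel under a surjective homomorphism is again a deductive system — and here $\Ker(\mu)=\{1\}\subseteq D$ by the preceding remark.)

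I do not anticipate a genuine obstacle: every step reduces to the homomorphism property of $\mu$ and a bookkeeping verification against a definition. The only points needing slight attention are $(4)$, where one must apply commutativity of $D$ to the pair $(\mu(x),\mu(y))$ and use the homomorphism identity $\mu\big(((x\rightarrow y)\rightsquigarrow y)\rightarrow x\big)=((\mu(x)\rightarrow\mu(y))\rightsquigarrow\mu(y))\rightarrow\mu(x)$, and $(5)$, where the surjective-plus-idempotent-implies-identity remark is what makes the statement go through.
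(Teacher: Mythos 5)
Your proposal is correct, and items $(2)$--$(4)$ follow essentially the same element-chasing verification as the paper; in $(1)$ your converse direction shortcuts through Proposition~\ref{sm-bck-40}$(2)$ ($\Ker(\mu)=\{1\}$ forces $\mu=Id_A$) where the paper argues directly from $\mu(x\rightarrow y)=\mu(x)\rightarrow\mu(y)=1$ and $(psBCK_6')$, but both are valid. The genuine divergence is in $(5)$. The paper lifts elements of $\mu(D)$ along $\mu$, pulls the computation back into $\mu^{-1}(D)$ using part $(2)$ and the hypothesis $\mu(D)\subseteq D$, and then pushes forward again using idempotence; it separately verifies $\mu(\mu(D))\subseteq\mu(D)$. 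You instead observe that a surjective idempotent self-map is the identity ($\mu(y)=\mu(\mu(x))=\mu(x)=y$ for any preimage $x$ of $y$), so under the stated hypotheses $\mu=Id_A$, $\mu(D)=\mu(\mu(D))=D$, and the conclusion is immediate. Your observation is correct and strictly simpler; it also exposes that the surjectivity hypothesis in $(5)$ collapses the statement to a triviality, which the paper's longer argument obscures. The paper's proof has the minor virtue that its mechanics (lifting, using $\mu^{-1}(D)\in{\mathcal DS}(A)$, and idempotence) would survive under weaker hypotheses than global surjectivity, but as the theorem is actually stated your route is the more economical and more illuminating one.
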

\begin{proof}
$(1)$ Suppose that $\mu$ is injective and let $x\in \Ker(\mu)$. It follows that $\mu(x)=1=\mu(1)$, hence $x=1$. 
Conversely, suppose that $\Ker(\mu)=\{1\}$ and consider $x, y\in A$ such that $\mu(x)=\mu(y)$. 
We get $\mu(x\rightarrow y)=\mu(x)\rightarrow \mu(y)=1$, so $x\rightarrow y\in \Ker(\mu)$. 
It follows that $x\rightarrow y=1$, that is $x\le y$. Similarly we have $y\le x$, so $x=y$. 
Thus $\mu$ is injective. \\
$(2)$ Consider $D\in {\mathcal DS}(A)$. Since $1\in D$, we have $\mu(x)=1\in D$ for all $x\in \Ker(\mu)$, 
hence $\Ker(\mu)\subseteq \mu^{-1}(D)$. 
Let $x, x\rightarrow y\in \mu^{-1}(D)$, that is $\mu(x), \mu(x\rightarrow y)\in D$.  
It follows that $\mu(x), \mu(x)\rightarrow \mu(y)\in D$, so $\mu(y)\in D$. 
Hence $y\in \mu^{-1}(D)$, that is $\mu^{-1}(D)\in {\mathcal DS}(A)$. \\
$(3)$ Suppose $D\in {\mathcal DS}_n(A)$. Then $x\rightarrow y\in \mu^{-1}(D)$ iff $\mu(x\rightarrow y) \in D$ iff 
$\mu(x)\rightarrow \mu(y) \in D$ iff $\mu(x)\rightsquigarrow \mu(y) \in D$ iff $\mu(x\rightsquigarrow y) \in D$ iff 
$x\rightsquigarrow y\in \mu^{-1}(D)$. Hence $\mu^{-1}(D)\in {\mathcal DS}_n(A)$. \\ 
$(4)$ Consider $D\in {\mathcal DS}_c(A)$ and let $x, y\in A$ such that $y\rightarrow x\in \mu^{-1}(D)$, that is 
$\mu(y\rightarrow x)\in D$, so $\mu(y)\rightarrow \mu(x)\in D$. Since $D$ is commutative, it follows that 
$((\mu(x)\rightarrow \mu(y))\rightsquigarrow \mu(y))\rightarrow \mu(x)\in D$, so 
$\mu(((x\rightarrow y)\rightsquigarrow y)\rightarrow x)\in D$. 
Hence $((x\rightarrow y)\rightsquigarrow y)\rightarrow x\in \mu^{-1}(D)$. 
Similarly $y\rightsquigarrow x\in \mu^{-1}(D)$ implies 
$((x\rightsquigarrow y)\rightarrow y)\rightsquigarrow x\in \mu^{-1}(D)$. 
Hence $\mu^{-1}(D)\in {\mathcal DS}_c(A)$. \\
$(5)$ Since $1\in D$, we have $1=\mu(1)\in \mu(D)$. 
Consider $x\in \mu(D)$, $y\in A$ such that $x\rightarrow y\in \mu(D)$. 
There exists $x_1\in D$ such that $\mu(x_1)=x$, and from the surjectivity of $\mu$, there exists $y_1\in A$ 
such that $\mu(y_1)=y$. 
We have $x\rightarrow y\in \mu(D)$ iff $\mu(x_1)\rightarrow \mu(y_1)\in \mu(D)$ iff 
$\mu(x_1\rightarrow y_1)\in \mu(D)\subseteq D$, so $x_1\rightarrow y_1\in \mu^{-1}(D)$. 
Since $\mu^{-1}(D)\in {\mathcal DS}(A)$, we have $y_1\in \mu^{-1}(D)$, that is $y=\mu(y_1)\in \mu(D)$. 
Hence $\mu(D)\in {\mathcal DS}(A)$, so $\mu(D)\in {\mathcal SDS}_{\mu}(A)$. \\
Let $x\in \mu(\mu(D))$, so there exist $x_1\in \mu(D)$ and $x_2\in D$ such that $x=\mu(x_1)$ and $x_1=\mu(x_2)$. 
It follows that $x=\mu(\mu(x_2))=\mu(x_2)=x_1\in \mu(D)$, hence $\mu(\mu(D))\subseteq \mu(D)$. 
Since $\mu(D)\in {\mathcal DS}(A)$, we get $\mu(\mu(D))\in {\mathcal SDS}_{\mu}(A)$.  
\end{proof}

\begin{cor} \label{sm-bck-50-10} If $D\in {\mathcal SDS}_{\mu}(A)$ with $\mu$ surjective then 
$\mu^n(D)\in {\mathcal SDS}_{\mu}(A)$, for all $n\in{\mathbb N}$, $n\ge 2$, where $\mu^n=\mu^{n-1}\circ \mu$. 
\end{cor}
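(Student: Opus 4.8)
The plan is to proceed by induction on $n$, with Theorem \ref{sm-bck-50}(5) furnishing both the base case and the key step. First I would observe that the statement to be proved is: if $D\in{\mathcal SDS}_{\mu}(A)$ and $\mu$ is surjective, then $\mu^n(D)\in{\mathcal SDS}_{\mu}(A)$ for all integers $n\ge 2$.

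For the base case $n=2$, note that $\mu^2(D)=\mu(\mu(D))$, and Theorem \ref{sm-bck-50}(5) states precisely that $\mu(D),\mu(\mu(D))\in{\mathcal SDS}_{\mu}(A)$ whenever $\mu$ is surjective and $D\in{\mathcal SDS}_{\mu}(A)$; so $\mu^2(D)\in{\mathcal SDS}_{\mu}(A)$. For the inductive step, suppose $\mu^n(D)\in{\mathcal SDS}_{\mu}(A)$ for some $n\ge 2$. Since $\mu$ is surjective and $\mu^n(D)$ is itself an element of ${\mathcal SDS}_{\mu}(A)$, I would apply Theorem \ref{sm-bck-50}(5) with $\mu^n(D)$ in place of $D$: this yields $\mu(\mu^n(D))\in{\mathcal SDS}_{\mu}(A)$. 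By the definition $\mu^{n+1}=\mu\circ\mu^n$ (equivalently $\mu^{n+1}=\mu^n\circ\mu$, as the paper writes), we have $\mu^{n+1}(D)=\mu(\mu^n(D))$, so $\mu^{n+1}(D)\in{\mathcal SDS}_{\mu}(A)$, completing the induction.

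There is essentially no obstacle here: the entire content is packaged in Theorem \ref{sm-bck-50}(5), and the only thing to check is that its hypotheses persist along the induction, which they do because $\mu$ is fixed (hence still surjective) and the class ${\mathcal SDS}_{\mu}(A)$ is, by the theorem, closed under the operation $E\mapsto\mu(E)$ for surjective $\mu$. The one minor point worth a sentence is to confirm that $\mu^n(D)$ is indeed a deductive system and not merely a subset, so that "$\mu^n(D)\in{\mathcal SDS}_{\mu}(A)$" is meaningful as an induction hypothesis — but this is exactly part of the conclusion of Theorem \ref{sm-bck-50}(5) (which asserts membership in ${\mathcal SDS}_{\mu}(A)\subseteq{\mathcal DS}(A)$), so it is carried along for free. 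I would therefore present the argument as a short two-line induction citing Theorem \ref{sm-bck-50}(5).
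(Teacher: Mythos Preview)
Your induction argument is correct and is precisely the approach the paper intends: the corollary is stated without proof as an immediate consequence of Theorem~\ref{sm-bck-50}(5), and your inductive application of that theorem is the natural way to unpack it.

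One remark that would let you shorten the argument further: since $\mu$ is a state-morphism operator, by definition $\mu^2=\mu$, and hence $\mu^n=\mu$ for every $n\ge 1$. Consequently $\mu^n(D)=\mu(D)$ for all $n\ge 1$, and the conclusion follows in one line from the first assertion of Theorem~\ref{sm-bck-50}(5) (that $\mu(D)\in{\mathcal SDS}_{\mu}(A)$), with no induction needed. Your proof is not wrong, but it does slightly more work than necessary because it does not exploit the idempotence of $\mu$ directly.
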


\begin{prop} \label{sm-bck-60} Let $(A, \mu)$ be a state-morphism pseudo BCK-algebra and the map 
$\hat \mu:A/\Ker(\mu) \to A/\Ker(\mu)$, defined by $\hat \mu(x/\Ker(\mu))= \mu(x)/\Ker(\mu)$. 
Then $(A/\Ker(\mu), \hat \mu)$ is a state-morphism pseudo BCK-algebra.  
\end{prop}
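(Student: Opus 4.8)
The plan is to verify that $\hat\mu$ satisfies the three requirements of Definition \ref{sm-bck-10}: it is a well-defined map on the quotient, it is a pseudo BCK-homomorphism, and it is idempotent. By Lemma \ref{sm-bck-30-20} we have $\Ker(\mu)\in {\mathcal DS}_n(A)$, so $A/\Ker(\mu)=(A/\Theta_{\Ker(\mu)},\rightarrow,\rightsquigarrow,1/\Ker(\mu))$ is a pseudo BCK-algebra and it suffices to analyze $\hat\mu$.

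First I would check that $\hat\mu$ is well defined. Suppose $x/\Ker(\mu)=y/\Ker(\mu)$, that is $x\rightarrow y\in \Ker(\mu)$ and $y\rightarrow x\in \Ker(\mu)$. Then $\mu(x\rightarrow y)=\mu(y\rightarrow x)=1$, and since $\mu$ is a homomorphism this yields $\mu(x)\rightarrow \mu(y)=\mu(y)\rightarrow \mu(x)=1$, hence $\mu(x)\le \mu(y)$ and $\mu(y)\le \mu(x)$, so $\mu(x)=\mu(y)$ by $(psBCK_6')$. Therefore $\mu(x)/\Ker(\mu)=\mu(y)/\Ker(\mu)$, i.e. $\hat\mu(x/\Ker(\mu))=\hat\mu(y/\Ker(\mu))$.

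Next, using that $\mu$ is a homomorphism and that the operations on $A/\Ker(\mu)$ are those induced by the canonical projection $\pi_{\Ker(\mu)}$, for all $x,y\in A$ one computes
\[
\hat\mu\big((x/\Ker(\mu))\rightarrow(y/\Ker(\mu))\big)=\mu(x\rightarrow y)/\Ker(\mu)=(\mu(x)\rightarrow\mu(y))/\Ker(\mu)=\hat\mu(x/\Ker(\mu))\rightarrow\hat\mu(y/\Ker(\mu)),
\]
and symmetrically for $\rightsquigarrow$; moreover $\hat\mu(1/\Ker(\mu))=\mu(1)/\Ker(\mu)=1/\Ker(\mu)$ by Proposition \ref{st-psBCK-20}$(1)$. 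Thus $\hat\mu$ is a homomorphism. Finally, since $\mu^2=\mu$, for every $x\in A$ we get $\hat\mu^2(x/\Ker(\mu))=\hat\mu(\mu(x)/\Ker(\mu))=\mu(\mu(x))/\Ker(\mu)=\mu(x)/\Ker(\mu)=\hat\mu(x/\Ker(\mu))$, so $\hat\mu^2=\hat\mu$. Hence $(A/\Ker(\mu),\hat\mu)$ is a state-morphism pseudo BCK-algebra.

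I do not expect any genuine difficulty here: the only step needing a moment's thought is well-definedness, and it reduces immediately to the observation that the congruence $\Theta_{\Ker(\mu)}$ is respected, because $x\,\Theta_{\Ker(\mu)}\,y$ is equivalent to $\mu(x)\rightarrow\mu(y)=\mu(y)\rightarrow\mu(x)=1$ once the homomorphism property of $\mu$ is applied. The remaining verifications are a routine transport of the identities defining a state-morphism operator through $\pi_{\Ker(\mu)}$.
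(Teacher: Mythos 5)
Your proof is correct and follows essentially the same route as the paper: the well-definedness argument (passing from $x\rightarrow y,\ y\rightarrow x\in\Ker(\mu)$ to $\mu(x)=\mu(y)$ via the homomorphism property) is identical, and the remaining verifications you spell out are exactly the ones the paper dismisses as straightforward. Nothing is missing.
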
 
\begin{proof}
If $x/\Ker(\mu)=y/\Ker(\mu)$ then $x\rightarrow y, y\rightarrow x\in \Ker(\mu)$, that is 
$\mu(x\rightarrow y)=\mu(y\rightarrow x)=1$. 
It follows that $\mu(x)\rightarrow \mu(y)=\mu(y)\rightarrow \mu(x)=1$, so $\mu(x)=\mu(y)$.
Hence $\hat \mu$ is well defined. 
The proof of the fact that $\hat \mu$ is a state-morphism on $A/\Ker(\mu)$ is straightforward. 
\end{proof}

\begin{theo} \label{sm-bck-70} Let $(A, \mu)$ be a state-morphism pseudo BCK-algebra and $\pi=\pi_{\Ker(\mu)}$ 
be the canonical projection from $A$ to $A/\Ker(\mu)$. Then $\pi\circ \mu=\pi$. 
\end{theo}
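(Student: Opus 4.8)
The plan is to unwind what the equality $\pi\circ\mu=\pi$ says pointwise. Fix $x\in A$. Since $\pi=\pi_{\Ker(\mu)}$ sends an element to its class, $\pi(\mu(x))=\pi(x)$ is equivalent to $\mu(x)/\Ker(\mu)=x/\Ker(\mu)$, i.e. to $(x,\mu(x))\in\Theta_{\Ker(\mu)}$. By Lemma \ref{sm-bck-30-20}, $\Ker(\mu)$ is a normal deductive system, so $\Theta_{\Ker(\mu)}$ is the congruence described in the Preliminaries: $(u,v)\in\Theta_{\Ker(\mu)}$ iff $u\rightarrow v\in\Ker(\mu)$ and $v\rightarrow u\in\Ker(\mu)$. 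Thus it suffices to check $x\rightarrow\mu(x)\in\Ker(\mu)$ and $\mu(x)\rightarrow x\in\Ker(\mu)$.

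First I would verify these two memberships by a direct computation using that $\mu$ is a homomorphism and $\mu^2=\mu$. Namely,
\[
\mu(\mu(x)\rightarrow x)=\mu(\mu(x))\rightarrow\mu(x)=\mu(x)\rightarrow\mu(x)=1,
\]
so $\mu(x)\rightarrow x\in\Ker(\mu)$; and likewise
\[
\mu(x\rightarrow\mu(x))=\mu(x)\rightarrow\mu(\mu(x))=\mu(x)\rightarrow\mu(x)=1,
\]
so $x\rightarrow\mu(x)\in\Ker(\mu)$. (Alternatively, one may simply quote Proposition \ref{sm-bck-40}$(1)$, which already records $\Ker(\mu)=\{\mu(x)\rightarrow x\mid x\in A\}=\{x\rightarrow\mu(x)\mid x\in A\}$.)

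Combining the two memberships gives $(x,\mu(x))\in\Theta_{\Ker(\mu)}$, hence $\mu(x)/\Ker(\mu)=x/\Ker(\mu)$, i.e. $\pi(\mu(x))=\pi(x)$. Since $x$ was arbitrary, $\pi\circ\mu=\pi$.

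I do not expect a genuine obstacle here: the statement is essentially a reformulation of Proposition \ref{sm-bck-40}$(1)$ together with the normality of $\Ker(\mu)$ (Lemma \ref{sm-bck-30-20}), which is what lets us pass from the two one-sided conditions to equality of congruence classes. The only point requiring a word of care is making explicit that the relevant congruence $\Theta_{\Ker(\mu)}$ is the two-sided one attached to the normal deductive system $\Ker(\mu)$, so that $x\rightarrow\mu(x)\in\Ker(\mu)$ and $\mu(x)\rightarrow x\in\Ker(\mu)$ together suffice.
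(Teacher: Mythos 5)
Your proof is correct and follows essentially the same route as the paper: the paper likewise reduces the claim to showing $(x,\mu(x))\in\Theta_{\Ker(\mu)}$, citing Proposition \ref{sm-bck-40}$(1)$ for the memberships $\mu(x)\rightarrow x,\ x\rightarrow\mu(x)\in\Ker(\mu)$, which you have simply verified by the same direct computation with $\mu^2=\mu$ and the homomorphism property. Your explicit appeal to Lemma \ref{sm-bck-30-20} for the normality of $\Ker(\mu)$ is a small but welcome clarification that the paper leaves implicit.
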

\begin{proof}
By Proposition \ref{sm-bck-40}$(1)$ for all $x\in A$, $(x,\mu(x))\in \Theta_{\Ker(\mu)}$, thus we have $\mu(x)/\Ker(\mu)=x/\Ker(\mu)$, so $\pi(\mu(x))=\mu(x)/\Ker(\mu)=x/\Ker(\mu)=\pi(x)$. 
Hence $\pi\circ \mu=\pi$.
\end{proof}

$\vspace*{5mm}$

\section{Concluding remarks}

In this paper we generalized the equational bases given by M. Pa\l asi\'nski and B. Wo\'zniakowska in \cite{Pal1} 
and by W.H. Cornish in \cite{Cor1} for commutative BCK-algebras to the case of commutative pseudo BCK-algebras. 
We proved certain results regarding the commutative deductive systems of pseudo BCK-algebras and we 
gave a characterization of commutative pseudo BCK-algebras in terms of commutative deductive systems. 
We applied these results to investigate the state pseudo BCK-algebras and the state-morphism pseudo BCK-algebras. 
As another direction of research, one could investigate the \emph{$n$-fold commutative pseudo BCK-algebras} and 
the \emph{$n$-fold commutative deductive systems} of pseudo BCK-algebras. 
We recall that a BCK-algebra $(A,*,0)$ is called $n$-fold commutative if there exists a fixed natural number $n$ 
such that the identity $x* y=x*(y*(y*x^n))$, where $y*x^0=y$, $y*x^{n+1}=(y*x^n)*x$, holds for all 
$x, y\in A$ (see \cite{Xie1}). 
An ideal $I$ of a BCK-algebra $(A,*,0)$ is called $n$-fold commutative if there exists a fixed natural number $n$ 
such that $x*y\in I$ implies $x* y=x*(y*(y*x^n))\in I$ for all $x, y\in A$ (see \cite{Huang1}). 
State pseudo BCK-algebras and state-morphism pseudo BCK-algebras could be studied in the context of 
$n$-fold commutative pseudo BCK-algebras and $n$-fold commutative deductive systems. 
These results could be extended to more general structures, such as pseudo BE-algebras and pseudo BCI-algebras.




$\vspace*{5mm}$

\setlength{\parindent}{0pt}


\begin{thebibliography}{99}

\bibitem{Bor2} R.A. Borzooei, A. Borumand Saeid, A. Rezaei, A. Radfar, R. Ameri, \emph{On pseudo BE-algebras}, 
Discuss. Math., Gen. Algebra Appl. {\bf 33}(2013), 95--108.

\bibitem{Bor1} R.A. Borzooei, A. Dvure\v censkij, O. Zahiri, \emph{State BCK-algebras and state-morphism BCK-algebras}, 
Fuzzy Sets Syst. {\bf 244}(2014), 86--105.

\bibitem{Ciu6} L.C. Ciungu, A. Dvure\v{c}enskij, \emph{Measures, states and de Finetti maps on pseudo BCK-algebras}, Fuzzy Sets Syst. {\bf 160}(2010), 1099--1113.

\bibitem{Ciu3} L.C. Ciungu, A. Dvure\v{c}enskij, M. Hy\v{c}ko, \emph{State BL-algebras}, Soft Comput. {\bf 15}(2011), 619--634.

\bibitem{Ciu4} L.C. Ciungu, \emph{Bounded pseudo-hoops with internal states}, Math. Slovaca {\bf 63}(2013), 903--934.

\bibitem{Ciu2} L.C. Ciungu, \emph{Non-commutative Multiple-Valued Logic Algebras}, Springer, New York, 2014.

\bibitem{Ciu30} L.C. Ciungu, \emph{On state pseudo-hoops}, Math. Slovaca, to appear. 

\bibitem{Ciu31} L.C. Ciungu, \emph{Commutative pseudo BE-algebras}, Iran. J. Fuzzy Syst. 13(1)(2016), 131--144. 

\bibitem{Con1} N.M. Constantinescu, \emph{On pseudo BL-algebras with internal states}, Soft Comput. {\bf 16}(2012), 1915--1922.

\bibitem{Cor1} W.H. Cornish, \emph{A minimal equational base for commutative BCK-algebras}, Math. Seminar Notes 
{\bf 10}(1982), 251--256.

\bibitem{DiDv1} A. Di Nola, A. Dvure\v censkij, \emph{State-morphism MV-algebras}, Ann. Pure Appl. Logic
{\bf 161}(2009), 161--173.

\bibitem{DiDv1-e} A. Di Nola, A. Dvure\v censkij, A. Lettieri, \emph{Erratum to the paper: ``State-morphism MV-algebras"[Ann. Pure Appl. Logic 161(2009), 161-173]}, Ann. Pure Appl. Logic {\bf 161}(2010), 1605--1607. 

\bibitem{Dud1} W. Dudek, Y.B. Jun, \emph{Pseudo BCI-algebras}, East Asian Math. J. {\bf 24}(2008), 187--190.

\bibitem{DvPu} A. Dvure\v censkij, S. Pulmannova, \emph{New trends in Quantum Structures}, Kluwer Academic Publishers, Dordrecht, Ister Science, Bratislava, 2000.

\bibitem{Dvu5} A. Dvure\v censkij, \emph{Subdirectly irreducible state-morphism BL-algebras}, Arch. Math. Logic
{\bf 50}(2011), 145--160.

\bibitem{DvRa4} A. Dvure\v censkij, J. Rach{\accent23u}nek, D. $\rm \check{S}alounov\acute{a}$, \emph{State operators on generalizations of fuzzy structures}, Fuzzy Sets Syst. {\bf 187}(2012), 58--76.

\bibitem{DvRa5} A. Dvure\v censkij, J. Rach{\accent23u}nek, D. $\rm \check{S}alounov\acute{a}$, 
\emph{Erratum to ``State operators on generalizations of fuzzy structures" [Fuzzy Sets Syst. 187(2012), 58--76]}, 
Fuzzy Sets Syst. {\bf 194}(2012), 97--99.

\bibitem{Fla1} T. Flaminio, F. Montagna, \emph{MV-algebras with internal states and probabilistic fuzzy logics}, Inter. J. Approx. Reasoning \textbf{50}(2009), 138--152. 

\bibitem{Geo15} G. Georgescu, A. Iorgulescu, \emph{Pseudo BCK-algebras: An extension of BCK-algebras}, Proceedings of DMTCS'01: Combinatorics, Computability and Logic, Springer, London, 2001, pp. 97--114.

\bibitem{Geo16} G. Georgescu, L. Leu\c stean, V. Preoteasa, \emph{Pseudo-hoops}, J. Mult.-Val. Log. Soft Comput. 
{\bf 11} (2005), 153--184. 

\bibitem{Huang1} Y.S. Huang, \emph{BCI-algebra}, Science Press, China, 2006.

\bibitem{Imai} Y. Imai, K. Is$\rm\acute{e}$ki, \emph{On axiom systems of propositional calculi. XIV}, Proc. Japan Acad. {\bf 42}(1966), 19--22.

\bibitem{Ior14} A. Iorgulescu, \emph{Algebras of logic as BCK-algebras}, ASE Ed., Bucharest, 2008.

\bibitem{Ise1} K. Is\`eki, \emph{An algebra related with a propositional calculus}, Proc. Japan Acad. {\bf 42} (1966), 26--29.

\bibitem{Kuhr6} J. $\rm K \ddot{u}hr$, \emph{Pseudo BCK-algebras and related structures}, Habilitation thesis, 
$\rm Palack\acute{y}$ University in Olomouc, 2007. 

\bibitem{Kuhr2} J. K{\"u}hr, \emph{Commutative pseudo BCK-algebras}, Southeast Asian Bull. Math. {\bf 33}(2009), 451--475.

\bibitem{Lu1} Y.F. Lu, X.H. Zhang, \emph{Commutative pseudo BCI-algebras and commutative pseudo BCI-filters},
In: Proceedings of 2010 3rd International Symposium on Knowledge Acquisition and Modeling
(KAM), 2010, Wuhan, China, pp. 76--79. 

\bibitem{Meng3} J. Meng, \emph{An ideal characterization of commutative BCI-algebras}, Pusan Kyongnam Math. 
{\bf 1}(1993), 1--6.

\bibitem{Meng2} J. Meng, \emph{On ideals in BCK-algebras}, Math. Japon. {\bf 40}(1994), 143--154.

\bibitem{Meng1} J. Meng, Y.B. Jun, \emph{BCK-algebras}, Kyung Moon Sa Co., Seoul, 1994.

\bibitem{Mun1} D. Mundici, \emph{Averaging the truth-value in \L ukasiewicz sentential logic}, Studia Logica 
{\bf 55}(1995), 113--127. 

\bibitem{Pal1} M. Pa\l asi\'nski, B. Wo\'zniakowska, \emph{An equational basis for commutative BCK-algebras}, 
Bull. Sect. Logic {\bf 10}(1981), 108--111. 

\bibitem{Rac1} J. Rach{\accent23u}nek, D. $\rm \check{S}alounov\acute{a}$, \emph{State operators on GMV-algebras}, Soft Comput. {\bf 15}(2011), 327--334.

\bibitem{Rez2} A. Rezaei, A. Borumand Saeid, \emph{Commutative ideals in BE-algebras}, Kyungpook Math. J. 
{\bf 52}(2012), 483--494.

\bibitem{Tan1} S. Tanaka, \emph{A new class of algebras}, Math. Seminar Notes {\bf 3}(1975), 37--43. 

\bibitem{Wal1} A. Walendziak, \emph{On axiom systems of pseudo BCK-algebras}, Bull. Malays. Math. Sci. Soc. 
{\bf 34}(2011), 287--293. 

\bibitem{Xie1} H. Xie, Y.S. Huang, \emph{$n$-fold commutative BCK-algebras}, Sci. Math. {\bf 2}(1998), 195--202. 

\bibitem{Yut1} H. Yutani, \emph{The class of commutative BCK-algebras is equationally definable}, 
Math. Seminar Notes {\bf 5}(1977), 207--210. 

\bibitem{Yut2} H. Yutani, \emph{On a system of axioms of a commutative BCK-algebra}, 
Math. Seminar Notes {\bf 5}(1977), 255--256. 

\bibitem{Zhang1} X. Zhang, \emph{Fuzzy commutative filters and fuzzy closed filters in pseudo BCI-algebras}, 
Journal of Computational Information Systems {\bf 10}(2014), 3577--3584. 


\end{thebibliography}
\end{document}